\numberwithin{equation}{section}
\theoremstyle{plain}
\newtheorem{theorem}[equation]{Theorem}
\newtheorem{proposition}[equation]{Proposition}
\newtheorem{lemma}[equation]{Lemma}
\newtheorem{cor}[equation]{Corollary}
\newtheorem{conj}[equation]{Conjecture}
\newcounter{intro}
\newtheorem{introtheorem}[intro]{Theorem}
\newtheorem{introcor}[intro]{Corollary}
\theoremstyle{definition}
\newtheorem{definition}[equation]{Definition}
\newtheorem{remark}[equation]{Remark}
\DeclareMathOperator{\Spec}{Spec}
\DeclareMathOperator{\Pic}{Pic}
\DeclareMathOperator{\NS}{NS}
\DeclareMathOperator{\pr}{pr}
\DeclareMathOperator{\Jac}{Jac}
\DeclareMathOperator{\Prep}{Prep}
\DeclareMathOperator{\Alb}{Alb}
\DeclareMathOperator{\NT}{NT}
\DeclareMathOperator{\an}{an}
\DeclareMathOperator{\Div}{Div}
\DeclareMathOperator{\Prin}{Pr}
\DeclareMathOperator{\Mod}{mod}
\DeclareMathOperator{\Int}{int}
\DeclareMathOperator{\Cont}{cont}
\DeclareMathOperator{\Nef}{nef}
\DeclareMathOperator{\Vrt}{vert}
\DeclareMathOperator{\Red}{red}
\DeclareMathOperator{\Id}{id}
\DeclareMathOperator{\Nt}{NT}
\DeclareMathOperator{\Tor}{tor}
\DeclareMathOperator{\Sup}{sup}
\DeclareMathOperator{\tr}{tr}
\DeclareMathOperator{\Ber}{Ber}
\renewcommand{\Im}{\operatorname{Im}}
\renewcommand{\div}{\text{div}}
\newcommand{\Tr}{\operatorname{Tr}}
\newcommand{\isom}{\xrightarrow{~\sim~}}
\newcommand{\hooklongrightarrow}{\lhook\joinrel\longrightarrow}
\newcommand{\trP}[1]{\operatorname{Pic}_{\tr}(#1)}
\newcommand{\trPo}[1]{\operatorname{Pic}^0_{\tr}(#1)}
\newcommand{\C}{\mathbb{C}}
\newcommand{\R}{\mathbb{R}}
\newcommand{\Z}{\mathbb{Z}} 
\newcommand{\K}{\mathbb{K}}
\newcommand{\Q}{\mathbb{Q}}
\newcommand{\F}{\mathbb{F}}
\newcommand{\ocA}{\overline{\mathcal{A}}}
\newcommand{\cB}{\mathcal{B}}
\newcommand{\cC}{\mathcal{C}}
\newcommand{\cD}{\mathcal{D}}
\newcommand{\ocD}{\overline{\mathcal{D}}}
\newcommand{\cE}{\mathcal{E}}
\newcommand{\cH}{\mathcal{H}}
\newcommand{\ocH}{\overline{\mathcal{H}}}
\newcommand{\cO}{\mathcal{O}}
\newcommand{\cX}{\mathcal{X}}
\newcommand{\cU}{\mathcal{U}}
\newcommand{\cV}{\mathcal{V}}
\newcommand{\cW}{\mathcal{W}}
\newcommand{\cY}{\mathcal{Y}}
\newcommand{\cM}{\mathcal{M}}
\newcommand{\oM}{\overline{M}}
\newcommand{\ocM}{\overline{\mathcal{M}}}
\newcommand{\oN}{\overline{N}}
\newcommand{\calL}{\mathcal{L}}
\newcommand{\oL}{\overline L}
\newcommand{\ocL}{\overline{\mathcal{L}}}
\newcommand{\oH}{\overline H}
\newcommand{\oP}{\overline{P}}
\renewcommand{\P}{\mathbb{P}}
\begin{document}

\title{Heights and arithmetic dynamics over finitely generated fields}

\author{Alexander Carney}

\date{\today}

\maketitle
\begin{abstract}
We develop a theory of vector-valued heights and intersections defined relative to finitely generated extensions $K/k$. These generalize both number field and geometric heights. When $k$ is $\Q$ or $\F_p$, or when a non-isotriviality condition holds, we obtain Northcott-type results. We then prove a version of the Hodge Index Theorem for vector-valued intersections, and use it to prove a rigidity theorem for polarized dynamical systems over any field. 
\end{abstract}

\section{Introduction}\label{introsec}

Arithmetic intersection theory, originally created by Arakelov and developed further by Gillet, Soul\'e, Faltings, Deligne, Szpiro, Zhang, and others, and is often most useful for its ability to produce height functions. Many well-known results in number theory, such as the Mordell-Weil Theorem~\cite{SilvermanECs}, Faltings' Theorem~\cite{faltingsmordell} (as well as Parshin's proof of the Mordell Conjecture over function fields~\cite{Parshin}), and the Manin-Mumford and Bogomolov Conjectures~\cite{raynaudthm1,ullmobogomolov,zhangbogomolov} are obtained using finiteness theorems for their respective height functions.

Such finiteness theorems, or Northcott properties, also provide a connection between arithmetic geometry and dynamics. Let $X$ be a projective variety, and let $f:X\to X$ be an endomorphism. $(X,f)$ is called a \emph{polarizable dynamical system} provided there exists an ample line bundle $L\in\Pic(X)$ such that $f^*L=L^{\otimes q}$ for some $q>1$. The set of preperiodic points of $f$ is the set
\[
\Prep(f):=\left\{x\in X(\overline\K)\big|f^n(x)=f^m(x)\text{ for some }n>m>0\right\}.
\]

$\Prep(f)$ is contained in the set of $f$-canonical height zero points, and the two sets are equal provided Northcott holds~\cite{silvermanbook}. By constructing heights with the Northcott property over all finitely generated fields, we prove the following.

\begin{introtheorem}\label{dynamicstheorem}
Let $X$ be a projective variety over any field $\K$, and let $f$ and $g$ be two polarizable algebraic dynamical systems on $X$. The following four statements are equivalent:
\begin{enumerate}
\item 
$g(\Prep(f))\subset\Prep(f)$.
\item
$\Prep(f)\subset\Prep(g)$.
\item
$\Prep(f)\cap\Prep(g)$ is Zariski dense in $X$.
\item
$\Prep(f)=\Prep(g)$.
\end{enumerate}
\end{introtheorem}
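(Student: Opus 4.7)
The plan is to establish the cyclic implications $(4) \Rightarrow (1) \Rightarrow (2) \Rightarrow (3) \Rightarrow (4)$, with the final step being the only substantial one. The first step would be to reduce to the case when $\K$ is finitely generated over its prime field, since $X$, $f$, $g$, and the polarizing bundle are all of finite type. To each system I would then attach a canonical vector-valued height $\hat h_f$, $\hat h_g$ constructed by the Tate limit together with the paper's vector-valued intersection theory, so that under the Northcott property proven in the paper, preperiodic points for each map are precisely the zero set of the corresponding height.

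The implication $(4) \Rightarrow (1),(2),(3)$ is immediate. For $(1) \Rightarrow (2)$, fix $x \in \Prep(f)$. Hypothesis (1) forces the entire forward $g$-orbit of $x$ into $\Prep(f)$. All these points are defined over the field generated by $x$, which is still finitely generated over the prime field, and all have $\hat h_f = 0$. The Northcott-type result of the paper (applied vector-valued, after descending to the trace in the isotrivial case if necessary) forces this orbit to be finite, so $x \in \Prep(g)$. For $(2) \Rightarrow (3)$, Fakhruddin's result that the preperiodic points of any polarizable system are Zariski dense gives $\Prep(f) \cap \Prep(g) = \Prep(f)$, which is dense.

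The main step is $(3) \Rightarrow (4)$, where the vector-valued Hodge Index theorem enters. Let $\overline L_f$ and $\overline L_g$ be the admissible metrized (adelic) line bundles on $X$ obtained by Tate limits from the relations $f^\ast L = L^{\otimes q_f}$ and $g^\ast M = M^{\otimes q_g}$, so that their top self-intersections vanish and they compute $\hat h_f$ and $\hat h_g$. After normalizing degrees, the density hypothesis (3) should be used to show that the mixed vector-valued intersections of $\overline L_f$ and $\overline L_g$ achieve the boundary case of the Hodge Index inequality. The equality clause should then force the normalized difference of $\overline L_f$ and $\overline L_g$ to be numerically trivial in the relevant quotient, so that $\hat h_f$ and $\hat h_g$ have the same zero locus, giving $\Prep(f) = \Prep(g)$.

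The principal obstacle is the last step: making the equality case of the vector-valued Hodge Index usable. Unlike the scalar Yuan--Zhang setting, the target of the intersection pairing is now higher-dimensional, so characterizing when a self-intersection vanishes requires care, and passing from vanishing of a vector-valued self-intersection to equality of height functions on a Zariski dense set is the technical heart of the proof. A secondary, but unavoidable, issue is the case where $\K$ is neither $\Q$ nor $\F_p$ and isotriviality can obstruct the naive Northcott statement; here I expect to first descend the dynamics to the locus over which they are constant, apply the theorem there, and lift the conclusion back — this descent must be compatible with both the vector-valued heights and the Hodge Index argument used above.
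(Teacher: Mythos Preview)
Your outline matches the paper's proof almost exactly: the same cyclic chain $(4)\Rightarrow(1)\Rightarrow(2)\Rightarrow(3)\Rightarrow(4)$, the Lefschetz reduction to a finitely generated field, Northcott for $(1)\Rightarrow(2)$, Fakhruddin for $(2)\Rightarrow(3)$, and the Hodge index theorem for $(3)\Rightarrow(4)$.

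One point you are overcomplicating: the isotriviality issue you flag as ``secondary but unavoidable'' is in fact entirely avoidable, and the paper does avoid it. After the Lefschetz reduction you have $K$ finitely generated over its \emph{prime} field $k\in\{\Q,\F_p\}$, and the paper's Northcott theorem (Theorem~\ref{intronorthcott}) holds unconditionally when $k=\Q$ or $k$ is finite --- the total non-isotriviality hypothesis is only needed for other choices of $k$. So there is no descent argument to run; you simply apply Corollaries~\ref{heightprepcorZ} and~\ref{heightprepcork} directly, and $\Prep(f)=\{h_f\equiv 0\}$ on the nose.

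For $(3)\Rightarrow(4)$ your sketch is right in spirit but leaves out the bridge between ``dense set of height-zero points'' and ``vanishing top self-intersection.'' The paper handles this via the essential minimum (Theorem~\ref{essentialminimum}) together with Lemma~\ref{moriwakinumericallytrivial}: one takes the same ample class $\xi\in\NS(X)_\Q$, forms $\overline N=\widehat\ell_f(\xi)+\widehat\ell_g(\xi)$, uses density of $Z_f\cap Z_g$ to force $\lambda_1^{\overline H}(X,\overline N)=0$ for every $\overline H$ satisfying the Moriwaki condition, and then concludes $\overline N^{n+1}\equiv 0$. Expanding this and extracting $(\widehat\ell_f(\xi)-\widehat\ell_g(\xi))^2\cdot(\widehat\ell_f(\xi)+\widehat\ell_g(\xi))^{n-1}\equiv 0$ is what sets up the Hodge index application. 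This is the step you describe only as ``the density hypothesis should be used to show\ldots''; it is where the essential minimum machinery of Section~\ref{essentialminimumsec} is actually used.
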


This is proven by Yuan-Zhang~\cite{yz,yz2} when $\K$ has characteristic zero and by the author~\cite{carney} when $\K$ is a transcendence degree one function field. We additionally prove a corollary for local fields.

\begin{introcor}\label{introequilibrium}
Suppose $\K$ is algebraically closed and complete with respect to some absolute value (for example $\C$ or $\C_p$ in characteristic zero). Let $f,g:X\to X$ be two polarizable dynamical systems over $\K$. If $\Prep(f)\cap\Prep(g)$ is Zariski dense in $X$, we have an equality of equilibrium measures, $d\mu_f=d\mu_g$. 
\end{introcor}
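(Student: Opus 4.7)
The plan is to combine Theorem~\ref{dynamicstheorem} with an arithmetic equidistribution theorem at the place of $\K_0$ induced by $\K$. Since $\Prep(f)\cap\Prep(g)$ is Zariski dense, Theorem~\ref{dynamicstheorem} already upgrades the hypothesis to $\Prep(f)=\Prep(g)$, so both equilibrium measures will have to be built from the same underlying set of height-zero points.

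Next I would descend everything to a finitely generated subfield. Fix a finitely generated $\K_0\subset\K$ over which $X$, $f$, $g$, and both polarizations $L_f, L_g$ are defined, together with an extension of the absolute value on $\K$ to an algebraic closure $\overline{\K_0}$ compatible with the chosen embedding $\overline{\K_0}\hookrightarrow\K$. Since preperiodic points of a polarizable dynamical system are Zariski dense in $X$ and are defined over $\overline{\K_0}$, one can extract a generic sequence $\{x_n\}\subset\Prep(f)=\Prep(g)$ with $x_n\in X(\overline{\K_0})$; by construction $\widehat h_f(x_n)=\widehat h_g(x_n)=0$.

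The final step is to invoke arithmetic equidistribution at the place $v$ of $\K_0$ coming from the embedding into $\K$: the probability measures supported on the Galois conjugates of $x_n$, viewed in the Berkovich (or complex analytic) space $X_v^{\mathrm{an}}$, should converge weakly to the Chambert--Loir equilibrium measure attached to any canonical metric on the polarization. Applying this once with $(f,L_f)$ and once with $(g,L_g)$ along the single common sequence $\{x_n\}$ forces the common weak limit to equal both $d\mu_f$ and $d\mu_g$, giving the desired equality.

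The main obstacle is establishing an equidistribution theorem in precisely this generality, since the points live over a finitely generated base $\K_0$ rather than a number field. In the classical number field case this is Yuan's theorem, whose key ingredient is the arithmetic Hodge index inequality together with a variational argument. The vector-valued Hodge index theorem proved earlier in this paper, combined with the Northcott property at the chosen place $v$, should supply the analogous differentiation argument and yield the equidistribution needed to close out the corollary. Once equidistribution at $v$ is in hand, the deduction of $d\mu_f=d\mu_g$ is immediate.
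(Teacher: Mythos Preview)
Your approach has a genuine gap: it relies on an arithmetic equidistribution theorem for small points over a finitely generated base field $\K_0$, at the single place determined by the embedding $\K_0\hookrightarrow\K$. Such a theorem is not proven in this paper, and your sketch of how to obtain it (``the vector-valued Hodge index theorem \ldots\ should supply the analogous differentiation argument'') is speculative. Yuan's equidistribution argument requires a volume expansion and a bigness criterion that are considerably more delicate than the Hodge index inequality alone, and extending them to the present setting would be a nontrivial project in its own right.

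The paper avoids equidistribution entirely by extracting a stronger conclusion from the proof of Theorem~\ref{dynamicstheorem}. That proof goes through Theorem~\ref{heightsdynamictheorem}, which yields not merely $\Prep(f)=\Prep(g)$ but the equality of admissible sections $\widehat\ell_f\equiv\widehat\ell_g:\NS(X)_{\Q}\to\widehat\Pic_{\tr}(X)_{\Int}$. In other words, the $f$-canonical and $g$-canonical adelic metrics on any ample class agree up to a constant and the trace. Now the absolute value on $\K$ restricts to a point $\eta^{\an}\in(\Spec K)^{\an}$, and the fiber of $X^{\an}$ over this point is exactly the Berkovich space $X^{\Ber}$ over $\K$. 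Restricting the metric equality $\widehat\ell_f\equiv\widehat\ell_g$ to this fiber gives equality of the local canonical metrics on $X^{\Ber}$. Since $d\mu_f$ is precisely the Chambert--Loir measure
\[
d\mu_f=\frac{1}{L_1\cdots L_n}\,\widehat c_1(\oL_{1,f})\wedge\cdots\wedge\widehat c_1(\oL_{n,f})
\]
built from these local metrics, equality of metrics gives $d\mu_f=d\mu_g$ directly. The point you are missing is that the equilibrium measure is a \emph{function of the canonical metric}, so once the Hodge index theorem forces the metrics to coincide, no limiting procedure along Galois orbits is needed.
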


Moriwaki~\cite{moriwakifg1,moriwakifg2} develops a theory of heights and proves a Northcott property over finitely generated extensions $K$ of $\Q$ with transcendence degree $d$ by first fixing a polarization: a normal, projective arithmetic model $\cB\to\Spec\Z$ with function field $K$ and a collection of nef $C^{\infty}$-Hermitian line bundles $\ocH_1,\dots,\ocH_d$ on $\cB$. Since every $\C$-variety can be defined over a field finitely generated over $\Q$, this allows him to recover the Manin-Mumford conjecture (Raynaud's Theorem, ~\cite{raynaudthm1,raynaudthm2}) over $\C$.

Because it requires a fixed polarization, however, Moriwaki's method is too weak to extend the Hodge Index Theorem of~\cite{yz} and~\cite{carney} to all finitely generated fields. The Hodge Index Theorem, here Theorem~\ref{hodgeindex}, is the key step in proving an equivalence of canonical heights, and hence preperiodic points, to prove Theorem~\ref{dynamicstheorem}. 

We resolve this problem using a theory of \emph{vector-valued} heights and intersections defined by Yuan-Zhang in~\cite{yz2}. These intersections take values in the completed limit of metrized, $\Q$-linearized Picard groups of models of $K$, which forms an infinite-dimensional $\Q$-vector space. When $K$ is a global field with a unique model, composing with the degree map produces the usual $\R$-valued heights. Yuan-Zhang establish this theory for fields which are finitely generated over $\Q$. Here we expand upon their work to produce an intersection theory defined relative to any finitely generated extension $K$ of an arbitrary base field $k$. 

The generalization beyond $k=\Q$ has two main benefits. First, it means that $K$ can have positive characteristic. Second, it allows exact control over what our arithmetic does and doesn't notice. We show, using model-theory results of Chatzidakis-Hrushovski~\cite{chatzidakis1,chatzidakis2} that Northcott in this relative setting is intimately related to isotriviality. The following summarizes several results from Section~\ref{heightssection}.
\begin{introtheorem}\label{intronorthcott}
Let $K/k$ be a finitely generated extension of fields, and let $f:X\to X$ be an endomorphism defined over $K$, with a polarization $f^*L=L^{\otimes q}$.
\begin{enumerate}
\item
There exists a vector-valued canonical height $h_f$, such that for $x\in X(\overline K)$, we have $h_f(f(x))=qh_f(x)$.
\item
If $k=\Q$ or $\F_q$, or if $f:X\to X$ is \emph{totally non-isotrivial} over $k$, the subset of $X(\overline K)$ with bounded degree and trivial canonical height is finite. 
\end{enumerate}
\end{introtheorem}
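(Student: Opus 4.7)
The plan splits along the two parts of the statement: build $h_f$ by Tate's limiting procedure inside the completed metrized Picard group, and deduce Northcott by dominating $h_f$ by an ample height $h_L$ and then invoking a known Northcott property in each of the three regimes.

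\emph{Construction.} Fix any nef adelic representative $\overline L$ of the polarization in the completed group $\widehat{\Pic}(X)_{\Q}$ of the type constructed in Section~\ref{heightssection}. Since $f^*L\cong L^{\otimes q}$, the class $f^*\overline L - q\overline L$ is bounded (the underlying line bundle is trivial and the difference of metrics is bounded). The telescoping identity
\[
q^{-(n+1)}(f^{n+1})^*\overline L \;-\; q^{-n}(f^n)^*\overline L \;=\; q^{-(n+1)}(f^n)^*\bigl(f^*\overline L - q\overline L\bigr)
\]
has size $O(q^{-n})$ in the completion, so $q^{-n}(f^n)^*\overline L$ is Cauchy and converges to an $f$-canonical class $\overline L_f$ with $f^*\overline L_f = q\,\overline L_f$. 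Setting $h_f := h_{\overline L_f}$ gives the vector-valued canonical height satisfying $h_f\circ f = q\,h_f$, and by construction $h_f - h_L$ is bounded in $\widehat{\Pic}(X)_\Q$.

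\emph{Northcott.} The boundedness of $h_f - h_L$ implies that any $x\in X(\overline K)$ with $h_f(x)=0$ has $h_L(x)$ lying in a bounded region of the completed Picard group of $\Spec K$, so it suffices to prove Northcott for $h_L$ on points of bounded degree. When $k=\Q$, any Moriwaki polarization $(\cB,\overline H_1,\dots,\overline H_d)$ of $\Spec K$ pairs against $h_L$ to produce a scalar arithmetic height, to which the theorem of~\cite{moriwakifg1,moriwakifg2} applies. When $k=\F_q$, a fully geometric polarization of $\Spec K$ over $\F_q$ pairs similarly to produce a classical Weil height on a variety over a finite field, for which Northcott is standard. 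For the totally non-isotrivial case over a general $k$, one uses the Chatzidakis--Hrushovski dichotomy~\cite{chatzidakis1,chatzidakis2}: non-isotriviality rules out a $K$-trace subvariety on which bounded-degree, bounded-height points could accumulate, forcing finiteness.

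The main obstacle is the third case. Translating the model-theoretic finiteness of~\cite{chatzidakis1,chatzidakis2} into the completed-Picard framework requires showing that the totally non-isotrivial hypothesis rules out precisely those trace loci on which a classical height could be bounded while the vector-valued height $h_f$ still vanishes. The positivity estimates made available by the Hodge Index Theorem (Theorem~\ref{hodgeindex}) should be what upgrades a bounded vector-valued height to a classical bounded height against an appropriate geometric scalarization, but verifying this robustly in positive characteristic and for arbitrary $k$—where no canonical polarization exists—is the technical crux.
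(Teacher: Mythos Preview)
Your construction of $\overline L_f$ via Tate's limiting argument and the base $\Z$ Northcott reduction to Moriwaki's theorem both match the paper's approach. The problem is in the totally non-isotrivial case.

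Your strategy there is to bound $h_L$ by $h_f+O(1)$ and then invoke Northcott for $h_L$. But over an infinite base field $k$, Northcott for an ample height $h_L$ simply \emph{fails}: already for $\phi:\P^1\to\P^1$ the set $\{x:h_L(x)\le M\}$ is infinite once $M$ is large enough (this is noted explicitly in Section~\ref{heightsbaseksubsection}). So the reduction you propose is a dead end in exactly the case where you need it most. The Hodge Index Theorem does not rescue this---it plays no role in the Northcott argument and is a red herring here.

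The paper's route is different and does not pass through any Northcott property for $h_L$. One fixes a single geometric polarization $\cH_1,\dots,\cH_d$ and observes that the set
\[
S=\{x\in X(\overline K):h_f(x)\equiv 0,\ [K(x):K]\le D\}
\]
is contained in a \emph{limited} set in the sense of Chatzidakis--Hrushovski (\cite[Theorem~4.7]{chatzidakis1}). If $S$ were infinite, its Zariski closure would contain a positive-dimensional $f$-periodic component $Y$ on which canonical-height-zero points are dense, and then \cite[Theorem~1.11]{chatzidakis1} forces $(Y,f^m)$ to be constructibly isotrivial---contradicting total non-isotriviality. The key input you are missing is this specific model-theoretic dichotomy: a Zariski-dense limited subset of a dynamical system forces constructible isotriviality. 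Your phrasing (``non-isotriviality rules out a $K$-trace subvariety on which bounded-height points could accumulate'') gestures at this but does not identify the right statement, and conflates trace with constructible isotriviality, which are not the same thing in positive characteristic.
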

\emph{Totally non-isotrivial} is a necessary condition which is slightly stronger than not \emph{isotrivial}, and equivalent only for curves. This extends results of Baker~\cite{bakerisotrivial} and Benedetto~\cite{benedetto} for curves, and Lang-N\'eron~\cite{langneron} for abelian varieties.

Consider a polarizable dynamical system $f:X\to X$ defined over $\C(s,t)$. By the Lefschetz principle, we may assume this system is defined over some potentially-very-large-but-finite transcendence degree extension of $\Q$; indeed this is how Theorem~\ref{dynamicstheorem} applies to all fields $\K$. Theorem~\ref{intronorthcott} establishes, however, that heights relative to $\C(s,t)/\C$ are sufficient, provided the dynamical system is totally non-isotrivial over $\C$, and conversely, that the failure of Northcott finiteness for this height indicates some amount of isotriviality. In practice, these heights may be much more natural and practical to work with.

We also show how to compare different relative arithmetic settings. Suppose $K/k_1/k$ is a tower of finitely generated extensions. Heights relative to $K/k_1$ can be obtained as specializations of those relative to $K/k$ by intersecting with vertical fibers of a model for $K$ over a model for $k_1$, both over $\Spec k$. 

These results have immediate applications to the field of \emph{unlikely intersections} in arithmetic dynamics. Problems in this area typically study intersections of families of subvarieties, often defined to answer a dynamical question, and posit that when the intersection behaves differently than the generic case, there must be an arithmetic explanation. See \cite{ZannierUnlikely} for an overview of problems in this area. Results are often restricted to number fields or to one dimensional families, since they rely on the use of height functions with Northcott finiteness.

In some cases, one can extend results to a larger transcendence degree extension $K/k$, i.e. a higher dimensional family, by either specializing down from $K$ via transcendence degree one subfields, or by building a tower of transcendence degree one extensions up from $k$. But both require new arguments to connect each subsequent extension and to handle isotriviality, and do not always work. 

Instead, using the heights and accompanying Northcott properties of this paper, one can now generalize many of these results to higher dimensional families with little additional alteration to the method. Theorem~\ref{intronorthcott} also answers the isotriviality questions that often come up in these contexts. As an example, we can obtain the Bogomolov-type result of ~\cite[Theorem 5.2]{Ghioca2020} for unicritical polynomials in arbitrary dimension families as opposed to just for curves. This is also implied by~\cite[Theorem 1.4]{Ghioca2020b} but the proof requires a different, and more difficult argument. We expect there are numerous applications and simplifications like this. More broadly, the results presented here, via the Lefschetz principle, enable arithmetic methods to be used over all fields, instead of just number fields or function fields of curves.

\textbf{Acknowledgements}
The author thanks Dragos Ghioca, Tom Tucker, and Xinyi Yuan for several helpful discussions while preparing this paper. Many of the ideas presented here come from Yuan and Zhang's work~\cite{yz2}, and the author thanks them both for their insight, explanations, and encouragement to complete this work.

\subsection{Vector-valued heights and intersections}
Throughout this paper, let $K$ be a finitely generated extension of an arbitrary field $k$. We divide this into two settings, generalizing the distinction between number fields and function fields. 
\begin{itemize}
\item\textbf{The base $\Z$ case}: Here $k=\Q$ and we define $d\ge0$ to be the transcendence degree of $K$ over $k=\Q$. As the name implies, the base scheme for all models in this case will be $\Spec \Z$.
\item\textbf{The base $k$ case}: In this case $k$ is allowed to be any field, and we define $d\ge0$ such that $K$ has transcendence degree $d+1$ over $k$. The base scheme for all models will be $\Spec k$.
\end{itemize}
The case $d=0$ corresponds to $K$ being a number field or a transcendence degree one function field over $k$, in which case most of the results presented here are already established. 

\begin{remark}
In characteristic zero, the two cases are not always mutually exclusive. For example, the fields $K=\Q(t)$ and $k=\Q$ could be studied with $d=1$ (base $\Z$), or with $d=0$ (base $k$). These two perspectives on the same field will produce different (though related; see Section~\ref{heightscompare}) heights and intersection numbers. Thus if $k=\Q$, to clarify the intersection theory under consideration, one must specify whether one's perspective is the base $\Z$ or base $k$ case, or equivalently, the value of $d$. 
\end{remark}

\begin{definition}
Let $X$ be a geometrically normal projective variety over $K$.
\begin{enumerate}
\item
A \emph{projective arithmetic variety} (resp. \emph{open arithmetic variety}) is an integral scheme which is projective (resp. quasi-projective) over $\Spec k$ in the base $k$ case, or projective (resp. quasi-projective) and flat over $\Spec \Z$ in the base $\Z$ case. 

\item
Given an open arithmetic variety $\cU$, a \emph{projective model} for $\cU$ is an open embedding $\cU\hookrightarrow\cX$ into a projective arithmetic variety $\cX$, whose complement $\cX\backslash\cU$ is the support of an effective Cartier divisor.
\item
A \emph{projective (resp. open) arithmetic model} for $K$ is a projective (resp. open) arithmetic variety whose function field is $K$.
\item
A \emph{projective (resp. open) arithmetic model} for $X/K$ is a projective (resp. open) arithmetic model $\cV$ for $K$, together with a projective and flat morphism $\cU\to\cV$ whose generic fiber is $X\to \Spec K$.
\end{enumerate}
\end{definition}

Fix an open arithmetic variety $\cU$. In Section~\ref{defssection} we construct the group $\widehat\Pic(\cU)_{\Mod}$ as the limit over projective arithmetic models $\cX$ for $\cU$ of either $\Pic(\cX)_{\Q}$ (in base $k$), or $\widehat\Pic(\cX)_{\Q}$, the group of $\Q$-line bundles with $C^{\infty}$-Hermitian metrics (in base $\Z$). This is given a topology and completed to $\widehat\Pic(\cU)_{\Cont}$.  

Now let $X$ be a geometrically normal $K$ variety, and define the group of adelic line bundles on $X$,
\[
\widehat\Pic(X)_{\Cont}:=\lim_{\cU\to\cV}\widehat\Pic(\cU)_{\Cont},
\]
where this limit is taken over open arithmetic models $\cU\to\cV$ for $X\to\Spec K$. We additionally define $\widehat\Pic(K)_{\Cont}:=\widehat\Pic(\Spec(K))_{\Cont}.$ 

The subset $\widehat\Pic(X)_{\Nef}\subset\widehat\Pic(X)_{\Cont}$ of \emph{nef} adelic line bundles consists of those which come from limits of nef (Hermitian) line bundles on projective arithmetic models, and the subgroup $\widehat\Pic(X)_{\Int}$ of \emph{integrable} adelic line bundles consists of differences of nef adelic line bundles. On this subgroup, we show that the Deligne pairing~\cite{Deligne} on $\widehat\Pic(\cU)$ converges with respect to the above limits, and defines a vector-valued relative intersection product on $X$, and $\R$-valued product on $K$:
\[
\widehat\Pic(X)_{\Int}^{n+1}\longrightarrow\widehat\Pic(K)_{\Int},
\qquad\qquad
\widehat\Pic(K)_{\Int}^{d+1}\longrightarrow\R.
\]

We also establish comparative relationships:
\begin{definition}\label{positivitydefs}
Let $\overline L,\overline M\in\widehat\Pic(X)_{\Int}$ and let $\overline H_1,\overline H_2\in\widehat\Pic(K)_{\Int}$. We define:
\begin{enumerate}
\item
$\overline H_1$ is \emph{pseudo-effective}, and we write $\overline H_1\ge0$ provided that $\overline H_1\cdot \overline N_1\cdots\overline N_d\ge0$ for any $\overline N_1,\dots,\overline N_d\in\widehat\Pic(K)_{\Nef}$. We write $\overline H_1\ge \overline H_2$ to mean that $\overline H_1-\overline H_2$ is pseudo-effective.
\item
$\overline H_1$ is \emph{numerically trivial}, written $\overline H_1\equiv0$ provided that $\overline H_1\cdot \overline N_1\cdots\overline N_d=0$ for any $\overline N_1,\dots,\overline N_d\in\widehat\Pic(K)_{\Int}$. We write $\overline H_1\equiv\overline H_2$ to mean $\overline H_1-\overline H_2\equiv0$, and then $\overline H_1\equiv \overline H_2$ if and only if $\overline H_1\ge\overline H_2$ and $\overline H_2\ge \overline H_1$.
\item

$\overline L$ is \emph{arithmetically positive}, written $\overline L\gg0$, provided that $L_1$ is ample and $\overline L-\pi^*\overline N\in\widehat\Pic(X)_{\Nef}$ for some $\overline N\in\widehat\Pic(k_1)_{\Int}$ with arithmetic degree $\widehat\deg(\overline N)>0$, where $k_1$ is any subfield of $K$ such that $K$ has transcendence degree $d$ over $k_1$. Since this makes $k_1$ either a number field or a transcendence degree one function field over $k$, the arithmetic degree is defined as in~\cite{carney,yz}. We use the same convention as in (1) to define $\overline L\gg\overline M$.
\item $\overline L$ is \emph{vertical}, notated $\oL\in\widehat\Pic(X)_{\Vrt}$, provided that $L=\cO_X$.

\item
$\overline M$ is $\overline L$-\emph{bounded} if there exists $\epsilon>0$ such that both $\overline L\pm\epsilon\overline M\in\widehat\Pic(X)_{\Nef}$.

\end{enumerate}
\end{definition}

For $\oL\in\widehat\Pic(X)_{\Int}$ and $Z$ an integral closed $\overline K$-subvariety of $X$, write $\widetilde Z$ for the minimal $K$-subvariety of $X$ which contains $Z$, and define the height of $Z$ as
\[
h_{\oL}(Z):=\frac{\left(\oL|_{\widetilde Z}\right)^{\dim Z+1}}{\left(\dim Z+1\right)\left(L|_{\widetilde Z}\right)^{\dim Z}}\in\widehat\Pic(K)_{\Int}. 
\]
When $Z\in X(\overline K)$, we say $(L|_{\widetilde Z})^{\dim Z}:=\deg(Z)$. Given a polarized dynamical system $(X,f,L)$, we will construct a unique $f^*$-equivariant lift $\oL_f\in\widehat\Pic(X)_{\Int}$, which produces the canonical height $h_f=h_{\oL_f}$. 

We define the following, after Chatzidakis-Hrushovski~\cite{chatzidakis1,chatzidakis2}. 
\begin{definition}\label{totallynonisotrivial}
A \emph{constructible map} consists of a composition of rational maps and inverses of purely inseparable rational maps. A dynamical system $f:X\to X$ defined over $K$ is \emph{constructibly isotrivial} over $k$ provided that there exists an irreducible projective variety $Y$ with a dominant rational map $g:Y\to Y$ defined over $k$, and an isomorphism of constructible maps $(X,f)\cong(Y_K,g_K)$, i.e. 
\[
\begin{tikzcd}
X\arrow[shift left=.5ex]{r}{\phi}
&
Y_K \arrow[shift left=.5ex]{l}{\psi}
\end{tikzcd}
\]
such that $\phi$ and $\psi$ are constructible, inverse to each other, and $\psi\circ g\circ\phi=f$. We call $f:X\to X$ \emph{totally non-isotrivial} provided that it is not itself constructibly isotrivial and further no positive dimensional periodic closed subvariety of $X$ is constructibly isotrivial.
\end{definition}

Theorem~\ref{intronorthcott} states 
that if $k$ is finite, if $k=\Q$, or if $f:X\to X$ is \emph{totally non-isotrivial} over $k$, the set
\[
\left\{x\in X(\overline K):h_{f}(x)\equiv0,\text{ and }[K(x):K]\le D\right\}
\]
is finite.

\subsection{The Hodge Index Theorem}
Classically, the Hodge Index Theorem states that the intersection pairing on a projective algebraic surface has signature $+1,-1,\dots,-1$. Faltings~\cite{faltings} and Hriljac~\cite{hriljac} independently proved the same for arithmetic surfaces, by equating the arithmetic intersection product to the negative of the N\'eron-Tate height pairing on the Jacobian of the generic fiber, which was then generalized by Moriwaki~\cite{Moriwaki} to higher dimensional arithmetic varieties. Here we extend the work of Yuan-Zhang~\cite{yz} and the author~\cite{carney} for adelic line bundles to arbitrary finitely generated fields.

In base $k$, we require Chow's trace functor. Suppose $K/k$ falls into the base $k$ setting and $A$ is an abelian $K$-variety. 
The \emph{$K/k$-trace} $\Tr_{K/k}(A)$ is an abelian variety defined over $k$ along with a trace morphism 
\[
\tau:\Tr_{K/k}(A)\longrightarrow A
\]
with the universal property that any morphism $B_K\to A$, where $B$ is an abelian variety defined over $k$, must factor through $\tau$. In characteristic zero, $\tau$ is an injection and embeds $\Tr(A)_{K/k}$ as a subgroup of $A$, but in positive characteristic it may have a finite infinitesimal kernel (this is one explanation for the use of constructible maps above). Let $\Pic^0_{\Red,X}$ be the reduction of the neutral component of the Picard variety of $X$. We will show that every 
\[
L\in\Tr_{K/k}\Pic^0(X):=\Tr_{K/k}\left(\Pic^0_{\Red,X}\right)(k)
\]
 can be given unique additional structure as an adelic line bundle inside $\widehat\Pic(X)_{\Int}$. In base $\Z$, since there is no field underlying $\Z$, we set $\Tr_{K/k}=0$ by default. We prove the following theorem. 

\begin{introtheorem}\label{hodgeindex}
Let $X/K$ be a geometrically normal, geometrically connected projective variety of dimension $n\ge1$. Let $\overline M\in\widehat\Pic(X)_{\Int}$ and $\oL_2,\dots,\oL_n\in\widehat\Pic(X)_{\Nef}$ such that each $L_i$ is big and $M\cdot L_2\cdots L_n=0$ on $X$ if $n>1$, or $\deg M=0$ when $X$ is a curve. Then
\begin{enumerate}
\item \emph{(Inequality)}
\[
\overline M^2\cdot\oL_2\cdots\oL_n
\le0.
\]
\item \emph{(Equality)}
Further, if $\oL_i\gg0$ and $\overline M$ is $\oL_i$-bounded for all $i$,
\[
\overline M^2\cdot\oL_2\cdots\oL_n\equiv0
\]
if and only if 
\[
\overline M\in\pi^*\widehat\Pic(K)_{\Int}+\Tr_{K/k}\Pic^0(X).
\]
\end{enumerate}
\end{introtheorem}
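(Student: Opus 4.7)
The plan is to bootstrap from the $d=0$ cases, which are Yuan--Zhang \cite{yz} in the base-$\Z$ setting and the author \cite{carney} in the transcendence-degree-one function-field setting. Throughout I assume $d\ge 1$.

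For the inequality (1), I would first rephrase pseudo-effectivity: by definition, $\overline M^{\,2}\cdot \oL_2\cdots\oL_n\le 0$ in $\widehat\Pic(K)_{\Int}$ is equivalent to showing
\[
\overline M^{\,2}\cdot \oL_2\cdots\oL_n\cdot \pi^*\overline N_1\cdots \pi^*\overline N_d \le 0 \quad \text{in }\R
\]
for every $\overline N_1,\dots,\overline N_d\in\widehat\Pic(K)_{\Nef}$. The quantity on the left is a genuine $\R$-valued arithmetic intersection on $X$ viewed as a variety over the appropriate global base (either $\Spec\Z$ or a transcendence-degree-one subfield of $K$ over $k$). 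Using the convergence of the Deligne pairing in the construction of $\widehat\Pic(X)_{\Int}$, I would approximate all classes by ones coming from a common projective arithmetic model and invoke the $d=0$ inequality on that model, then pass to the limit. The vanishing $M\cdot L_2\cdots L_n=0$ on $X$ combined with the verticality of the $\pi^*\overline N_j$ with respect to the generic fiber supplies exactly the orthogonality required as input to the $d=0$ Hodge index theorem.

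For the equality (2), the forward direction reduces, via bilinearity of the intersection product and the already-proven inequality (1), to the two cases $\overline M=\pi^*\overline N$ and $\overline M\in\Tr_{K/k}\Pic^0(X)$; the first is handled by the projection formula, and the second uses the canonical adelic lift of a trace class together with a Faltings--Hriljac-style identification forcing its vector-valued self-intersection to vanish. The harder direction is to show that vanishing of $\overline M^{\,2}\cdot\oL_2\cdots\oL_n$ forces the claimed decomposition. Since each $L_i$ is ample, $M\cdot L_2\cdots L_n=0$ combined with the classical geometric Hodge Index on $X$ forces $M$ numerically trivial on $X$, so $M\in\Pic^0_{\Red,X}(K)$. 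Using the $K/k$-trace, write $M=\tau(M_0)+M'$ with $M_0$ in the trace and $M'$ with vanishing $K/k$-trace; the class $\tau(M_0)$ admits a canonical lift to $\widehat\Pic(X)_{\Int}$, reducing us to showing $\overline M'\in\pi^*\widehat\Pic(K)_{\Int}$ modulo numerical equivalence.

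The main obstacle is this last step. I would establish a vector-valued Faltings--Hriljac formula: for $M'\in\Pic^0_{\Red,X}(K)$ with vanishing $K/k$-trace, the self-intersection $-(\overline M')^{\,2}\cdot \oL_2\cdots\oL_n$ equals, up to a positive scalar determined by $\deg(L_2\cdots L_n)$, the vector-valued N\'eron--Tate height of the corresponding point in $\Pic^0_{\Red,X}(K)$ modulo the image of $\tau$. The $\oL_i$-boundedness of $\overline M$ together with $\oL_i\gg 0$ ensures this pairing is continuous and positive semidefinite, so the vanishing in $\widehat\Pic(K)_{\Int}$ forces the N\'eron--Tate height of $M'$ to vanish against every nef test tuple on $K$. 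Because the trace of $M'$ is zero, the residual abelian variety is totally non-isotrivial in the sense of Definition~\ref{totallynonisotrivial}, so Theorem~\ref{intronorthcott} concludes that $M'$ represents a torsion class. Up to a vertical adjustment, this places $\overline M'$ in $\pi^*\widehat\Pic(K)_{\Int}$ modulo numerical equivalence, completing the proof.
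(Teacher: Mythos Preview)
Your inequality argument is essentially the paper's: approximate by models and apply the $d=0$ Hodge index over an intermediate $k_1$. That part is fine.

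For the equality, however, there is a genuine gap. Your phrase ``up to a vertical adjustment, this places $\overline M'$ in $\pi^*\widehat\Pic(K)_{\Int}$'' hides the entire content of the \emph{vertical case} of the theorem: showing that a vertical $\overline N$ with $\overline N^{\,2}\cdot\oL_2\cdots\oL_n\equiv 0$ must lie in $\pi^*\widehat\Pic(K)_{\Int}$. This is not a formality. The paper proves it separately (Section~\ref{verticalcase}) by a slicing argument: one restricts to horizontal closed subschemes $\cW\subset\cV$ of every intermediate dimension, applies the $d=0$ Hodge index over the function field of each $\cW$, and concludes that the metric function $\log\|1\|_{\overline N}$ is fiberwise constant on a dense set of points of the Berkovich space $\cV^{\an}$, hence everywhere by continuity. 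Without this, your argument has no way to control the vertical discrepancy between an arbitrary adelic lift of $M'$ and the pullback from $K$.

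There is also a structural difference worth noting. You attempt a direct Faltings--Hriljac identification in arbitrary dimension, expressing $-(\overline M')^{\,2}\cdot\oL_2\cdots\oL_n$ as a N\'eron--Tate height. The paper instead decomposes $\overline M=\overline M_0+\overline N$ into flat plus vertical (which already requires the vertical case to be settled, via Lemma~\ref{flatexistence}), proves the Faltings--Hriljac formula only for \emph{curves} via the explicit Poincar\'e bundle on $\Jac_X\times\Jac_X$, and then runs a Bertini induction on $\dim X$ using the Lefschetz hyperplane theorem and compatibility of the trace under restriction. Your direct higher-dimensional Faltings--Hriljac is plausible but not obvious: the flat metric is constructed via the Poincar\'e bundle on $X\times\Alb_X^\vee$, and tracing through to get an exact equality with $h_{\NT}$ (not just a positive-semidefinite pairing) requires the curve case as a base. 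Finally, your deduction that $M$ is numerically trivial from $M\cdot L_2\cdots L_n=0$ alone is too quick; the paper uses the arithmetic positivity $\oL_n\gg 0$ to split off a vertical piece and extract the needed geometric condition $M^{2}\cdot L_2\cdots L_{n-1}=0$ before invoking the classical Hodge index.
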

This is proven by Yuan-Zhang~\cite{yz2} when $K$ is a finitely generated extension of $\Q$ (note that the trace doesn't appear in that case), and by the author~\cite{carney} when $K$ is a transcendence degree one extension of an arbitrary field $k$.

\subsection{Outline of paper}
Section~\ref{defssection} makes precise the definitions of the groups $\widehat\Pic(X)_{\Cont}$ and $\widehat\Pic(K)_{\Cont}$, their topologies, and their named subsets and subgroups. Additionally, we present a generalization of Berkovich's analytic spaces, and use this to connect adelic line bundles and metrized line bundles. Section~\ref{intersectionssec} then recalls the Deligne pairing, and constructs a relative intersection product for adelic line bundles. We also present a projection formula and show how to relate this relative product to the classical (in base $k$) and arithmetic (in base $\Z$) intersection pairings which take values in $\R$.

Next, given a polarizable endomorphism $f:X\to X$, Section~\ref{admissible} constructs particular lifts of line bundles $L\in\Pic(X)$ to adelic line bundles $\oL_f\in\widehat\Pic(X)_{\Int}$ in an $f^*$-equivariant way. These are the adelic line bundles used to define canonical heights. Section~\ref{trace} details the $K/k$-trace construction, which produces an important example of such a lift.

Section~\ref{heightssection} proves Theorem~\ref{intronorthcott}, as well as several related corollaries about vector-valued heights and canonical heights. Section~\ref{essentialminimumsec} generalizes Zhang's essential minimum~\cite[Theorem 1.11]{Z95} to the context of this paper, providing a relationship between heights of subvarieties and heights of points on those subvarieties.

Section~\ref{hodgeindexsection} proves the Hodge Index Theorem, Theorem~\ref{hodgeindex}, beginning with the inequality, then proving the equality for vertical $\oM$, for when $X$ is a curve, and finally in general. Section~\ref{dynamicssection} extends the results of Section~\ref{admissible} to construct \emph{admissible} metrics, then applies the Hodge Index Theorem to prove Theorem~\ref{dynamicstheorem} and Corollary~\ref{introequilibrium}.

\section{Adelic line bundles}\label{defssection}




This section follows~\cite[Section 2]{yz2} in the base $\Z$ case, with some additional considerations in the base $k$ case. As a notational convenience so that we can work in the base $k$ and base $\Z$ settings simultaneously, we define the following.
\begin{definition}
Let $\cX$ be a projective arithmetic variety. In the base $k$ setting, we simply define $\widehat\Div(\cX):=\Div(\cX)$ and $\widehat\Pic(\cX):=\Pic(\cX)$ to be the usual divisor and Picard groups. In the base $\Z$ setting, $\widehat\Div(\cX)$ is defined to be the arithmetic divisor group, whose elements are of the form $\overline\cD=(\cD,g)$, where $\cD$ is a divisor on $\cX$ and $g$ is a Green's function on $\cX(\C)$. Define $\widehat\Pic(\cX)$ to be the group of $C^{\infty}$-Hermitian line bundles on $\cX$. 
\end{definition}
We will write elements $\overline\cD\in\widehat\div(\cX)$ and $\overline\calL\in\widehat\Pic(\cX)$ in both settings, where all properties of the Green's function or the Hermitian metric are vacuously true in the base $k$ setting.

$\widehat\Div(\cX)$ is partially ordered by effectivity: an arithmetic divisor $(\cD,g)$ is \emph{effective} (resp. \emph{strictly effective}) if $\cD$ is effective (resp. strictly effective, i.e. effective and non-zero) and $g\ge0$ (resp. $g>0$) away from the support of $\cD(\C)$ on $\cX(\C)$. Then for $\overline\cD,\overline\cE\in\widehat\Div(\cX)$, writing $\overline\cD<\overline\cE$ when $\overline\cE-\overline\cD$ is strictly effective induces a partial ordering.

If $d=0$, there is a unique choice of projective model for $K$ in the base $k$ case, namely the unique projective curve whose function field is $K$. For larger $d$, however, there is no one unique choice, so in order to work in full generality, we take limits over all possible models.

\begin{definition}
Let $\cU$ be an open arithmetic variety. Define
\[
\widehat\Div(\cU)_{\Mod}:=\lim_{\overrightarrow{\cX}}\widehat{\Div}(\cX)_{\Q},\qquad \widehat\Pic(\cU)_{\Mod}:=\lim_{\overrightarrow{\cX}}\widehat{\Pic}(\cX)_{\Q}
\]
via pullbacks on the inverse system of projective models $\cU\hookrightarrow\cX$.
\end{definition}

If $\cX$ and $\cX'$ are two different projetive models, there exists a third projective model which dominates both, and we pull back divisors on $\cX$ and $\cX'$ to this third model to compare them. Thus the partial ordering above defines a topology on $\widehat\Div(\cU)_{\Mod}$. Fix a projective model $\cU\hookrightarrow\cX$ and an effective Cartier divisor $\cD$ with support equal to $\cX\backslash\cU$, and let $g$ be any Green's function of $\cD$ such that $(\cD,g)$ is strictly effective. Then the sets
\[B(\epsilon,0):=\{\overline\cE\in\widehat\Div(\cU)_{\Mod}|-\epsilon\overline\cD<\overline\cE<\epsilon\overline\cD\},\quad \epsilon\in\Q_{>0},
\]
form a basis of open sets at 0, and thus by translation, at every element of $\widehat\Div(\cU)_{\Mod}$. Further, this topology doesn't depend on the choice of projective model or divisor. If $\cX'$ is a different model and $\overline\cD'$ a different divisor with the above properties, we have 
\[
\frac1\alpha\overline\cD<\overline\cD'<\alpha\overline\cD
\]
for some $\alpha>1$.

This allows us to make the following definition:
\begin{definition}
$\widehat\Div(\cU)_{\Cont}$ is defined to be the completion of $\widehat\Div(\cU)_{\Mod}$ with respect to the above topology. We further define $\widehat\Prin(\cU)_{\Cont}$ to be the completion of the subgroup of principal arithmetic divisors in $\widehat\Div(\cU)_{\Mod}$, and then define
\[
\widehat\Pic(\cU)_{\Cont}:=\widehat\Div(\cU)_{\Cont}/\widehat\Prin(\cU)_{\Cont}.
\]
This is called the group of \emph{adelic line bundles} on $\cU$.
\end{definition}

Equivalently, we can construct $\widehat\Pic(\cU)_{\Cont}$ as the completion of $\widehat\Pic(\cU)_{\Mod}$ as follows. Let 
$
\left(\overline\calL_i\right)_{i\ge1}
$
be a sequence of (Hermitian, in the base $\Z$ setting) line bundles on projective models $\cX_i$ for $\cU$, with 
\begin{enumerate}
\item
compatible dominating morphisms $\pi_{i,j}:\cX_j\to\cX_i$ for each pair $j\ge i\ge1$, and 
\item
a compatible system $\{\ell_{ij}\}$ of rational sections of $\calL_j\otimes\pi^*_{i,j}\calL_i^{-1}$ such that $\div(\ell_{i,j})$ is supported on $\cX_j\backslash\cU$ for all $j\ge i\ge 1$. 
\end{enumerate}

Fix a strictly effective divisor $\overline\cD$ with support $\cX\backslash\cU$. Such a system converges provided that for any $\epsilon>0$, there exists an $i_0$ such that when $j\ge i\ge i_0$, 
\[
-\epsilon\pi^*_{1,j}\overline\cD<\div(\ell_{i,j})<\epsilon\pi^*_{1,j}\overline\cD.
\]
A sequence converges to zero if there is a rational section $\ell_i$ of each $\overline\calL$ such that $\ell_{i,j}=\ell_j\otimes\pi^*_{i,j}\ell_i^{-1}$ and such that when $i\ge i_0$, 
\[
-\epsilon\pi^*_{1,i}\overline\cD<\div(\ell_{i})<\epsilon\pi^*_{1,i}\overline\cD.
\]
$\widehat\Pic(\cU)_{\Cont}$ is the set of such convergent sequences modulo those which converge to zero. Like the topology defined on $\widehat\Div(\cU)_{\Mod}$, this doesn't depend on the choice of $\overline\cD$. The partial ordering by effectivity extends to this completion.

We prove a lemma justifying calling this the completion of $\widehat\Pic(\cU)_{\Mod}$.

\begin{lemma}\label{completion}
The map $\widehat\Pic(\cU)_{\Mod}\to\widehat\Pic(\cU)_{\Cont}$ is injective.
\end{lemma}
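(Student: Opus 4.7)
The strategy is to unwind the concrete description of $\widehat\Pic(\cU)_{\Cont}$ as convergent sequences modulo null sequences given just before the lemma, and to analyze directly what it means for the image of a single class to vanish. Fix a representative $\overline\calL$ on a projective model $\cX$ of $\cU$, together with a strictly effective divisor $\overline\cD$ supported on $\cX\setminus\cU$. The image of $[\overline\calL]$ in $\widehat\Pic(\cU)_{\Cont}$ is represented by the constant sequence $(\pi_{1,i}^*\overline\calL;\,\ell_{i,j}=1)$ along any tower of dominating projective models $\cX=\cX_1\leftarrow\cX_2\leftarrow\cdots$, so this image vanishes precisely when the constant sequence itself converges to zero.

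Unpacking that convergence yields, for every $\epsilon>0$ and all $i$ sufficiently large, rational sections $\ell_i$ of $\pi_{1,i}^*\overline\calL$ with $\ell_j=\pi_{i,j}^*\ell_i$ and $-\epsilon\,\pi_{1,i}^*\overline\cD<\div(\ell_i)<\epsilon\,\pi_{1,i}^*\overline\cD$. The compatibility $\ell_{i,j}=1$ forces $\ell_i=\pi_{1,i}^*\ell$ for a single rational section $\ell$ of $\overline\calL$ on $\cX$, and since pullback preserves the partial order on divisors, this inequality descends to
\[
-\epsilon\,\overline\cD<\div(\ell)<\epsilon\,\overline\cD\quad\text{on }\cX,\text{ for every }\epsilon>0.
\]
Writing $\div(\ell)=\sum a_j E_j$ and $\overline\cD=\sum b_j E_j$, each coefficient must satisfy $|a_j|<\epsilon b_j$ for every $\epsilon>0$: components outside the support of $\overline\cD$ have $b_j=0$ and therefore $a_j=0$ immediately, while boundary components force $a_j=0$ by letting $\epsilon\to 0$. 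Hence $\div(\ell)=0$ and $\ell$ is a nowhere-vanishing global section of $\overline\calL$ on $\cX$. In the base $k$ setting this directly exhibits $\overline\calL\cong\cO_\cX$. In the base $\Z$ setting the identical squeeze applied to the Green's function $g_\ell=-2\log|\ell|$ forces $|\ell|\equiv 1$, upgrading the trivialization to an isometric isomorphism of Hermitian line bundles. Either way, $[\overline\calL]=0$ already in $\widehat\Pic(\cX)_\Q$, and a fortiori in $\widehat\Pic(\cU)_{\Mod}$.

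The only mildly delicate point, and the one place where the two cases diverge, is the base $\Z$ bookkeeping: one must separate the strict effectivity of $\epsilon\overline\cD\pm\div(\ell)$ into its divisorial and analytic parts and verify that each squeeze delivers the corresponding vanishing, so that the resulting trivialization is honestly metric-preserving rather than only an isomorphism of underlying line bundles. The core step in both settings reduces to the elementary observation that a real number bounded by $\epsilon b_j$ for every $\epsilon>0$ must vanish.
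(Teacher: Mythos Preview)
Your argument is correct, and it takes a genuinely different route from the paper. Both proofs begin the same way: an element of $\widehat\Pic(\cU)_{\Mod}$ in the kernel is represented by a single (Hermitian) line bundle on a fixed projective model $\cX$, and via the constant null sequence one obtains a rational section $\ell$ with $-\epsilon\,\overline\cD<\widehat\div(\ell)<\epsilon\,\overline\cD$ for all $\epsilon>0$. From here the paper intersects with $n-1$ ample (Hermitian) classes to force $\widehat\div(\ell)\cdot\overline\calL_1\cdots\overline\calL_{n-1}=0$, and then invokes the Hodge Index Theorem (Moriwaki's in base $\Z$, the classical one from SGA6 in base $k$) to conclude. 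You instead argue directly: the coefficient of each prime divisor is squeezed to zero by $\epsilon b_j$, and in base $\Z$ the Green's function is squeezed pointwise by $\epsilon g_{\cD}$ away from the boundary and then extended by continuity. Your approach is more elementary and entirely self-contained, avoiding any appeal to intersection theory or Hodge index results; the paper's approach is shorter to state but leans on substantially heavier machinery already in play elsewhere in the paper. One small remark: your descent step (``pullback preserves the partial order, so the inequality descends'') uses that for a proper birational $\pi$ one has $\pi_*\pi^*E=E$ with $\pi_*$ preserving effectivity; this is routine but worth a word, as is the observation that $g_\ell$ is continuous across $\operatorname{supp}\cD(\C)$ once $\div(\ell)=0$.
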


\begin{proof}
Fix a projective model $\cX$ and a strictly effective arithmetic divisor $\overline\cD$ whose support is $\cX\backslash\cU$. 
Suppose $\overline\cE\in\widehat\Pic(\cU)_{\Mod}$ is such that for all $\epsilon>0$,
$
\epsilon\overline\cD\pm\overline\cE
$
are both strictly effective. Without loss of generality we may assume $\overline\cE$ is also realized on $\cX$. Then, letting $n=\dim\cU$, and letting $\overline\calL_1,\dots,\calL_{n-1}$ be ample (Hermitian) line bundles in $\Pic(\cX)$, we have
\[
\left(\epsilon\overline\cD\pm\overline\cE\right)\cdot\overline\calL_1\dots\overline\calL_{n-1}>0
\]
for all $\epsilon>0$, which implies
\[ 
\overline\cE\cdot\overline\calL_1\dots\overline\calL_{n-1}=0.
\]
By Moriwaki's Hodge Index Theorem for arithmetic divisors~\cite{Moriwaki} in the base $\Z$ case, and the classical Hodge Index Theorem in the base $k$ case~\cite[Exp. XIII, Cor. 7.4]{SGA6}, we must have $\overline\cE=0$.
\end{proof}

\begin{definition}
An adelic line bundle $\overline\calL\in\widehat\Pic(\cU)_{\Cont}$ is called \emph{nef} provided that it can be represented by a convergent sequence $\{\overline\calL_i\}_{i\ge1}$ where each $\calL_i$ is nef (for the definition of a nef Hermitian line bundle, see~\cite{Z95}). We call
$\overline\calL$ \emph{integrable} provided it can be written as the difference of two nef adelic line bundles. The subset of nef elements, and subgroup of integrable elements of $\widehat\Pic(\cU)_{\Cont}$ are denoted $\widehat\Pic(\cU)_{\Nef}$ and $\widehat\Pic(\cU)_{\Int}$, respectively.
\end{definition}

Having defined the divisor and Picard groups of limits of projective models of open arithmetic varieties, we now take limits over the inverse system of open arithmetic models for the projective variety $X/K$.

\begin{definition}
We define
\[
\widehat\Pic(X)_{\Cont}:=\lim_{\overrightarrow{\cU\to\cV}}\widehat\Pic(\cU)_{\Cont},
\]
where the limit is taken over open arithmetic models $\cU\to\cV$ for $X\to\Spec K$. When $X=\Spec K$, we write simply $\widehat\Pic(K)_{\Cont}$. We also define the groups $\widehat\Pic(X)_{\Mod}$ and $\widehat\Pic(X)_{\Int}$, and the set $\widehat\Pic(X)_{\Nef}$, via the same corresponding limits.
\end{definition}

We will write elements as $\overline L\in\widehat\Pic(X)_{\Cont}$. While, strictly speaking, $\overline L$ is a projective limit of convergent sequences of line bundles $\ocL_i$ on models, it makes sense to consider the \emph{generic fiber} $L$ on $X$, as $\overline\calL_{i,K}$ are all equal.

\begin{definition}
$\overline L\in\widehat\Pic(X)_{\Int}$ is called \emph{vertical} provided that $L\cong\cO_X$. Denote by $\widehat\Pic(X)_{\Vrt}$ the subgroup of vertical adelic line bundles.
\end{definition}

\subsection{Models over intermediate fields}\label{basek1}
Suppose we have an intermediate field $K/k_1/k$. Fix open arithmetic models $\cV$ for $K$ and $\cW$ for $k_1$. Since we have a natural morphism $\Spec K\to\Spec k_1$, after possibly replacing $\cV$ by an open sub-model we have a morphism $\pi:\cV\to\cW$. Now fix a projective model $\cC$ for $\cW$. By Nagata's compactification theorem~\cite{Nagata} (see~\cite{conradnagata} for a modern treatment), $\pi$ extends to a projective morphism $\pi:\cB\to\cC$ of projective arithmetic models for $K$ and $k_1$. This is flat over a Zariski open locus, and then by Raynaud-Gruson's flattening theorem~\cite{raynaudflattening}, after replacing $\cC$ with a blowup and $\cB$ with its strict transform, we may assume $\cB\to\cC$ is flat.

Given any two projective arithmetic models (for a field or a projective variety), we can find a third projective arithmetic model which dominates both. Thus, we may assume without loss of generality that every projective arithmetic model $\cX$ for $X$ is fibered over some projective arithmetic model $\cB$ for $K$, which is in turn fibered over some projective arithmetic model $\cC$ for $k_1$. In particular, it makes sense to consider the base change $\cX_{k_1}$, and we can study this from two different viewpoints.

First, consider the finitely generated extension $k_1/k$. Then $\cX_{k_1}$ is a projective variety over $k_1$, and $\cX\to\cC$ is a projective arithmetic model. Second, consider the finitely generated extension $K/k_1$. Then $X$, as before, is a projective variety over $K$, but now $\cB_{k_1}$ is a projective arithmetic model for $K$ over $k_1$, and $\cX_{k_1}\to\cB_{k_1}$ is a projective arithmetic model for $X\to\Spec(K)$. Note that if originally $K/k$ falls into the base $\Z$ case, the first viewpoint will still be base $\Z$, but the second is now base $k$ (meaning that the base is the spectrum of a field; the specific base field is $k_1$ not $k$).

The first viewpoint provides a proof technique which we will use often, effectively trading the dimension of our projective variety against the transcendence degree of our field extension. The second viewpoint provides a way to adjust the granularity of the intersection theory to be defined in the next section. This will be discussed further in Section~\ref{heightscompare}.




\subsection{Metrized line bundles and analytic spaces}\label{analytic}

We now develop the analytic theory, so that adelic line bundles can also be treated as metrized line bundles. This is a slight generalization of Berkovich's analytic spaces,~\cite{Berk}.

Fix an intermediate field $k_1$ such that $K$ has transcendence degree $d$ over $k_1$, in order to specify a one dimensional set of places. In the base $\Z$ case $k_1$ must be $\Q$. In the base $k$ case, $k_1$ can be any subfield of $K$ which has transcendence degree one over $k$. In base $k$ the Analytic spaces defined in this subsection depend heavily on this choice, but the intersection products and heights which follow, which can be written using analytic language, do not.

\begin{definition}
Define $(\Spec k_1)^{\an}$ to be the topological space consisting of the places of $k_1$, with the discrete topology. Associate to each $\nu\in(\Spec k_1)^{\an}$ a normalized non-trivial absolute value $|\cdot|_{\nu}$ such that the product formula
\[
\prod_{\nu\in(\Spec k_1)^{\an}}|\alpha|_{\nu}=1
\]
holds for all $\alpha\in k_1$.
\end{definition}

$(\Spec\Q)^{\an}$ corresponds to the set $\{\infty,2,3,5,7,\dots\}$ with the usual normalized Archimedean and $p$-adic absolute values. In the base $k$ case, $k_1$ is the function field of a unique projective curve over $k$, and $(\Spec k_1)^{\an}$ corresponds to the closed points of this curve, with the discrete topology instead of the Zariski topology.

Now let $X$ be a $k$ scheme, not necessarily of finite type. The \emph{Berkovich analytification} of $X$ relative to $k_1$ is a disjoint union 
\[
(X/k_1)^{\an}:=\coprod_{\nu\in\Spec(k_1)^{\an}}X_{\nu}^{\an},
\]
defined via the following properties. When $k_1$ has already been established, in particular when $k_1=\Q$ in the base $\Z$ setting, we will often write $X^{\an}$ for $(X/k_1)^{\an}$. 
\begin{itemize}
\item
Let $\Spec A$ be an affine scheme. Then $(\Spec A)_{\nu}^{\an}$, as a set, consists of all multiplicative semi-norms $|\cdot|_x$ on $A$ extending $|\cdot|_{\nu}$ on $k_1$. Given $x\in(\Spec A)^{\an}_{\nu}$ and $f\in A$, we write
\[
|f(x)|_{\nu}:=|f|_x.
\]
If $X$ is covered by open affine schemes $\Spec A$, then $X_{\nu}^{\an}$ is covered by open affinoid sets $(\Spec A)_{\nu}^{\an}$.
\item
$(\Spec A)^{\an}_{\nu}$ is given the weakest topology such that every function ${|\cdot|_x:A\to\R}$ is continuous, and $X^{\an}$ is given the weakest topology such that every $(\Spec A)^{\an}_{\nu}$ is open.
\item
There is a canonical map $X^{\an}\to X$, which takes a semi-norm $|\cdot|_x$ to the prime ideal defined by its kernel.
\item
A map of $k$-schemes $f:X\to Y$ has an analytification $f^{\an}:X^{\an}\to Y^{\an}$.
\end{itemize}

The properties of $X^{\an}_{\nu}$ are established in~\cite{Berk}, and~\cite{yz2} extends this to the disjoint union over all $\nu$.

\begin{remark}
$\Spec(A)^{\an}$ is not the same as the Berkovich spectrum of $A$, typically denoted $\mathcal M(A)$. For example $\mathcal M(\Z)$ contains distinct points for every choice of normalization of each absolute value. If $X$ is of finite type over $k_1$ and $\nu$ is non-archimedean, however, $X^{\an}_{\nu}$ is the usual Berkovich analytification of the base change of $X_{k_{1,\nu}}$.
\end{remark}

As an important example, suppose $X$ is an integral scheme with function field $F$. Then $\Spec F^{\an}$ consists of the fiber over the generic point of $X$ under $X^{\an}\to X$, which we call the \emph{generic points} of $X^{\an}$. As is true for schemes, we have:

\begin{lemma}\label{berkgenericpt}
$\Spec F^{\an}$ is dense in $X^{\an}$.
\end{lemma}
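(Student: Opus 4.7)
The plan is to reduce to density within a single affinoid $(\Spec A)^{\an}_\nu$ and there to deform any semi-norm with nontrivial kernel to a genuine norm on $F$ via a valuation-theoretic construction.

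First I would reduce to the affine case. Since $(\Spec k_1)^{\an}$ carries the discrete topology, $X^{\an}$ decomposes as the disjoint union of clopen pieces $X^{\an}_\nu$, and $\Spec F^{\an}$ splits compatibly, so it suffices to fix $\nu$ and show $\Spec F^{\an}_\nu$ is dense in $X^{\an}_\nu$. The affinoids $(\Spec A)^{\an}_\nu$, for affine opens $\Spec A \subset X$, cover $X^{\an}_\nu$, and since $X$ is integral every nonempty affine open has function field $F$, so $(\Spec A)^{\an}_\nu$ already contains $\Spec F^{\an}_\nu$ as the fiber over the generic point. The question therefore reduces to density of $\Spec F^{\an}_\nu$ inside each $(\Spec A)^{\an}_\nu$.

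Fix a basic open $U = \bigcap_{i=1}^n \{y : a_i < |f_i(y)|_\nu < b_i\}$ containing some point $x$, and let $\mathfrak{p} = \ker |\cdot|_x$. If $\mathfrak{p} = 0$ we are done, so assume $\mathfrak{p} \neq 0$. The semi-norm descends to a norm on $A/\mathfrak{p}$ and extends uniquely to an absolute value $|\cdot|_{\bar x}$ on $\kappa(\mathfrak{p}) = \operatorname{Frac}(A/\mathfrak{p})$. By Zorn's lemma I would choose a valuation ring $R \subset F$ dominating $A_\mathfrak{p}$ with $\mathfrak{m}_R \cap A_\mathfrak{p} = \mathfrak{p} A_\mathfrak{p}$, so that $\kappa(\mathfrak{p}) \hookrightarrow R/\mathfrak{m}_R$, and extend $|\cdot|_{\bar x}$ to an absolute value on $R/\mathfrak{m}_R$. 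Passing to a rank-one specialization of the valuation on $R$, I obtain $v : F^\times \to \R$; combining $|\cdot|_{\bar x}$ with the exponential weight $e^{-t\, v(\cdot)}$ then defines, for each $t > 0$, a multiplicative norm $|\cdot|_{y_t}$ on $F$ extending $|\cdot|_\nu$. On $A$: if $f \notin \mathfrak{p}$ then $v(f) = 0$ and $|f|_{y_t} = |f|_x$, independent of $t$; if $f \in \mathfrak{p}$ then $v(f) > 0$ and $|f|_{y_t} \to 0$ as $t \to \infty$. Since $U$ constrains only the finitely many $f_i$, for $t$ large enough $y_t$ lies in $U \cap \Spec F^{\an}_\nu$, giving the desired density.

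The main technical obstacle is defining $|\cdot|_{y_t}$ unambiguously when the value group of $R$ is not cyclic, so that no uniformizer or ``leading term'' is canonically available. This is handled by passing to the graded ring associated to the $v$-filtration of $R$, in which every element of $F^\times$ has a well-defined leading term in a graded piece of $R/\mathfrak{m}_R$; multiplicativity and nondegeneracy of $|\cdot|_{y_t}$ then reduce to the fact that this associated graded is a domain. The existence of dominating valuation rings and the extension of absolute values to residue-field extensions are both classical.
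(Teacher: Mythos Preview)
Your reduction to a single affinoid $(\Spec A)^{\an}_\nu$ is fine, and the picture of approaching $x$ along a one-parameter family governed by a valuation centred at $\mathfrak p=\ker|\cdot|_x$ is the right intuition. But the function $|\cdot|_{y_t}$ you construct is merely multiplicative; for fixed $t>0$ it does \emph{not} satisfy the ultrametric inequality, so $y_t$ is not a point of $(\Spec A)^{\an}_\nu$ at all.

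Concretely, take $f,g\in A$ with $v(f)=v(g)=\gamma$ and $\mathrm{in}(f)+\mathrm{in}(g)=0$ in the graded piece, so that $v(f+g)=\gamma'>\gamma$. Your formula gives $|f+g|_{y_t}=\tilde w(\mathrm{in}(f+g))\,e^{-t\gamma'}$, which must be bounded by $|f|_{y_t}=\tilde w(\mathrm{in}(f))\,e^{-t\gamma}$. But as $f+g$ ranges over elements of $v$-value $\gamma'$, the quantity $\tilde w(\mathrm{in}(f+g))$ is unbounded whenever $|\cdot|_\nu$ is nontrivial: already for $c\in k_1\subset A$ one finds $|c|_\nu$ arbitrarily large, and then e.g.\ $f=\pi$, $g=-\pi+c\pi^2$ (with $v(\pi)=1$) gives $|f+g|_{y_t}=|c|_\nu e^{-2t}>e^{-t}=|f|_{y_t}$ once $|c|_\nu>e^{t}$. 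What you have really written down is the rank-two composite of $v$ with $|\cdot|_{\bar x}$, pushed into $\R_{>0}$ via $(\gamma,\delta)\mapsto \delta\,e^{-t\gamma}$; that map does not respect the lexicographic order, which is precisely why the triangle inequality fails. At the archimedean place in base $\Z$ the construction fares even worse, since no such valuation-theoretic formula can produce an archimedean absolute value on $F$. The standard repair replaces the leading-term formula by a \emph{maximum} over all terms --- a Gauss norm --- but that requires polynomial coordinates, not merely an abstract valuation $v$.

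The paper takes a much shorter route: it extends $X\to\Spec K$ to a finite-type morphism $\cX\to\cV$ of $k_1$-varieties, so that each fibre $\cX^{\an}_\nu$ is the ordinary Berkovich analytification of a finite-type scheme over the complete field $(k_1)_\nu$, and then simply invokes the classical density of the Zariski-generic locus there. Since $(\Spec F)^{\an}_\nu\subset X^{\an}_\nu\subset\cX^{\an}_\nu$, this suffices.
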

\begin{proof}
Now extend $X\to\Spec K$ to a finite type morphism $\cX\to\cV$ of $k_1$ varieties such that the function field of $\cV$ is $K$. Taking the analytification, we have injections
\[
(\Spec F)^{\an}\hooklongrightarrow X^{\an}\hooklongrightarrow\cX^{\an}
\]
which consist of disjoint unions of injections
\[
(\Spec F)^{\an}_{\nu}\hooklongrightarrow X^{\an}_{\nu}\hooklongrightarrow \cX^{\an}_{\nu}
\]
at every place $\nu\in(\Spec k_1)^{\an}$. But now $\cX_{\nu}^{\an}=\cX_{(k_1)_{\nu}}^{\an}$ is the usual Berkovich analytification of $\cX_{(k_1)_{\nu}}$ over $(k_1)_{\nu}$, and so $\Spec F_{\nu}^{\an}$ is dense in $\cX_{\nu}^{\an}$, which completes the proof.
\end{proof}

We can extend our definition to projective arithmetic models $\cX$ for $X$ by specifying that
\[
(\cX/k_1)^{\an}:=\left(\cX_{k_1}/k_1\right)^{\an}.
\]
by the previous lemma, $X^{\an}$ is dense in $\cX^{\an}$.

Now suppose $L$ is a line bundle on $X$. We write $L^{\an}$, the \emph{analytification} of $L$, to denote the line bundle on $X^{\an}$ whose fiber $L^{\an}(x)$ for every $x\in X^{\an}$ is the same as that of $L$ over the image of $x$ under $X^{\an}\to X$. A \emph{continuous metric} on $L^{\an}$ consists of a norm $||\cdot||_x$ on $L^{\an}(x)$ for every $x\in X^{\an}$ such that 
\begin{itemize}
\item $||f\ell||_x=|f|_x\cdot||\ell||_x$ for every $\ell\in L^{\an}(x)$ and $f$ in the residue field of $x\in X^{\an}$.
\item
For every rational section $s$ of $L$ defined on an open set $U\subset X$, the function $x\mapsto||s(x)||_x$ is continuous on $U^{\an}$.
\end{itemize}
We denote a \emph{continuously metrized line bundle} as $\overline L:=(L,||\cdot||)$, where $||\cdot||$ is a continuous metric on $L^{\an}$. Write $\widehat\Pic(X^{\an})_{\Cont}$ for the group of isometry classes of continuously metrized line bundles on $X$.

Given a section $s$ of a continuously metrized line bundle $\overline L$ on $X$ with divisor $D=\div(s)$, the function $g(x):=-\log||s(x)||_x$ defines a \emph{Green's function} for $D^{an}$, meaning that $g$ is continuous outside of $D^{\an}$ and has a logarithmic singularity on $D^{\an}$ on $X^{\an}$. If instead we start with a divisor $D$ on $X$ and a Green's function $g$ on $D^{\an}$, the function $e^{-g}$ defines a metric on $\cO(D)$. Such a pair $\overline D:=(D,g)$ is called an arithmetic divisor. The group of such is denoted $\widehat\Div(X^{\an})_{\Cont}$, and the above construction induces a homomorphism
\[
\widehat\Div(X^{\an})_{\Cont}\longrightarrow\widehat\Pic(X^{\an})_{\Cont}.
\]

Let $\cX\to\cB$ be a projective arithmetic model for $X/K$, and  let $\overline\calL$ be a (Hermitian) line bundle on $\cX$ with generic fiber $L=\overline\calL_K$. As described in~\cite{carney,yz}, $\overline\calL$ induces a metric of $\overline\calL^{\an}$, which in turn induces a \emph{model metric} on $L^{\an}$. Thus we have a map
\[
\widehat\Pic(\cX)\to\widehat\Pic(\cX^{\an})_{\Cont}\to\widehat\Pic(X^{\an})_{\Cont}.
\]
Since $X^{\an}$ is dense in $\cX^{\an}$, this is an injection.

We can now tie metrized line bundles and adelic line bundles together:

\begin{lemma}
There exist natural injections
\[
\widehat\Pic(X)_{\Mod}\hooklongrightarrow\widehat\Pic(X)_{\Int}\hooklongrightarrow\widehat\Pic(X)_{\Cont}\hooklongrightarrow\widehat\Pic(X^{\an})_{\Cont}
\]
\end{lemma}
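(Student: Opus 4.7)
The plan is to verify the three injections in sequence, leveraging Lemma~\ref{completion} and the model-by-model injection $\widehat\Pic(\cX) \hookrightarrow \widehat\Pic(X^\an)_\Cont$ already established in the paragraphs preceding the statement.

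The first injection $\widehat\Pic(X)_\Mod \hookrightarrow \widehat\Pic(X)_\Int$ is the easiest. On any projective arithmetic model $\cX$, every (Hermitian) line bundle is the difference of two ample line bundles (by Serre's theorem in the base $k$ case, and its Gillet--Soul\'e analogue in the base $\Z$ case), hence in particular a difference of nef ones. Taking $\Q$-coefficients and limits, first over projective models and then over open arithmetic models $\cU \to \cV$ for $X$, shows that the image of $\widehat\Pic(X)_\Mod$ lies inside $\widehat\Pic(X)_\Int$. Injectivity then follows from Lemma~\ref{completion}, taken in the limit, since $\widehat\Pic(X)_\Int$ is by definition a subgroup of $\widehat\Pic(X)_\Cont$; the middle inclusion is therefore tautological.

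For the third injection $\widehat\Pic(X)_\Cont \hookrightarrow \widehat\Pic(X^\an)_\Cont$, I would construct the map by extending the model-level injection by continuity. An element of $\widehat\Pic(X)_\Cont$ is represented on some open model $\cU \to \cV$ by a convergent sequence $\{\overline\calL_i\}$ on projective models $\cX_i$, together with compatible connecting rational sections $\ell_{i,j}$ of $\calL_j \otimes \pi_{i,j}^*\calL_i^{-1}$. Each $\overline\calL_i$ induces a model metric $\|\cdot\|_i$ on the common generic fiber $L^\an$. The defining inequality $-\epsilon\, \pi_{1,j}^*\overline\cD < \div(\ell_{i,j}) < \epsilon\, \pi_{1,j}^*\overline\cD$ translates, via the correspondence $\overline\cD \leftrightarrow -\log\|\cdot\|$ between arithmetic divisors and metrics, into the uniform bound $|\log\|\cdot\|_i - \log\|\cdot\|_j| \le \epsilon\, g_{\overline\cD}$ on $X^\an$. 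Since $X^\an$ lies inside $\cU^\an$ and is disjoint from the support of $\overline\cD$, the Green's function $g_{\overline\cD}$ is continuous and locally bounded there, so $\{\|\cdot\|_i\}$ is Cauchy in the sup-norm on compacta and has a continuous limit, defining an element of $\widehat\Pic(X^\an)_\Cont$ independent of the representing sequence. Injectivity follows: if the limit metric is trivial and $L \cong \cO_X$, then the model-level injections $\widehat\Pic(\cX_i) \hookrightarrow \widehat\Pic(X^\an)_\Cont$, combined with Lemma~\ref{completion} applied at the open-model level, force the adelic class to vanish.

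The main obstacle I anticipate is this construction step in the third injection: carefully converting the purely divisorial convergence condition of the adelic setup (bounded multiples of a fixed boundary divisor $\overline\cD$) into genuine uniform convergence of continuous metrics on $X^\an$. The essential point is that although $g_{\overline\cD}$ is singular on $(\cX \setminus \cU)^\an$, the analytic variety $X^\an$ is disjoint from this singular locus, so $g_{\overline\cD}$ is finite there and controls metric differences. Once this passage between arithmetic divisors and continuous metrics is made rigorous, both well-definedness and injectivity are formal consequences of Lemma~\ref{completion} and the density of $X^\an$ in $\cX^\an$ noted earlier.
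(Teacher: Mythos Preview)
Your proposal is correct and follows essentially the same approach as the paper: reduce to an open model $\cU$, invoke Lemma~\ref{completion} for the first two injections, and for the third translate the boundary-divisor convergence condition into the weighted bound $|\log\|\cdot\|_i - \log\|\cdot\|_j| \le \epsilon\, g_{\overline\cD}$ on metrics so that the model-metric map extends by completeness. The paper's version is terser and works on $\cU^{\an}$ rather than $X^{\an}$, but the content is the same; your explicit check that model classes are differences of ample (hence nef) classes is a detail the paper glosses over, and your injectivity argument for the third map should be phrased purely in terms of this topology identification rather than invoking the individual model-level injections.
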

\begin{proof}
It suffices to show that the equivalent morphisms exist for an open arithmetic model $\cU$ of $X$. The first two injections are already established by Lemma~\ref{completion}. We also have an injection $\widehat\Pic(\cU)_{\Mod}\hookrightarrow\widehat\Pic(\cU^{\an})_{\Cont}$ given by the induced model metrics. Fix a strictly effective divisor $\overline\cD$ giving the topology on $\widehat\Pic(\cU)_{\Mod}$. Then the image of $B(\epsilon,0)\subset\widehat\Pic(\cU)_{\Mod}$ in $\widehat\Pic(\cU^{\an})_{\Cont}$ is the space of continuous metrics $||\cdot||$ on $\cO_{\cU}$ such that $-\log||1||$ is bounded by $\epsilon g_{\overline\cD}$, where $g_{\overline\cD}$ is the Green's function corresponding to $\overline\cD$. Then $\widehat\Pic(\cU^{\an})_{\Cont}$ is complete with respect to the topology induced by this image. Thus, the map
\[
\widehat\Pic(\cU)_{\Mod}\hooklongrightarrow\widehat\Pic(\cU^{\an})_{\Cont}
\]
extends to the completion. 
\end{proof}

From now on, we will interchangeably use the terms \emph{integrable adelic line bundles} and \emph{integrable metrized line bundles} to refer to both $\widehat\Pic(X)_{\Int}$ and its image in $\widehat\Pic(X^{\an})$.




\section{Intersections of adelic line bundles}\label{intersectionssec}

\subsection{Vector-valued intersections}
For the following subsection, let $X\to Y$ be a flat morphism of projective $K$ varieties of pure relative dimension $n$. Often we will consider $Y=\Spec K$, but the following holds in general. We can find open arithmetic models with compatible projective flat morphisms 
\[
\cU\longrightarrow\cW\longrightarrow\cV
\]
for $X\to Y\to\Spec K$.

Fix a projective model $\cY$ for $\cW$. Applying Nagata's compactification Theorem~\cite{Nagata} (see~\cite{conradnagata} for a modern treatment), we can find projective model $\cX$ for $\cU$ with a map $\cX\to\cY$ extending $\cU\to\cW$. By Raynaud-Gruson's flattening theorem~\cite{raynaudflattening} we may further assume $\cX\to\cY$ is flat, after replacing $\cY$ by a blow up (outside of $\cW$) and $\cX$ by its strict transform. Thus we have a projective arithmetic model $\cX\to \cY$ extending the open arithmetic model $\cU\to\cW$.

Deligne defines a multilinear product 
\[
\widehat\Pic(\cX)^{n+1}\longrightarrow\widehat\Pic(\cY)
\]
\[
\left(\overline\calL_1,\cdots,\overline\calL_{n+1}\right)\mapsto\langle\overline\calL_1,\cdots,\overline\calL_{n+1}\rangle(\cX/\cY).
\]
The construction and properties of this product are given in~\cite{Deligne}, with the base $k$ case (i.e a product for line bundles) in section 8.1, and base $\Z$ case (for line bundles with Hermitian metrics) in section 8.3. Additional details can be found in~\cite{zhangdeligne}. 

\begin{lemma}
Deligne's pairing induces a multilinear product
\[
\langle\cdot,\dots,\cdot\rangle(X/Y):\widehat\Pic(X)_{\Int}^{n+1}\longrightarrow\widehat\Pic(Y)_{\Int}.
\]
\end{lemma}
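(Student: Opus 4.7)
The plan is to first define the pairing on the dense subgroup $\widehat\Pic(X)_{\Mod}^{n+1}$ by pulling back to a common projective model and applying Deligne's classical/arithmetic pairing, then extend by continuity to $\widehat\Pic(X)_{\Cont}^{n+1}$, and finally restrict to the integrable subgroup. By multilinearity and the decomposition $\widehat\Pic(X)_{\Int} = \widehat\Pic(X)_{\Nef} - \widehat\Pic(X)_{\Nef}$, it suffices to treat the case in which every input is nef.

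First I would fix compatible projective arithmetic models $\cX \to \cY$ for $\cU \to \cW$, exactly as in the construction preceding the lemma (combining Nagata compactification with Raynaud--Gruson flattening). Given $\overline L_1, \ldots, \overline L_{n+1} \in \widehat\Pic(X)_{\Nef}$, represent each $\overline L_i$ by a convergent system $(\overline\calL_{i,j})_{j \ge 1}$ of nef (Hermitian) line bundles on a cofinal system of projective arithmetic models $\cX_j$; after replacing each $\cX_j$ by a common dominating model and (using flattening again) by arranging $\cX_j \to \cY_j$ to be flat and projective, we may assume all $\overline\calL_{i,j}$ are realized on the same $\cX_j$ for each $j$. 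Applying Deligne's pairing over $\cX_j/\cY_j$ produces a sequence
\[
\langle \overline\calL_{1,j}, \ldots, \overline\calL_{n+1,j} \rangle(\cX_j/\cY_j) \in \widehat\Pic(\cY_j).
\]
The candidate intersection is the class of this sequence in $\widehat\Pic(\cW)_{\Cont}$, and finally in $\widehat\Pic(Y)_{\Cont}$.

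The heart of the argument is convergence and well-definedness of this sequence. For convergence, I would use multilinearity: for $j' \ge j$ and a dominating model, write $\overline\calL_{i,j'} - \pi^* \overline\calL_{i,j} = \overline\calE_{i,j,j'}$, which by definition of convergence satisfies $-\epsilon_{j,j'} \pi^*\overline\cD < \overline\calE_{i,j,j'} < \epsilon_{j,j'} \pi^* \overline\cD$ with $\epsilon_{j,j'} \to 0$. Expanding
\[
\langle \overline\calL_{1,j'},\ldots,\overline\calL_{n+1,j'}\rangle - \pi^* \langle \overline\calL_{1,j},\ldots,\overline\calL_{n+1,j}\rangle
\]
multi-linearly, every summand contains at least one $\overline\calE$ factor while the remaining factors are nef. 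The key quantitative estimate is then that Deligne's pairing takes a factor bounded by $\pm\epsilon\pi^*\overline\cD$ against nef factors to a class on $\cY$ bounded by $\pm\epsilon\, C\, \overline\cD_{\cY}$ for some effective $\overline\cD_{\cY}$ supported away from $\cW$ and constant $C$ depending on the nef factors (but not on $j$). I would derive this from the nefness of Deligne's pairing of nef (Hermitian) line bundles---a standard fact from Zhang's and Moriwaki's work used in the proof of Lemma~\ref{completion}---by pushing the effectivity inequalities through each slot in turn. This shows the sequence is Cauchy in $\widehat\Pic(\cW)_{\Cont}$, hence defines an element of $\widehat\Pic(Y)_{\Cont}$. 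The same estimate, applied to two different representing sequences, shows independence of the choice of representatives (and of the choice of models $\cX \to \cY$, by comparing both to a common dominating model).

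Finally, the image lies in $\widehat\Pic(Y)_{\Nef}$ when all inputs are nef because each term of the defining sequence is nef on $\cY_j$; extending by bilinearity to differences of nef classes yields a multilinear map landing in $\widehat\Pic(Y)_{\Int}$, as desired. The main obstacle I anticipate is the quantitative continuity estimate in the paragraph above: making precise how nefness of the remaining slots converts the pointwise bound $\pm\epsilon \pi^*\overline\cD$ on one slot into a bound of the same form (up to a model-independent constant) on the Deligne output. Once that inequality is secured, multilinearity and completeness package everything together cleanly.
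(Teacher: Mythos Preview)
Your approach is essentially the same as the paper's: reduce to nef inputs, represent by sequences on projective models, expand the difference multilinearly, and bound each summand using nefness of the other slots. The paper resolves precisely the obstacle you flag at the end, and the resolution is cleaner than you anticipate. The boundary divisor $\overline\cD$ is chosen on the \emph{base} model $\cY_0$, so the bounding class in each slot is $\pi^*\cO(\overline\cD)$. Once one slot of the Deligne pairing is a pullback from the base, the projection formula collapses the output to $\deg\bigl(L_0\cdots \widehat{L_i}\cdots L_n\bigr)\cdot\cO(\overline\cD)$, where the degree is computed on the generic fiber $X$. Since every model $\cX_j$ has the same generic fiber, these degrees are independent of $j$; that is your ``model-independent constant $C$'' for free, with no further estimate needed.

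One small correction to your framing: the pairing does \emph{not} extend by continuity to all of $\widehat\Pic(X)_{\Cont}^{n+1}$---the paper explicitly remarks that the limit may diverge for general convergent sequences. So you should not present the argument as ``extend to $\Cont$, then restrict to $\Int$''; rather, the Cauchy estimate only works because the remaining slots are nef, and integrability is essential from the start, not an afterthought.
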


\begin{remark}
While Deligne's product is defined on every $\widehat\Pic(\cX)$, here we only use the integral subset of $\widehat\Pic(X)_{\Cont}$, as the the limit of the pairing over a convergent sequence of line bundles may not converge in general; see~\cite{Z95} and~\cite{moriwakifg2} in the $d=0$ case.
\end{remark}

\begin{proof}
We follow~\cite{yz2}, extending their proof to the more general relative product used here. 

It suffices to show that there is an induced product
\[
\widehat\Pic(\cU)_{\Int}^{n+1}\longrightarrow\widehat\Pic(\cW)_{\Int},
\]
as both $\widehat\Pic(X)_{\Int}$ and $\widehat\Pic(Y)_{\Int}$ are defined as the projective limits of these. By linearity, it suffices to prove this when $\overline\calL_0,\dots,\overline\calL_{n}\in\widehat\Pic(\cU)_{\Int}$ are nef.

For $0\le i\le n$, let $\left\{\overline\calL_{i,m}\right\}_{m\ge0}$ be a sequence of nef (Hermitian) line bundles on projective arithmetic models $\pi_m:\cX_m\to \cY_m$, which converges to $\overline\calL_i$ in $\widehat\Pic(\cU)_{\Int}$. Note that by moving to a dominating model we may assume, as the notation suggests, that $\cX_m\to \cY_m$ does not depend on $i$, and that each $\pi_{m'}$ dominates $\pi_m$ for $m'\ge m$.

Fix a strictly effective divisor $\overline\cD$ on $\cY_0$ with support outside $\cW$, so that $\overline\cD$ defines the topology on $\widehat\Pic(\cW)_{\Cont}$, and $\pi^*\overline\cD  $ defines the topology on $\widehat\Pic(\cU)_{\Cont}$. For every $m'\ge m\ge0$, let $\ell_{i,m,m'}$ be a rational section of $\overline\calL_{i,m}-\overline\calL_{i,m'}$ which restricts to $1$ on $X$. For every $\epsilon>0$ there exists $m_0$ such that when $m'\ge m\ge m_0$, both
\[
\epsilon\overline\pi^*\cD\pm\widehat\div\left(\ell_{i,m,m'}\right)
\]
are strictly effective for every $0\le i\le n$.

By linearity, we have
\begin{multline*}
\langle\overline\calL_{0,m},\cdots,\overline\calL_{n,m}\rangle(\cX_m/\cY_m)-\langle\overline\calL_{0,m'},\cdots,\overline\calL_{n,m'}\rangle(\cX_{m'}/\cY_{m'})\\
=\sum_{i=0}^n\left\langle\overline\calL_{0,m},\cdots,\overline\calL_{i-1,m},\left(\overline\calL_{i,m}-\overline\calL_{i,m'}\right),\overline\calL_{i+1,m'},\cdots,\overline\calL_{n,m'}\right\rangle(\cX_{m'}/\cY_{m'}).
\end{multline*}

Consider the Deligne products on the right-hand side for a fixed $0\le i\le n$. These are generated Zariski-locally by rational sections 
\[
\langle\ell_{0,m},\dots,\ell_{i-1,m},\ell_{i,m,m'},\ell_{i+1,m'},\dots,\ell_{n,m'}\rangle
\]
 chosen so that 
\[
\left(\bigcap_{j<i}\div(\ell_{j,m})\right)\cdot\div(\ell_{i,m,m'})\cdot\left(\bigcap_{j>i}\div(\ell_{j,m'})\right)=\emptyset.
\]
Since $\epsilon\overline\cD\pm\widehat\div\left(\ell_{i,m,m'}\right)$ is strictly effective and each $\overline\calL_{i,m},\overline\calL_{i,m'}$ is nef, for $m,m'\ge m_0$,
\begin{multline*}
\sum_{i=0}^n\left\langle\overline\calL_{0,m},\cdots,\overline\calL_{i-1,m},\left(\overline\calL_{i,m}-\overline\calL_{i,m'}\right),\overline\calL_{i+1,m'},\cdots,\overline\calL_{n,m'}\right\rangle(\cX_{m'}/\cY_{m'})
\\\le
\sum_{i=o}^n\left\langle\overline\calL_{0,m},\cdots,\overline\calL_{i-1,m},\epsilon\pi^*\cO(\overline\cD),\overline\calL_{i+1,m'},\cdots,\overline\calL_{n,m'}\right\rangle(\cX_{m'}/\cY_{m'})
\\=
\epsilon\left(\sum_{i=0}^n\deg\left(\overline\calL_{0,K}\cdots\overline\calL_{i-1,K}\cdot\overline\calL_{i+1,K}\cdots\overline\calL_{n,K}\right)\right)\cO(\overline\cD)
\end{multline*}

Thus
\[
\left\langle\overline\calL_0,\dots,\overline\calL_n\right\rangle(X/Y):=\lim_{m\to\infty}\left\langle\overline\calL_{0,m},\dots,\overline\calL_{n,m}\right\rangle(\cX_{m}/\cY_{m})
\]
is well defined on $\widehat\Pic(\cW)_{\Int}$, as we've shown that this sequence is Cauchy.
\end{proof}

Suppose we have a sequence of projective, flat morphisms 
\[
X\overset{\pi}\longrightarrow Y\overset{\rho}\longrightarrow Z\]
of projective $K$-varieties, such that $X/Y$ has pure relative dimension $n$ and $Y/Z$ has pure relative dimension $m$. Let $\oL_0,\dots,\oL_n\in\widehat\Pic(X)_{\Int}$ and $\oH_1,\dots,\oH_m\in\widehat\Pic(Y)_{\Int}$. By the projection formula for the Deligne product
\[
\left\langle\left\langle\oL_0,\dots,\oL_n\right\rangle(X/Y),\oH_1,\dots,\oH_m\right\rangle(Y/Z)=
\left\langle\oL_0,\dots,\oL_n,\pi^*\oH_1,\dots,\pi^*\oH_m\right\rangle(X/Z).
\]

When the relative context is clear, for example when $X$ is a projective $K$-variety of dimension $n$, we will often simply write
\[
\oL_0\cdots\oL_n:=\left\langle\oL_0,\dots,\oL_n\right\rangle(X/\Spec K).
\]

\subsection{$\R$-valued intersections}
Now suppose $\cB$ is a projective arithmetic model for $K$. We have an intersection product 
\[
\widehat\Pic(\cB)^{d+1}\longrightarrow\R,
\]
defined classically in the base $k$ case and by Gillet-Soul\'e~\cite{gilletsoule,gilletsoule2}, and further generalized by Zhang~\cite{Z95} and Moriwaki~\cite{Moriwaki}, in the base $\Z$ case. As with the Deligne pairing, this extends to limits, provided we only consider integrable line bundles.

\begin{lemma}
The intersection product extends to a unique continuous multilinear homomorphism $\widehat\Pic(K)_{\Int}^{d+1}\to\R$.
\end{lemma}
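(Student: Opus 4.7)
The plan is to mirror the Cauchy-sequence argument used in the preceding lemma for the Deligne pairing, replacing the output object $\widehat\Pic(\cW)_{\Int}$ by $\R$ and using the classical/Gillet-Soul\'e intersection on a projective arithmetic model $\cB$ for $K$ as the building block. The existence of that $(d+1)$-fold intersection $\widehat\Pic(\cB)^{d+1}\to\R$, and its compatibility with pullback along dominating maps $\cB'\to\cB$ of projective arithmetic models, is classical in base $k$ and due to Gillet--Soul\'e (with Zhang--Moriwaki extensions) in base $\Z$. Uniqueness is automatic once we have a continuous extension, since $\widehat\Pic(\cB)_{\Q}$ is dense in $\widehat\Pic(K)_{\Int}$ by construction.

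For existence, I would take integrable $\oH_0,\dots,\oH_d\in\widehat\Pic(K)_{\Int}$. By multilinearity and the nef-minus-nef decomposition of integrable bundles, it suffices to treat the case where each $\oH_i$ is nef and represented by a convergent sequence $\{\overline\calH_{i,m}\}_{m\ge 0}$ of nef (Hermitian) $\Q$-line bundles on projective arithmetic models $\cB_m$ for $K$. Passing to dominating models, I may assume that all $\overline\calH_{i,m}$ live on the same $\cB_m$ for each $m$, and that $\cB_{m'}$ dominates $\cB_m$ for $m'\ge m$. Fix a strictly effective divisor $\overline\cD$ on $\cB_0$ with support off a chosen open arithmetic model $\cV$ for $K$, defining the topology on $\widehat\Pic(K)_{\Cont}$, and, for $m'\ge m$, rational sections $\ell_{i,m,m'}$ of $\overline\calH_{i,m}-\overline\calH_{i,m'}$ restricting to $1$ on $\Spec K$. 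Convergence means that for every $\epsilon>0$ there is $m_0$ with $\epsilon\,\pi^*\overline\cD\pm\widehat\div(\ell_{i,m,m'})$ strictly effective on $\cB_{m'}$ for all $0\le i\le d$ and all $m'\ge m\ge m_0$.

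The next step is to telescope:
\[
\overline\calH_{0,m}\cdots\overline\calH_{d,m}-\overline\calH_{0,m'}\cdots\overline\calH_{d,m'}=\sum_{i=0}^{d}\overline\calH_{0,m}\cdots\overline\calH_{i-1,m}\cdot(\overline\calH_{i,m}-\overline\calH_{i,m'})\cdot\overline\calH_{i+1,m'}\cdots\overline\calH_{d,m'},
\]
and to bound each summand. Here the nefness of the remaining $d$ factors together with strict effectivity of $\epsilon\,\pi^*\overline\cD\pm\widehat\div(\ell_{i,m,m'})$ forces
\[
\left|\overline\calH_{0,m}\cdots(\overline\calH_{i,m}-\overline\calH_{i,m'})\cdots\overline\calH_{d,m'}\right|\le\epsilon\,\overline\calH_{0,m}\cdots\widehat{\pi^*\cO(\overline\cD)}\cdots\overline\calH_{d,m'}
\]
on $\cB_{m'}$, and the right-hand side is an intersection of $d$ nef bundles (drawn from the convergent sequences, hence uniformly bounded in an appropriate sense) against the fixed bundle $\pi^*\cO(\overline\cD)$, so it is bounded by a constant $C$ independent of $m,m'$. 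This gives $|\overline\calH_{0,m}\cdots\overline\calH_{d,m}-\overline\calH_{0,m'}\cdots\overline\calH_{d,m'}|\le(d+1)C\epsilon$, establishing the Cauchy property. The limit is the desired value of $\oH_0\cdots\oH_d$, and multilinearity, continuity, and independence of the chosen representing sequences follow by standard diagonal arguments.

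The principal obstacle is the uniform boundedness of the intermediate intersection $\overline\calH_{0,m}\cdots\pi^*\cO(\overline\cD)\cdots\overline\calH_{d,m'}$ as $m,m'$ vary: one must check that convergence of the sequences $\{\overline\calH_{i,m}\}$ in $\widehat\Pic(K)_{\Cont}$ implies a uniform bound of the form $\overline\calH_{i,m}\le\overline\calH_{i,0}+\epsilon_0\pi^*\overline\cD$ on the projective model, so that all intersection numbers occurring as bounding terms are controlled by the fixed data $\overline\calH_{i,0}$ and $\overline\cD$. This is the same type of estimate that underlies the preceding lemma, and once it is in hand the rest of the argument is a direct transcription.
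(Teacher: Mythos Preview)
Your telescoping setup and the reduction to nef sequences are exactly what the paper does, but the step you flag as the ``principal obstacle'' is a genuine gap, and your proposed resolution is not correct. In the preceding lemma the bound on the error terms was uniform for a structural reason: after replacing $\overline\calL_{i,m}-\overline\calL_{i,m'}$ by $\epsilon\pi^*\cO(\overline\cD)$, the projection formula collapses the remaining Deligne pairing to the degree on the generic fiber, which is the same for every $m$. Here there is no relative structure and no generic fiber to fall back on: you are already computing a top-dimensional number on $\cB_m$, so the remaining $d$-fold product $\overline\calH_{0,m}\cdots\cO(\overline\cD)\cdots\overline\calH_{d,m'}$ genuinely varies with $m,m'$. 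Bounding each $\overline\calH_{i,m}$ by $\overline\calH_{i,0}+\epsilon_0\overline\cD$ and trying to push through monotonicity requires the other factors, including the $\overline\cD$ slot, to be nef, which is not assumed.

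The paper handles this not by a uniform bound but by an induction that manufactures the boundedness along the way. For each subset $I\subset\{0,\dots,d\}$ set
\[
\alpha_{I,m}:=\overline\cD^{\,d+1-|I|}\cdot\prod_{i\in I}\overline\calL_{i,m},
\]
and prove $\{\alpha_{I,m}\}_m$ is Cauchy by induction on $|I|$. The base case $I=\emptyset$ is constant. For the inductive step, the same telescoping you wrote expresses $\alpha_{I,m}-\alpha_{I,m'}$ as $\epsilon$ times a linear combination of terms that, after using $\overline\calL_{i,m'}\le\overline\calL_{i,m}+\epsilon\overline\cD$, are of the form $\alpha_{I',m}$ with $|I'|<|I|$; these are already known to converge, hence are bounded. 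Thus convergence for the full set $I=\{0,\dots,d\}$ follows. This inductive structure is the missing idea in your sketch.
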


\begin{proof}
We proceed similarly to the previous lemma.
By the definition of $\widehat\Pic(K)_{\Int}$, it suffices to prove that the product extends to $\widehat\Pic(\cV)$, for a fixed open arithmetic model $\cV$ for $K$, and to suppose that $\overline\calL_0,\dots,\overline\calL_d\in\widehat\Pic(\cV)_{\Int}$ are all nef, and represented by sequences $\{\overline\calL_{i,m}\}_{m\ge0}$ of nef (Hermitian) line bundles on projective models $\cB_m$ for $\cU$. Fix a strictly effective $\overline\cD$ on $\cB_0$ with support on $\cB_0\backslash\cV$, and fix $\epsilon>0$. Then there exists $m_0$ such that 
\[
\epsilon\overline\cD\pm\widehat\div(\ell_{i,m,m'})
\]
is strictly effective for all $0\le i\le d$ when $m'\ge m\ge m_0$.

Let $I$ be a subset of $\{0,d\}$. We show that for fixed $I$, the sequence $\{\alpha_{I,m}\}_{m\ge0}$ defined by 
\[
\alpha_{I,m}:=\overline\cD^{d+1-|I|}\prod_{i\in I}\ocL_{i,m}
\]
converges, by induction on $|I|$. The case $I=\emptyset$ is clear, as the sequence is constant, and the lemma is proven when $|I|$ is maximal. Fix some $I$ and $m'\ge m\ge m_0$, and compute as follows:
\begin{multline*}
\left|\alpha_{I,m}-\alpha_{I,m'}\right|=
\left|\overline\cD^{d+1-|I|}\cdot\prod_{i\in I}\overline\calL_{i,m}-\overline\cD^{d+1-|I|}\cdot\prod_{i\in I}\overline\calL_{i,m'}\right|\\
=\left|\overline\cD^{d+1-|I|}\cdot\sum_{j\in I}\left(\prod_{i\in I,i<j}\overline\calL_{i,m}\cdot\left(\overline\calL_{j,m}-\overline\calL_{j,m'}\right)\cdot\prod_{i\in I,i>j}\overline\calL_{i,m'}\right)\right|
\end{multline*}
Since every $\overline\calL_{i,m},\overline\calL_{i,m'}$ is nef, this is bounded by
\begin{multline*}
\epsilon\left|\overline\cD^{d+2-|I|}\cdot\sum_{j\in I}\left(\prod_{i\in I,i<j}\overline\calL_{i,m}\cdot\prod_{i\in I,i>j}\overline\calL_{i,m'}\right)\right|\\
\le
\epsilon\left|\overline\cD^{d+2-|I|}\cdot\sum_{j\in I}\left(\prod_{i\in I,i<j}\overline\calL_{i,m}\cdot\prod_{i\in I,i>j}\overline\calL_{i,m}+\epsilon\overline\cD\right)\right|.
\end{multline*}
This last term is a linear combination of terms $\alpha_{I',m}$, with $|I'|<|I|$, and thus by induction, $\alpha_{I,m}$ is Cauchy.
\end{proof}

Fix a projective model $\pi:\cX\to\cB$. Let $\overline\calL_0,\dots,\overline\calL_n\in\widehat\Pic(\cX)$ and $\overline\cH_1,\dots,\overline\cH_d\in\widehat\Pic(\cB)$
By the projection formula for the Deligne product, 
\[
\left(\overline\calL_0\cdots\overline\calL_n\right)\cdot\overline\cH_1\cdots\overline\cH_d=\overline\calL_0\cdots\overline\calL_n\cdot\pi^*\overline\cH_1\cdots\pi^*\overline\cH_d\in\R,
\]
where the right-hand side is the classical intersection product on $\cX/k$ in the base $k$ case, or the Arakelov intersection product on $\cX/\Z$ in the base $\Z$ case. 

With both intersection products now fully defined, recall the definitions of \emph{pseudo-effective}, \emph{numerically trivial}, \emph{arithmetically positive}, and \emph{$\oL$-bounded} from Definition~\ref{positivitydefs}. Additionally, note that an \emph{arithmetically positive} adelic line bundle is always \emph{nef}, and a product of nef adelic line bundles is always $\ge0$ (\emph{pseudo-effective}).

\section{Flat and admissible metrics}\label{admissible}




In this section we construct canonical metrics for specific subsets of $\Pic(X)_{\Q}$. These are all generalized by the concept of an \emph{admissible metric} in Theorem~\ref{admissibletheorem}, however the proof of that result will require the Hodge Index Theorem, Theorem~\ref{hodgeindex}, in addition to the results of this section.

\subsection{Tate's limiting argument}\label{tateslimiting}
Let $(X,f,L)$ be a polarized dynamical system, i.e. $X$ is a geometrically normal projective $K$-variety, $f:X\to X$ is an endomorphism, $L$ is an ample line bundle on $X$, and $f^*L\isom qL$ for some $q>1$. 

\begin{lemma}\label{tateslimiting}
There exists an adelic line bundle $\oL_f$ extending $L$ in the sense that $L_f:=(\oL_{f})_K= L$, which is both integrable and nef, and such that $f^*\oL_f\cong q\oL_f$.
\end{lemma}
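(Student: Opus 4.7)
The plan is to run Tate's limiting argument directly in $\widehat\Pic(X)_{\Int}$. First I would produce an initial nef extension $\oL_0 \in \widehat\Pic(X)_{\Nef}$ with generic fiber $L$: since $L$ is ample on $X$, one can choose a projective model $\pi : \cX \to \cB$ of $X \to \Spec K$ together with a relatively ample line bundle $\calL$ on $\cX$ extending $L$, equipped in the base $\Z$ case with a smooth positively-curved Hermitian metric. This gives a class in $\widehat\Pic(X)_{\Mod} \subset \widehat\Pic(X)_{\Nef}$.

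Next, using the Nagata compactification plus Raynaud--Gruson flattening argument of Section~\ref{basek1}, I would extend $f$ to a morphism between suitable projective models, after possibly passing to dominating models, so that the iterated pullbacks $(f^n)^*\oL_0$ are realized in $\widehat\Pic(X)_{\Mod}$. Then set
\[
\oL_n := \frac{1}{q^n}(f^n)^*\oL_0.
\]
Each $\oL_n$ is nef, because $f^*$ preserves nefness on models and positive scaling preserves nefness in the limit. A direct computation gives
\[
\oL_{n+1} - \oL_n \;=\; \frac{1}{q^{n+1}}\,(f^n)^*\oM, \qquad \oM := f^*\oL_0 - q\oL_0,
\]
and because $f^*L \isom qL$ the generic fiber of $\oM$ is trivial, so $\oM \in \widehat\Pic(X)_{\Vrt}$.

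The key step is to show $\{\oL_n\}$ is Cauchy. I would arrange the open arithmetic model $\cU \to \cV$ to satisfy $\cX \setminus \cU = \pi^{-1}(\cB \setminus \cV)$, and take a strictly effective divisor $\overline\cE$ on $\cB$ with support $\cB \setminus \cV$; then $\overline\cD := \pi^*\overline\cE$ controls the topology of $\widehat\Pic(\cU)_{\Cont}$. Verticality of $\oM$ yields a two-sided bound $-c\overline\cD \le \oM \le c\overline\cD$ for some $c > 0$, and since $f$ lies over $\Id_{\Spec K}$, the divisor $\overline\cD$ is $f^*$-invariant, so
\[
-\frac{c}{q^{n+1}}\overline\cD \;\le\; \oL_{n+1} - \oL_n \;\le\; \frac{c}{q^{n+1}}\overline\cD.
\]
Setting $\oL_f := \lim_n \oL_n$ then produces the required element of $\widehat\Pic(X)_{\Nef} \subset \widehat\Pic(X)_{\Int}$, extending $L$; the functional equation $f^*\oL_f = q\oL_f$ follows from continuity of $f^*$ applied to the defining sequence, since $f^*\oL_f = \lim_n \tfrac{1}{q^n}(f^{n+1})^*\oL_0 = q \oL_f$.

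The main obstacle is the verticality-implies-boundedness step: one must arrange the models and the topology-defining divisor $\overline\cD$ so that every vertical adelic line bundle is dominated by a scalar multiple of $\overline\cD$, and so that this bound is preserved under $(f^n)^*$. Taking $\overline\cD$ to be a pullback from $\cB$, together with the fact that $f$ fixes $\Spec K$, are the essential ingredients making this estimate work; everything else is a formal consequence of continuity and the universal property of the completion.
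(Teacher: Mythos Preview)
Your approach is essentially the same Tate limiting argument the paper uses, and the overall structure is fine, but the justification of the key Cauchy estimate has a genuine gap.

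You write that ``verticality of $\oM$ yields a two-sided bound $-c\overline\cD \le \oM \le c\overline\cD$'', and later that one must ``arrange the models \dots\ so that every vertical adelic line bundle is dominated by a scalar multiple of $\overline\cD$''. This is false as stated. In the paper's terminology, $\oM \in \widehat\Pic(X)_{\Vrt}$ only means $M_K = \cO_X$, i.e.\ triviality on the generic fiber $X$. Such a line bundle, realized on a projective model $\cX$, can have support on vertical fibers lying over points of $\cV$, not just over the boundary $\cB\setminus\cV$; adding any multiple of $\overline\cD = \pi^*\overline\cE$ (whose support is contained in $\pi^{-1}(\cB\setminus\cV)$) cannot dominate such components. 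So verticality alone never gives the bound you need, and no choice of $\overline\cD$ will make ``every vertical adelic line bundle'' bounded.

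What actually makes the argument work, and what the paper arranges explicitly, is the stronger statement that $\oM$ is trivial on the \emph{open model} $\cU = \cX_{\cV}$, not merely on $X$. This requires shrinking $\cV$ so that both $f$ extends to $f_{\cV}:\cX_{\cV}\to\cX_{\cV}$ \emph{and} the isomorphism $f_{\cV}^*\calL_{\cV}\cong q\calL_{\cV}$ holds over $\cV$ (both conditions are generic, hence hold over some open). Then $\oM|_{\cU}$ is literally zero, so $\oM$ is represented by a divisor supported on $\cX\setminus\cU = \pi^{-1}(\cB\setminus\cV)$, and the bound by $c\overline\cD$ is immediate. Once you have this, your $f^*$-invariance observation for $\pi^*\overline\cE$ (since $f$ lies over $\Id_{\cV}$) completes the Cauchy estimate exactly as you outlined. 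The fix is small, but the distinction between ``vertical'' and ``trivial on $\cU$'' is precisely the content of the estimate, so it should be made explicit.
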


\begin{proof}
Fix a projective arithmetic model $\pi:\cX\to\cB$ for $X/K$, and any nef $\ocL\in\widehat\Pic(\cX)$ whose generic fiber is $L$. There exists an open subscheme $\cV\subset\cB$ such that $\cX_{\cV}\to\cV$ is flat, and over which $f$ can be extended to a morphism $f_{\cV}:\cX_{\cV}\to\cX_{\cV}$ such that $f^*_{\cV}\calL_{\cV}=q\calL_{\cV}$. By possibly enlarging $\cV$, we may assume $\cD:=\cB\backslash\cV$ is an effective Cartier divisor on $\cB$.

We have a composition of maps given by iterates of $f$,
\[
X\xrightarrow{f^m}X\hooklongrightarrow\cX,
\]
whose normalizations produce maps we denote $f_m:\cX_m\to\cX$ with a map $\pi_m:\cX_m\to\cB$. Define 
 \[
 \ocL_m:=q^{-m}f^*_m\ocL\in\widehat\Pic(\cX_m)_{\Q}.
 \]
Let $\cX_0:=\cX$ and $\ocL_0:=\ocL$. By construction, $\cX_{m,\cV}=\cX_{\cV}$ and $\calL_{m,\cV}=\calL_{\cV}$ for every $m\ge0$. The sequence $\oL_f=\{(\cX_m,\ocL_m)\}$ will be our desired adelic line bundle, once we show it is convergent. 

For each $m\ge0$, let $\widetilde\pi_m:\widetilde\cX_m\to\cB$ be a projective arithmetic model of $X/K$ which dominates both $\cX_m$ and $\cX_{m+1}$ via birational morphisms ${\phi_m:\widetilde\cX_m\to \cX_m}$ and ${\psi_m:\widetilde\cX_m\to\cX_{m+1}}$. We further assume that each $\widetilde\cX_m$ dominates $\widetilde\cX_0$ via a birational morphism $\tau_m:\widetilde\cX_m\to\widetilde\cX_0$. As above, $\phi_{m,\cV}$ and $\psi_{m,\cV}$ are both isomorphisms. 

By construction, 
\[
\phi^*_m\ocL_m-\psi^*_m\ocL_{m+1}=\frac1{q^m}\tau^*_m\left(\phi^*_0\ocL_0-\psi^*_0\ocL_{1}\right).
\]
Since $\phi^*_0\ocL_0-\psi^*_0\ocL_{1}$ is supported on $\widetilde\pi_0^*|\cD|$ and $q>1$, the sequence 
$\{\ocL_m\}$ is Cauchy,
and thus $\oL_f$ converges in $\widehat\Pic(X)_{\Cont}$. Since every $\ocL_m$ is nef, the limit is nef (and thus integrable).

\end{proof}




\subsection{The $K/k$-trace}\label{trace}

For now, consider only the base $k$ case. We give a brief overview of Chow's $K/k$-trace and image functors. Proofs can be found in~\cite{langab} and~\cite{conradtrace}. Let $A$ be an abelian variety defined over $K$. The $K/k$-trace of $A$ consists of an abelian variety $\Tr_{K/k}(A)$ defined over $k$, along with a homomorphism ${\tau:\Tr_{K/k}(A)_K\to A}$. This satisfies the universal property that any homomorphism $B_K\to A$, where $B$ is an abelian variety defined over $k$, must factor through $\tau$. In characteristic zero, $\tau$ is injective. In positive characteristic, however, the kernel of $\tau$ is finite and connected, but may be nontrivial.

The $K/k$-image is dual to the trace construction, and consists of an abelian $k$-variety $\Im_{K/k}(A)$ and homomorphism ${\lambda:A\to\Im_{K/k}(A)_K}$. It has the same universal property as the trace, but with arrows reversed. $\lambda\circ\tau$ is an isogeny and $\Tr_{K/k}(A^{\vee})=\Im(A)^{\vee}$. Additionally, any homomorphism $f:A\to B$ of abelian varieties descends to a $k$-homomorphism 
\[
f:\Tr_{K/k}(A)\to\Tr_{K/k}(B).
\]

One can think of the $K/k$-trace and image of $A$ as the largest subgroup and quotient of $A$ defined over $k$. This is true for the group $A(K)$ of $K$ points, but as noted above, true only up to an infinitesimal kernel at the level of varieties in positive characteristic.

We now use the trace to construct a special class of adelic line bundles. As before, $X$ is a projective, geometrically normal $K$ variety.
Extending $K$ if necessary, assume there exists a point $x_0\in X(K)$. We fix the following notation throughout: $\Pic_X$ is the Picard scheme of $X$, and $\Pic^0_{\Red,X}$ is its reduced neutral component, which is an abelian variety~\cite{Kleiman}
. Then $\Pic(X)$ and $\Pic^0(X)$ are the abelian groups consisting of $K$ points of $\Pic_X$ and $\Pic_{\Red,X}^0$, respectively. Define $\Alb_X$, the Albanese variety, as the dual to $\Pic^0_{\Red,X}$, and we have a unique \emph{Albanese morphism}
\[
\iota:X\to\Alb_X
\]
which takes $x_0$ to $0$, and such that any other map from $X$ to an abelian variety taking $x_0$ to $0$ must factor through $\iota$.

Fix a projective model $\cB\to k$ for $K$. We have a map
\[
\Pic\left(\Im_{K/k}(\Alb_X)\right)\longrightarrow\Pic\left(\Im_{K/k}(\Alb_X)\times_k\cB\right)
\]
via the pullback of the projection onto the first factor. But now $\Im_{K/k}(\Alb_X)\times_k\cB\to\cB$ is a projective model for $\Im_{K/k}(\Alb_X)_K$. Composing with the previous then taking the direct limit over all projective models $\cB$ for $K$ induces a map
\[
\Pic\left(\Im_{K/k}(\Alb_X)\right)\longrightarrow\widehat\Pic\left(\Im_{K/k}(\Alb_X)_K\right)_{\Mod}.
\]
This further maps into $\widehat\Pic(X)_{\Mod}$ via the pullback of the Albanese morphism. Finally, by the duality of the trace and image we can define
\[
\Tr_{K/k}\Pic^0(X):=\Tr_{K/k}\left(\Pic^0_{\Red,X}\right)(k)=\Pic^0\left(\Im_{K/k}(\Alb_X)\right)
\] 
and we've produced a morphism
\[
\widehat\tau:\Tr_{K/k}\Pic^0(X)\longrightarrow\widehat\Pic(X)_{\Int}
\]
Write ${\widehat\Tr_{K/k}\Pic^0(X)\subset\widehat\Pic(X)_{\Int}}$ to mean the image of $\widehat\tau$. If we drop the metric, $\widehat\tau$ is simply the trace map $\tau:\Tr_{K/k}(\Pic^0_{\Red,X})_K\to\Pic^0(X)$ on $K$-points.

In the base $\Z$ setting, no traces show up, as there is no field underlying $\Spec\Z$ over which to take a trace. So that we can continue to write single statements for both the base $k$ and base $\Z$ cases, however, we will use the trace notation with the stipulation that ${\widehat\Tr_{K/\Q}\Pic^0(X):=0}$ in the base $\Z$ setting.




\subsection{Flat Metrics}\label{flatsection}
Next we construct \emph{flat} adelic line bundles, which form a subgroup of $\widehat\Pic(X)_{\Int}$ orthogonal to $\widehat\Pic(X)_{\Vrt}$.
\begin{definition}
An adelic line bundle $\overline M\in\widehat\Pic(X)_{\Int}$ is called \emph{flat} provided that for any $\overline N\in\widehat\Pic(X)_{\Vrt}$ and $\overline L_2,\dots\overline L_n\in\widehat\Pic(X)_{\Int}$, we have
\[
\overline M\cdot\overline N\cdot\overline L_2\cdots\overline L_n\equiv 0.
\] 
\end{definition}

Observe that $\widehat\Tr_{K/k}\Pic^0(X)$ is numerically trivial, and thus in particular \emph{flat}: if $\cX\to\cB$ is any projective model for $X\to K$, then the induced element of $\Pic\left(\Im_{K/k}(\Alb_X)\times_k\cB\right)$ is algebraically trivial on every fiber.

\begin{lemma}\label{flatexistence}
Any $M\in\Pic^0(X)_{\Q}$ has a flat metric $\overline M$, which is unique up to $\pi^*\widehat\Pic(K)_{\Int}$.
\end{lemma}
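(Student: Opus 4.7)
The plan is to reduce to the Albanese variety and then apply Tate's limiting argument. After possibly enlarging $K$ to obtain a rational point $x_0\in X(K)$, consider the Albanese morphism $\iota:X\to A:=\Alb_X$. Since $A$ is defined as the dual of $\Pic^0_{\Red,X}$, double duality combined with the universal property of $\iota$ gives a natural isomorphism $\iota^*:\Pic^0(A)_\Q\isom\Pic^0(X)_\Q$. Thus it suffices to construct a canonical flat adelic lift $\oN$ of each $N\in\Pic^0(A)_\Q$ on $A$; then setting $\oM:=\iota^*\oN$ produces the desired lift on $X$.

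To build $\oN$, fix an integer $n>1$ and recall that the theorem of the cube gives $[n]^*N=nN$ on $A$. Starting from a projective arithmetic model $\mathcal A\to\mathcal B$ for $A$ and any integrable model-metric lift $\overline{\mathcal N}_0$ of $N$, use iterated normalizations as in Lemma~\ref{tateslimiting} to extend $[n^m]:A\to A$ to morphisms $f_m:\mathcal A_m\to\mathcal A$, and set
\[
\overline{\mathcal N}_m:=n^{-m}f_m^*\overline{\mathcal N}_0.
\]
The consecutive difference $\overline{\mathcal N}_{m+1}-\overline{\mathcal N}_m$ equals $n^{-m}[n^m]^*\bigl(n^{-1}[n]^*\overline{\mathcal N}_0-\overline{\mathcal N}_0\bigr)$, whose inner factor is a vertical class supported over a divisor on $\mathcal B$. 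Because $[n^m]$ is a morphism over $\mathcal B$, its pullback does not amplify this support, and the $n^{-m}$ factor makes the sequence Cauchy in the topology defined in Section~\ref{defssection}. The limit $\oN$ therefore lies in $\widehat\Pic(A)_\Int$, is nef whenever $\overline{\mathcal N}_0$ is, and satisfies $[n]^*\oN=n\oN$. Flatness of $\oM=\iota^*\oN$ is then obtained by approximating the intersection $\oM\cdot\overline V\cdot\oL_2\cdots\oL_n$ for any vertical $\overline V$ and integrable $\oL_i$ by model-level intersections of $\iota_m^*\overline{\mathcal N}_m$: the vertical factor $\overline V$ is bounded by a multiple of a fixed pullback $\pi^*\overline{\mathcal D}$ independent of $m$, while the scaling $n^{-m}$ forces the numerical contribution to zero in $\widehat\Pic(K)_\Int$.

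For uniqueness, suppose $\oM$ and $\oM'$ are two flat lifts of $M$, and set $\overline V:=\oM-\oM'\in\widehat\Pic(X)_\Vrt$; then $\overline V$ is itself flat. On each projective arithmetic model $\mathcal X\to\mathcal B$, $\overline V$ is represented by vertical (Hermitian) divisors supported on finitely many fibers of $\mathcal X\to\mathcal B$. Flatness, applied to intersections with restrictions of ample integrable classes to these fibers, forces each fibral component to have zero self-intersection against such an ample class; the classical Hodge index theorem on surface fibers (the Zariski lemma, as invoked in the proof of Lemma~\ref{completion}) then forces each fibral component to be proportional to a full fiber, and hence to come from $\mathcal B$. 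Passing to the limit over models yields $\overline V\in\pi^*\widehat\Pic(K)_\Int$, completing the uniqueness.

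The main obstacle I anticipate is the flatness verification: the equivariance $[n]^*\oN=n\oN$ lives on $A$, not on $X$, so one must carefully transfer the estimate through $\iota$ and through the model-level approximations before taking the limit. The uniqueness step is technical but essentially classical once the adelic flatness hypothesis has been reduced to a fiberwise statement about vertical divisors on models.
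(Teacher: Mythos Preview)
Your flatness argument has a genuine gap. To exploit the bound $-C\pi^*\overline{\mathcal D}\le\overline V\le C\pi^*\overline{\mathcal D}$ you need the remaining factors in the intersection to be nef, so that pairing with the effective classes $C\pi^*\overline{\mathcal D}\pm\overline V$ yields a sign. But you cannot choose the initial model $\overline{\mathcal N}_0$ nef, since its generic fibre $N$ lies in $\Pic^0(A)$; and if you decompose $\overline{\mathcal N}_0$ as a difference of ample pieces, those pieces are not eigenvectors for $[n]^*$ with eigenvalue $n$---their pullbacks under $[n^m]^*$ grow like $n^{2m}$ in the N\'eron--Severi group, swamping the factor $n^{-m}$. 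More conceptually, the equivariance $[n]^*\oN=n\oN$ lives on $A$, while the intersection $\oM\cdot\overline V\cdot\oL_2\cdots\oL_n$ you must kill lives on $X$, and since $\overline V$ and the $\oL_i$ are not pullbacks along $\iota$ there is no way to transport that intersection back to $A$ where the equivariance could act on it.

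The paper's argument is structurally different and avoids this obstruction. It works on $X\times\Alb_X^{\vee}$ with the Poincar\'e bundle $\overline P$ satisfying $[\Id_X\times 2]^*\overline P\cong 2\overline P$, forms the relative Deligne pairing
\[
\overline Q=\left\langle\overline P,\pr_1^*\overline V,\pr_1^*\oL_2,\dots,\pr_1^*\oL_n\right\rangle\left((X\times\Alb_X^{\vee})\big/\Alb_X^{\vee}\right),
\]
and observes that $Q$ is trivial (because $V=\cO_X$) while $[2]^*\overline Q\cong 2\overline Q$ on $\Alb_X^{\vee}$, forcing $\overline Q$ itself to be trivial. Restricting to the point $\alpha\in\Alb_X^{\vee}$ corresponding to $M$ then gives the vanishing on $X$. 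The key is that $\overline V$ and the $\oL_i$ enter only through $\pr_1$, so the doubling acts on the $\Alb_X^{\vee}$ factor and leaves them untouched. For uniqueness your sketch is in the right direction but circuitous: once $\overline V:=\oM-\oM'$ is vertical and flat one has $\overline V^2\cdot\oL_2\cdots\oL_n\equiv 0$ immediately from the definition of flat, and the paper simply invokes the vertical case of Theorem~\ref{hodgeindex} (proved independently of this lemma) rather than reproving it fibre by fibre.
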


\begin{proof}

Let $P$ be the Poincar\'e universal line bundle on $\Alb_X\times \Alb_X^{\vee}$. We also write $P$ to mean its pullback via the Albanese morphism to $X\times\Alb_X^{\vee}$. Then $P$ is determined by the property that $P|_{\{x_0\}\times \Alb_X^{\vee}}$ is trivial 
and $P|_{X\times\{\alpha\}}$ is isomorphic to the line bundle represented by $\alpha$ for every $\alpha\in\Alb_X^{\vee}(\overline K)=\Pic^0_{\Red,X}(\overline K)$.

We have projection maps $\pr_1:X\times \Alb_X^{\vee}\to X$ and $\pr_2:X\times \Alb_X^{\vee}\to \Alb_X^{\vee}$. We also have maps 
\[
[\Id_X\times m]:X\times\Alb_X^{\vee}\longrightarrow X\times\Alb_X^{\vee}
\]
given by the identity on the first component and scalar multiplication by $m$ on the second. On $\Alb_X\times\Alb_X^{\vee}$, note that $P$ is symmetric, meaning $[-1]^*P=P$, and $[2]^*P=4P$. Then on $X\times\Alb_X$, $P$ is anti-symmetric in the sense that ${[\Id_X\times -1]^*P\cong -P}$, and thus $[\Id_X\times 2]^*P\cong 2P$. By Tate's limiting argument, Lemma~\ref{tateslimiting}, $P$ has a unique metric $\overline P \in\widehat\Pic(X\times\Alb_X^{\vee})_{\Int}$ such that 
\[
[\Id_X\times 2]^*\overline P\cong2\overline P.
\]
Assume that $M\in\Pic^0(X)$; by linearity the result will hold for $\Pic^0(X)_{\Q}$. Let $\alpha\in\Alb_X^{\vee}(\overline K)$ correspond to the line bundle $M$. We claim that 
\[
\overline M:=\overline P|_{X\times\{\alpha\}}\in\widehat\Pic(X)_{\Int}
\]
is flat. Let $\overline N\in \widehat\Pic(X)_{\Vrt}$ and $\oL_2,\dots,\oL_n\in\widehat\Pic(X)_{\Int}$. Consider the adelic line bundle
\[
\overline Q:=\left\langle \overline P,\pr_1^*\overline N,\pr_1^*\oL_2,\dots,\pr_1^*\oL_n\right\rangle\left((X\times\Alb_X^{\vee})\big/\Alb_X^{\vee}\right)\in\widehat\Pic(\Alb_X^{\vee})_{\Int}
\] 
given by the relative intersection product. The underlying line bundle $Q$ is trivial since $N=\cO_X$, and $[2]^*\overline Q\cong 2\overline Q$, thus $\overline Q$ itself must be trivial. But then by the projection formula,
\[
\left\langle \overline M,\overline N,\oL_2,\dots,\oL_n\right\rangle\left(X\big/\Spec(K)\right)\in\widehat\Pic(K)_{\Int}
\]
must also be zero.

All that remains to show is that $\overline M$ is unique up to $\pi^*\overline\Pic(K)_{\Int}$. Suppose $\overline M'$ is also flat, and $M=M'$. Then $\overline N:=\overline M-\overline M'$ is vertical. Thus
\[
\overline N^2\cdot\oL_2\cdots\oL_n=\left(\overline M-\overline M'\right)^2\cdot\oL_2\cdots\oL_n\equiv 0.
\]
By the vertical case of the Hodge Index Theorem (Section~\ref{verticalcase}), which doesn't require any of the results of this section, $\overline N\in\pi^*\widehat\Pic(K)_{\Int}.$
\end{proof}




\section{Heights}\label{heightssection}

As usual let $K/k$ be any finitely generated extension of fields, and let $d$ (resp. $d+1$) to be the transcendence degree of $K$ over $k$ in the base $\Z$ (resp. base $k$) setting. Let $X$ be a geometrically normal projective variety of dimension $n$ over $K$. Let $\overline L\in\widehat\Pic(X)_{\Int}$ such that $L$ is ample. We define a vector-valued height function, which takes values in the $\Q$-vector space $\widehat\Pic(K)_{\Int}$.

\begin{definition}

let $Z$ be any closed $\overline K$-subvariety of $X$. Write $\widetilde Z$ for the minimal $K$-subvariety of $X$ which contains $Z$.
The \emph{height} of $Z$ with respect to $\overline L$ is 
\[
h_{\overline L}(Z):=\frac{\left(\overline L|_{\widetilde Z}\right)^{\dim Z+1}}{\left(\dim Z+1\right)\left(L|_{\widetilde Z}\right)^{\dim Z}}\in\widehat\Pic(K)_{\Int}
\]
In particular, this defines a height $X(\overline K)\to\widehat\Pic(K)_{\Int}$ on points, and for this height we may drop the requirement that $L$ is ample.
\end{definition}

\begin{remark}

When $d=0$, so that $K$ is either a number field or a transcendence degree one function field over $k$, we have an arithmetic degree map $\widehat\Pic(K)_{\Int}\to\R$, and vector-valued heights composed with the degree map reproduce the usual $\R$-valued height functions.
\end{remark}

When we work with polarized dynamical systems, a particular class of heights, \emph{canonical heights} will be especially useful. As in Section~\ref{admissible}, let $f:X\to X$ be a dynamical system polarized by an ample line bundle $L\in\Pic(X)$ such that $f^*L\cong qL$, so that Lemma~\ref{tateslimiting} produces a canonical metric $\oL_f$.

\begin{definition}
The height function $h_f:=h_{\oL_f}$
is called the \emph{canonical height} for the polarized dynamical system $f:X\to X$. It has the property that $h_f(f(x))=qh_f(x)$.
\end{definition}

On an abelian $K$-variety $A$, let $[2]$ be scalar multiplication by 2, and let $L\in\Pic(A)$ be any ample, symmetric line bundle. Then $[2]^*L\cong 4L$. This produces the canonical height $h_{\oL}=h_{\oL_{[2]}}$ on $A(\overline K)$.  

We name a particular canonical height, the \emph{N\'eron-Tate height}, on $A$. Let $\theta$ be the $\theta$-divisor on $A$. Then the line bundle $\Theta=\cO(\theta)+[-1]^*\cO(\theta)$ is symmetric and ample. 
\begin{definition}	
The vector-valued \emph{N\'eron-Tate height}, $h_{\Nt}$, is defined to be
\[
h_{\Nt}:=\frac12h_{\overline\Theta}:X(\overline K)\longrightarrow\widehat\Pic(K)_{\Int}.
\]

\end{definition}

For general heights, since $h_{\overline L}(Z)\in\widehat\Pic(K)_{\Int}$, we can produce values in $\R$ by choosing $\overline H_1,\dots,\overline H_d\in\widehat\Pic(K)_{\Int}.$ In the base $\Z$ case, this produces a Moriwaki height, and the choice corresponds to a choice of polarization~\cite{moriwakideligne,moriwakifg1}. In the base $k$ setting this is the same choice required to specify a function field height, or equivalently, a set of normalized absolute values on $K$ which satisfy the product formula~\cite{conradtrace,langdiophantinegeometry,serremordellweil}. Note that these sources fix a single model $\cB$ for $K$, but the general case follows by approximation. More specifically, define

\begin{definition}
\[
h_{\overline L}^{\overline H_1,\dots,\overline H_d}(Z):=h_{\oL}(Z)\cdot\oH_1\cdots\oH_d=\frac{\left(\overline L|_{\widetilde Z}\right)^{\dim Z+1}\cdot \overline H_1\cdots\overline H_d}{\left(\dim Z+1\right)\left(L|_{\widetilde Z}\right)^{\dim Z}}\in\R.
\]
\end{definition}

We list several properties of height functions, all of which are either well-known (see~\cite{langdiophantinegeometry,serremordellweil}, for example) or come from Moriwaki~\cite{moriwakifg1}.
\begin{proposition}
Let $\oL,\oM\in\widehat\Pic(X)_{\Int}$ and $\oH_1,\dots,\oH_d\in\widehat\Pic(K)_{\Int}$.
\begin{enumerate}
\item
If $L\in\Pic(X)_{\Q}$ is ample and $\oL$ is nef, then $h_{\oL_f}(Z)\ge0$ for all subvarieties $Z$ of $X$.
\item
$h_{\oL}(x)+h_{\oM}(x)=h_{\oL+\oM}(x)$ for all $x\in X(\overline K)$.
\item
If $L=M$, then $h_{\oL}^{\oH_1,\dots,\oH_d}$ and $h_{\oM}^{\oH_1,\dots,\oH_d}$ are equal up to $O(1)$. In particular, if $M=\cO_X$, then $h_{\oM}^{\oH_1,\dots,\oH_d}=O(1)$.
\item
If $L$ is ample and $M$ is arbitrary, and $\oH_1,\dots,\oH_d$ are nef, then there exists a positive real constant $C>0$ such that 
\[
\left|h_{\oM}^{\oH_1,\dots,\oH_d}\right|\le C\cdot h_{\oL}^{\oH_1,\dots,\oH_d}+O(1).
\]
In particular, by (3), $h_{\oL}^{\oH_1,\dots,\oH_d}$ is bounded below.
\end{enumerate}
\end{proposition}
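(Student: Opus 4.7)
The first two properties come directly from the intersection theory of Section~\ref{intersectionssec}. For (1), the restriction $\oL|_{\widetilde Z}$ remains nef, so the self-intersection $(\oL|_{\widetilde Z})^{\dim Z+1}$ is a product of nef adelic line bundles and therefore pseudo-effective in $\widehat\Pic(K)_{\Int}$, as noted at the end of Section~\ref{intersectionssec}; dividing by the positive real $(\dim Z+1)(L|_{\widetilde Z})^{\dim Z}$ preserves pseudo-effectivity. For (2), $h_{\oL}(x) = \oL|_{\widetilde x}/[K(x):K]$ depends linearly on $\oL$ by multilinearity of the Deligne pairing.

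The heart of the proposition is (3); once it is proved, (4) follows by a standard trick. The plan for (3) is to set $\overline N := \oL - \oM$, which lies in $\widehat\Pic(X)_{\Vrt}$ since $L = M$, and to show that $h_{\overline N}^{\oH_1,\dots,\oH_d}(x) = O(1)$ uniformly in $x\in X(\overline K)$. I would approximate $\overline N$ in $\widehat\Pic(X)_{\Int}$ by a Hermitian line bundle $\overline{\mathcal{N}}_0$ realized on a single projective arithmetic model $\cX \to \cB$, with approximation error controlled by $\epsilon\,\overline{\mathcal{D}}$ for a strictly effective arithmetic divisor $\overline{\mathcal{D}}$ as in Section~\ref{defssection}; similarly approximate each $\oH_i$. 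On this fixed model, the projection formula identifies $h_{\overline{\mathcal{N}}_0}^{\oH_1,\dots,\oH_d}(x)$ with an Arakelov-type intersection number on $\cB$ of $\pi_*(\overline{\mathcal{N}}_0\cdot\widetilde x)$ against $\oH_1\cdots\oH_d$. Because $\overline{\mathcal{N}}_0$ has trivial generic fiber, this push-forward is supported on vertical loci, and its intersection multiplicities with $\widetilde x$ are bounded in terms of $\deg\widetilde x$ by a constant depending only on $\overline{\mathcal{N}}_0$; after dividing by $\deg\widetilde x$, the result is uniformly bounded in $x$. Letting $\epsilon\to 0$ and invoking the continuity of the $\R$-valued intersection pairing upgrades this to the desired $O(1)$ bound. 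The ``in particular'' case then follows by choosing $\oL$ to be the trivial metric on $\cO_X$, for which $h_{\oL}^{\oH_1,\dots,\oH_d}=0$ identically. The main obstacle is this control of vertical intersections in the limit; it is the adelic/vector-valued analogue of Weil's classical boundedness theorem for heights under change of defining data.

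For (4), since $L$ is ample, I would pick $n>0$ large enough that both $nL - M$ and $nL + M$ are ample on $X$, and fix nef adelic lifts $\overline{nL\pm M}\in\widehat\Pic(X)_{\Nef}$. Pairing with the nef $\oH_i$ and applying (1) gives $h_{\overline{nL\pm M}}^{\oH_1,\dots,\oH_d}(Z)\ge 0$. By (2) and (3), $h_{\overline{nL\pm M}}^{\oH_1,\dots,\oH_d}=n h_{\oL}^{\oH_1,\dots,\oH_d}\pm h_{\oM}^{\oH_1,\dots,\oH_d}+O(1)$, which rearranges to $|h_{\oM}^{\oH_1,\dots,\oH_d}|\le n h_{\oL}^{\oH_1,\dots,\oH_d}+O(1)$, giving the claim with $C = n$. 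For the final bounded-below assertion, (3) lets me replace $\oL$ by any nef lift $\oL'$ of $L$ (which exists since $L$ is ample) at cost $O(1)$; then (1) yields $h_{\oL'}^{\oH_1,\dots,\oH_d}\ge 0$, so $h_{\oL}^{\oH_1,\dots,\oH_d}$ is bounded below.
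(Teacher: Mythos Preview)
The paper does not actually prove this proposition: it simply states that these properties ``are either well-known (see~\cite{langdiophantinegeometry,serremordellweil}, for example) or come from Moriwaki~\cite{moriwakifg1}'' and moves on. Your argument supplies the standard proofs behind those citations. Parts (1) and (2) are immediate from the intersection formalism of Section~\ref{intersectionssec}, exactly as you say; your treatment of (3) via approximation by a vertical line bundle on a single model, followed by bounding the contribution of the vertical divisor uniformly in $\deg\widetilde x$, is precisely Weil's boundedness argument transported to this setting; and your derivation of (4) from (1)--(3) by choosing $n$ with $nL\pm M$ ample is the classical trick. So your proposal is correct and is essentially what the cited references do, just made explicit in the adelic language of this paper rather than left as a reference.
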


\subsection{Heights in base $\Z$}
Throughout this subsection, assume we are in the base $\Z$ setting. These results are stated in~\cite{yz2}. The height $h_{\overline L}^{\overline H_1,\dots,\overline H_d}(Z)$ is defined in~\cite{moriwakifg1} when $\overline L$ and $\overline H_1,\dots\overline H_d$ are all defined on the same projective arithmetic model $\cX\to\cB$, and generalized to limits of such in~\cite{moriwakifg2}. 
As a slight generalization of Moriwaki's work, we have the following.

\begin{theorem}[Northcott property for vector-valued heights, base $\Z$ case]\label{NorthcottZ}
Let $\oL\in\widehat\Pic(X)_{\Int}$ and suppose $L$ is ample. Let $D\in\Z_{>0}$ be any positive bound and $\alpha\in\widehat\Pic(K)_{\Int}$. Then the set
\[
\left\{x\in X(\overline K)\;:\;[K(x):K]\le D,\;h_{\oL}(x)\le\alpha\right\}
\]
is finite. 
\end{theorem}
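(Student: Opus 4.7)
The plan is to reduce to Moriwaki's $\R$-valued Northcott theorem for heights relative to a fixed arithmetically positive polarization, as established in \cite{moriwakifg1, moriwakifg2}. The key observation is that a pseudo-effective upper bound on the vector-valued height $h_{\oL}(x)\in\widehat\Pic(K)_{\Int}$ translates, upon pairing with $d$ nef adelic line bundles on $\Spec K$, into an ordinary real-valued upper bound on the Moriwaki-type height $h_{\oL}^{\oH_1,\dots,\oH_d}(x)$.

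Concretely, I would first choose $\oH_1,\dots,\oH_d\in\widehat\Pic(K)_{\Int}$ to be arithmetically positive; these exist by fixing any projective arithmetic model $\cB$ for $K$ and pulling back ample $C^{\infty}$-Hermitian line bundles from $\cB$. By Definition~\ref{positivitydefs}(1), the hypothesis $h_{\oL}(x)\le\alpha$ says precisely that $\alpha-h_{\oL}(x)\in\widehat\Pic(K)_{\Int}$ is pseudo-effective, so its intersection with the nef $d$-tuple $(\oH_1,\dots,\oH_d)$ is non-negative. Unpacking, this yields the real inequality
\[
h_{\oL}^{\oH_1,\dots,\oH_d}(x) \;=\; h_{\oL}(x)\cdot\oH_1\cdots\oH_d \;\le\; \alpha\cdot\oH_1\cdots\oH_d \;=:\; C \;\in\;\R.
\]
Since $L$ is ample and $(\oH_1,\dots,\oH_d)$ is arithmetically positive, Moriwaki's Northcott theorem then yields finiteness of $\{x\in X(\overline K) : [K(x):K]\le D,\; h_{\oL}^{\oH_1,\dots,\oH_d}(x)\le C\}$, which is the desired conclusion.

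The main obstacle is that Moriwaki's original theorem in \cite{moriwakifg1} is formulated for $\oL$ and the $\oH_i$ coming from a common projective arithmetic model $\cX\to\cB$, whereas here they live in the completed limit groups $\widehat\Pic(X)_{\Int}$ and $\widehat\Pic(K)_{\Int}$. To bridge this I would approximate $\oL$ by a Cauchy sequence of nef Hermitian line bundles $\{\ocL_m\}$ on a cofinal system of models $\cX_m\to\cB_m$, and similarly for each $\oH_i$, then use property (4) of the proposition preceding the theorem to control the discrepancies between $h_{\oL}^{\oH_1,\dots,\oH_d}(x)$ and the model-level heights by a fixed ample height up to $O(1)$ uniformly in $x$; this passage to the limit is essentially the content of \cite{moriwakifg2}. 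A secondary issue is that the hypothesis permits $\oL$ merely integrable rather than nef: this is handled by writing $\oL=\oL_{+}-\oL_{-}$ with both summands nef and, after adding a common ample auxiliary so that both $L_{+}$ and $L_{-}$ are ample, reducing the estimate to Moriwaki's Northcott theorem applied to $\oL_{+}$ while using boundedness below of $h_{\oL_{-}}^{\oH_1,\dots,\oH_d}$ on bounded-degree points.
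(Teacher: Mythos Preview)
Your approach is correct and matches the paper's: reduce the vector-valued bound $h_{\oL}(x)\le\alpha$ to a real bound by intersecting with nef $\oH_1,\dots,\oH_d$, then invoke Moriwaki's Northcott theorem. The paper handles the passage from general $\oL\in\widehat\Pic(X)_{\Int}$ to a model more simply than you propose: rather than approximating $\oL$ by a Cauchy sequence and controlling discrepancies, it just picks any $\oL_0$ on a single projective model with the same generic fiber $L_0=L$ and uses property~(3) of the preceding proposition (heights with equal generic fiber differ by $O(1)$). This also makes your final paragraph about decomposing $\oL=\oL_+-\oL_-$ unnecessary, since Moriwaki's lemma as quoted only needs $L$ ample, not $\oL$ nef.
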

Recall that $h_{\oL}(x)\le\alpha$ means $\alpha-h_{\oL}(X)\in\widehat\Pic(K)_{\Int}$ is pseudo-effective. As is well known for heights over number fields, we have the following corollary.

\begin{cor}\label{heightprepcorZ}
Let $f:X\to X$ be a polarized dynamical system. Then we have an equality
\[
\Prep(f)=\{x\in X(\overline K)|h_f(x)\equiv0\}
\]
of $f$-preperiodic points and canonical height zero points. If $A$ is an abelian variety, then $h_{\NT}(x)\equiv0$ if and only if $x\in A(\overline K)$ is torsion. 
\end{cor}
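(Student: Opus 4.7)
The plan is to reduce both inclusions to the vector-valued Northcott property (Theorem~\ref{NorthcottZ}), using the $f^*$-equivariance $f^*\oL_f\cong q\oL_f$ of the canonical adelic line bundle from Lemma~\ref{tateslimiting}, which translates into the functional equation $h_f(f(y))=q\,h_f(y)$ on $X(\overline K)$.

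For the inclusion $\Prep(f)\subseteq\{x:h_f(x)\equiv0\}$, suppose $f^n(x)=f^m(x)$ for some $n>m\ge0$. Iterating the equivariance gives $q^n h_f(x)=q^m h_f(x)$ in $\widehat\Pic(K)_{\Int}$, so $(q^n-q^m)h_f(x)=0$. Since $q>1$, we obtain $h_f(x)=0$, and \emph{a fortiori} $h_f(x)\equiv0$. For the converse, suppose $h_f(x)\equiv0$. Then $h_f(f^k(x))=q^k h_f(x)\equiv0$ for every $k\ge0$. By Definition~\ref{positivitydefs}, numerical triviality implies pseudo-effectivity of both $h_f(f^k(x))$ and its negative (nef elements are in particular integrable), so in the partial order we have $h_f(f^k(x))\le 0$. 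Moreover, since $f$ is defined over $K$, the inclusion $K(f^k(x))\subseteq K(x)$ bounds $[K(f^k(x)):K]\le D:=[K(x):K]$ uniformly in $k$. Applying Theorem~\ref{NorthcottZ} with $\alpha=0$ then shows that the forward orbit $\{f^k(x):k\ge0\}$ is finite in $X(\overline K)$, so $f^n(x)=f^m(x)$ for some $n>m$, i.e. $x\in\Prep(f)$.

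For the N\'eron-Tate statement, recall that by construction $h_{\NT}=\tfrac12 h_{\overline\Theta}$ agrees with the canonical height of the polarized system $(A,[2],\Theta)$, where $\Theta$ is symmetric ample and $[2]^*\Theta\cong 4\Theta$. On an abelian variety the preperiodic points of $[2]$ are exactly the torsion points: if $[2]^n x=[2]^m x$ with $n>m$ then $2^m(2^{n-m}-1)x=0$, so $x$ is annihilated by a nonzero integer; conversely every torsion point has finite $[2]$-orbit. Combining this with the equivalence already proven for general $f$ yields $h_{\NT}(x)\equiv0 \iff x\in A(\overline K)_{\Tor}$.

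The only substantive input is the vector-valued Northcott property of Theorem~\ref{NorthcottZ}; once it is available, the corollary is a clean Tate-style orbit-finiteness argument, and the main conceptual step is simply to observe that numerical triviality in $\widehat\Pic(K)_{\Int}$ is strong enough to feed into Northcott via the $\le\alpha$ formulation with $\alpha=0$.
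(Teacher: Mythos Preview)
Your proof is correct and follows essentially the same approach as the paper: both rely on the functional equation $h_f(f^m(x))=q^m h_f(x)$ together with Theorem~\ref{NorthcottZ}, and you have simply spelled out the details (the forward-orbit argument, the degree bound $K(f^k(x))\subseteq K(x)$, and the identification of $\Prep([2])$ with torsion) that the paper leaves implicit. The parenthetical ``(nef elements are in particular integrable)'' is a non sequitur and can be deleted, but it does not affect the argument.
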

The corollary follows immediately from the fact that 
\[
h_f(f^m(x))=q^mh_f(x)
\]
for all $x\in X(\overline K)$. We now prove the theorem.

\begin{proof}
Moriwaki proves the following:
\begin{lemma}[Theorem 4.3 of~\cite{moriwakifg1}]
Let $K$ be a finitely  generated extension of $\Q$. Suppose $\oL\in\widehat\Pic(X)_{\Int}$, that $L$ is ample, that $\overline H_1,\dots,\overline H_d\in\widehat\Pic(K)_{\Int}$ all come from a single projective arithmetic model $\cX\to\cB$ for $X/K$, and that $L$ is ample and $\overline H_1,\dots,\overline H_d$ are all nef and big. Then for any $\alpha\in\R$ and $D\in\Z_{>0}$, the set
\[
\left\{x\in X(\overline K)\big| h_{\overline L}^{\overline H_1,\dots,\overline H_d}(x)\le \alpha, [K(x):K]\le D\right\}
\]
is finite.
\end{lemma}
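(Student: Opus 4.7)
The plan is to follow Moriwaki's original argument in \cite{moriwakifg1}, combining an embedding reduction to projective space with an induction on the transcendence degree $d$.

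First I would reduce to $X = \P^N_K$ with $\oL = \overline{\cO(1)}$ equipped with a standard Hermitian model metric. Ampleness of $L$ gives, for some $n > 0$, a closed immersion $\iota : X \hookrightarrow \P^N_K$ with $\iota^*\cO(1) \cong L^{\otimes n}$. Lifting $\iota$ to projective models and comparing metrics, $\iota^*\overline{\cO(1)} - n\oL$ has trivial generic fiber and bounded metric/Green's function, so by item (3) of the preceding height Proposition the real-valued heights $h_{\iota^*\overline{\cO(1)}}^{\oH_1,\dots,\oH_d}$ and $n\, h_{\oL}^{\oH_1,\dots,\oH_d}$ differ by $O(1)$. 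It thus suffices to prove Northcott for $\overline{\cO(1)}$ on $\P^N_K$.

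Next, induct on $d$. The case $d=0$ is when $K$ is a number field and $h_{\overline{\cO(1)}}$ is the classical logarithmic Weil height, so Northcott is classical. For the inductive step, rewrite the height for $x \in \P^N(\overline K)$ via its horizontal Zariski closure $\Delta_x \subset \P^N_\cB$ as an intersection number:
\[
[K(x):K] \cdot h_{\overline{\cO(1)}}^{\oH_1,\dots,\oH_d}(x) = \overline{\cO(1)} \cdot \pi^*\oH_1 \cdots \pi^*\oH_d \cdot [\Delta_x].
\]
Arithmetic bigness of $\oH_1$, combined with Moriwaki's arithmetic Hilbert--Samuel/Kodaira theorem, yields a nonzero strictly small global section $s$ of some power $\oH_1^{\otimes m}$, i.e.\ $\sup\log\|s\| < -\delta < 0$ on $\cB(\C)$. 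Let $\cB' \subset \cB$ be an irreducible horizontal component of $\div(s)$ (transcendence degree $d-1$) and set $\cX' = \pi^{-1}(\cB')$. Using $\oH_1 = m^{-1}\widehat{\div}(s)$ and the arithmetic projection formula, the intersection above equals
\[
\tfrac{1}{m}\!\left(\overline{\cO(1)}|_{\cX'} \cdot (\pi|_{\cX'})^*\oH_2|_{\cB'} \cdots (\pi|_{\cX'})^*\oH_d|_{\cB'} \cdot [\Delta_x \cap \cX'] + C_s(x)\right),
\]
where $C_s(x)$ arises from $-\log\|s\|$ and is bounded below by $\delta \cdot [K(x):K] \cdot$ (a positive archimedean factor). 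Bounded original height then forces bounded restricted height; the inductive hypothesis applied to $\P^N_{K'}$ (with $K' = $ function field of $\cB'$, transcendence degree $d-1$) yields finitely many possible restricted $0$-cycles $\Delta_x \cap \cX'$, and a standard boundedness argument for horizontal arithmetic curves in $\P^N_\cB$ recovers the $\Delta_x$'s from their intersections with $\cX'$ up to finite ambiguity.

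The main obstacle is ensuring that the restrictions $\oH_2|_{\cB'},\dots,\oH_d|_{\cB'}$ remain nef and big on $\cB'$ so that the inductive hypothesis actually applies: nefness is automatic from restriction, but arithmetic bigness can fail when restricting to a divisor. Moriwaki's resolution is an arithmetic Bertini-type step, choosing $s$ generically inside a large basis of small sections of $\oH_1^{\otimes m}$ so that $\cB'$ is irreducible, generically smooth over the base, and the restricted polarization retains enough arithmetic volume. This generic-choice step, together with tracking the archimedean contribution $C_s(x)$ carefully enough to chain the inductions, is the technical heart of the proof.
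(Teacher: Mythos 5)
First, be aware that the paper does not prove this lemma at all: it is quoted verbatim as Theorem 4.3 of Moriwaki's paper and used as a black box, so your proposal has to be judged as a reconstruction of Moriwaki's argument rather than compared with anything in this text. Your opening reductions are fine (passing to $\P^N_K$ with $\overline{\cO(1)}$ via an embedding and the $O(1)$-comparison of heights, and rewriting $[K(x):K]\,h_{\overline{\cO(1)}}^{\oH_1,\dots,\oH_d}(x)$ as an arithmetic intersection number along the closure $\Delta_x$), and using a strictly small section of a power of $\oH_1$ to cut down the base is a reasonable instinct. Two smaller inaccuracies: $\Delta_x$ is finite over $\cB$, hence of dimension $d+1$, not a ``horizontal arithmetic curve''; and the archimedean contribution $C_s(x)$ is in general only bounded below by $0$, since the factor $\int_{\Delta_x(\C)}c_1(\overline{\cO(1)})\wedge\pi^*c_1(\oH_2)\wedge\cdots$ can vanish (e.g.\ for points with constant coordinates), so it is not $\ge\delta\,[K(x):K]$ times a positive constant.

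The genuine gap is the final step. Finiteness of the possible restricted cycles $\Delta_x\cap\cX'$ does not yield finiteness of the points $x$: for a fixed horizontal divisor $\cB'\subset\cB$ and a fixed specialization $y\in\P^N(\overline{K'})$, there are infinitely many $x\in\P^N(K)$ of bounded degree whose closure meets $\cX'$ in (the closure of) $y$ --- already for $K=\Q(t)$, $\cB'=\{t=0\}$, any congruence class modulo $t$ contains infinitely many points. So the claimed ``standard boundedness argument for horizontal arithmetic curves in $\P^N_\cB$'' recovering $\Delta_x$ from $\Delta_x\cap\cX'$ up to finite ambiguity does not exist; on the bounded-height set, that recovery statement is essentially the theorem itself, so the induction is circular exactly at its crux. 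What is missing is the mechanism that converts the height bound into an actual counting statement, i.e.\ the analogue of ``bounded height forces the integer coordinates into a finite set'' in the classical proof: one must use the full intersection-number bound to control both the geometric size of the coordinates of $x$ (their degrees over the function-field directions) and the archimedean size of their integral coefficients, so that bounded height and degree confine $x$ to a finite set of small sections on a model; bigness of the $\oH_i$ is what makes such a comparison with an explicit polarization possible. Restricting to a single divisor and invoking induction on $d$ cannot, by itself, supply this, so as written the proposal does not close.
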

This proves the theorem immediately for any $\oL$ coming from a single projective model $\cX$ (as opposed to a limit). For general $\oL$, chose $\oL_0$ such that $\oL_0$ is defined on a single projective model and $L=L_0$. Then the difference
\[
h_{\overline L}^{\overline H_1,\dots,\overline H_d}(x)-h_{\overline L_0}^{\overline H_1,\dots,\overline H_d}(x)
\]
is a bounded function.
\end{proof}

\subsection{Heights in base $k$}\label{heightsbaseksubsection}
Now assume we are in the base $k$ setting. 
The results of the previous subsection hold when $k$ is a finite field, but fail otherwise. Baker~\cite{bakerisotrivial} notes that even for $\phi:\P^1\to\P^1$, if $k$ is infinite, the set
\[
\left\{x\in\P^1(K):h_{\phi}(x)\le M\right\}
\]
is infinite when $M$ is large enough. 

We do get results, however, if we study sets of height zero instead of sets of bounded height. Since many of the properties of general heights only hold up to a bounded function, we also restrict to canonical heights. Additionally, we must impose necessary non-isotriviality conditions. Consider the example of $k$ algebraically closed, and a polarized dynamical system $f_K:X_K\to X_K$, where $X$ is a projective variety defined over $k$. Then $X$ has infinitely many $k$ points, and all of these have canonical height zero.

Recall the definitions of a \emph{constructible map}, \emph{constructibly isotrivial}, and \emph{totally non-isotrivial} from Definition~\ref{totallynonisotrivial}.
When $X$ has dimension 1, constructibly isotrivial is the same as being birationally isomorphic to a curve defined over $k$, since a constructible isomorphism can be made into a birational isomorphism via a Frobenius twist. In higher dimension and positive characteristic, however, constructibly isotrivial is a weaker notion than \emph{isotrivial}, meaning isomorphic to a variety defined over $k$, in the category of morphisms of varieties over $\overline K$. In fact, we can see from abelian varieties why \emph{constructibly isotrivial} and not \emph{isotrivial} is the right definition to use. Let $A/K$ be an abelian variety. In positive characteristic, the trace map
\[
\tau_{K/k}:\Tr(A)_K\to A
\]
may have a nontrivial kernel, which is a finite connected $K$-group scheme not defined over $k$. See~\cite[Example 4.4]{conradtrace} for an explicit example. When this kernel is nontrivial, $\Tr(A)_K$ is isotrivial, but its image in $A$ is not. $\Tr(A)_K$ and its image are constructibly isomorphic but not isomorphic. But the N\'eron-Tate height is nonetheless zero on the entire image of $\Tr(A)_K$ in $A$.

\begin{theorem}[Northcott property for vector-valued heights, base $k$ case]
Suppose $f:X\to X$ is totally non-isotrivial, or that $k$ is a finite field. Then for any $D\in\Z_{>0}$, the set
\[
\left\{x\in X(\overline K)\;:\;h_f(x)\equiv0,\; [K(x):K]\le D\right\}
\]
is finite.
\end{theorem}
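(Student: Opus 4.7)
The plan is to proceed by induction on the parameter $d$, where $d+1=\operatorname{trdeg}(K/k)$ in the base $k$ setting, and to treat the finite field case separately by a direct Moriwaki-style argument. The base case $d=0$ is a transcendence-degree-one extension, already settled in~\cite{carney} (which builds on Baker~\cite{bakerisotrivial}, Benedetto~\cite{benedetto}, and Lang--N\'eron~\cite{langneron}).

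For the inductive step, assume the theorem holds in every base $k'$ setting of strictly smaller transcendence degree. Given $f:X\to X$ totally non-isotrivial over $k$ with $\operatorname{trdeg}(K/k)=d+1$, I would choose an intermediate field $k\subset k_1\subset K$ with $\operatorname{trdeg}(k_1/k)=1$ and $\operatorname{trdeg}(K/k_1)=d$, chosen so that $(X,f)$ remains totally non-isotrivial over $k_1$ when reinterpreted in the base $k_1$ setting. The comparison between intermediate settings from Section~\ref{basek1}, together with the projection formula for the relative intersection product, implies that numerical triviality of $h_f(x)$ in the base $k$ sense implies numerical triviality in the base $k_1$ sense: pulling back $d$ nef classes from $\widehat\Pic(k_1)_{\Int}$ realizes the base $k$ pairing as the base $k_1$ degree. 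Hence the set $S=\{x:h_f(x)\equiv 0,\ [K(x):K]\le D\}$ is contained in its base $k_1$ analogue, which is finite by the inductive hypothesis applied to $(X,f)$ over $K/k_1$.

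The main obstacle is the existence of an intermediate $k_1$ over which total non-isotriviality is preserved, and this is where the Chatzidakis--Hrushovski descent framework~\cite{chatzidakis1,chatzidakis2} enters. If no such $k_1$ existed, then for every choice of intermediate transcendence-degree-one extension some positive-dimensional periodic subvariety of $X$ would be constructibly isotrivial over $k_1$, and their model-theoretic limit arguments in difference-field geometry should then produce a descent datum forcing constructible isotriviality back over $k$, contradicting the hypothesis. An alternative route avoiding this existence question is a direct contradiction strategy: if $S$ is infinite, apply Zhang's essential minimum (Section~\ref{essentialminimumsec}) to extract an $f$-preperiodic positive-dimensional subvariety $Y\subset X$ with $h_f(Y)\equiv 0$, pass to an iterate so that $Y$ is $f$-invariant, and then invoke Chatzidakis--Hrushovski's dichotomy to conclude that $(Y,f|_Y)$ must itself be constructibly isotrivial over $k$, again contradicting total non-isotriviality.

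For the case $k=\F_q$, the argument parallels Theorem~\ref{NorthcottZ} and requires no non-isotriviality input. Any projective arithmetic model for $K$ is a $(d+1)$-dimensional projective variety over $\F_q$ whose closed points have finite residue fields. Pairing the vector-valued height $h_f$ against a fixed choice of big nef classes in $\widehat\Pic(K)_{\Int}$ then produces an $\R$-valued Moriwaki-type height enjoying the Northcott property at bounded field degree by the classical counting argument over $\F_q$, and the hypothesis $h_f(x)\equiv 0$ directly yields finiteness of $S$ in this case.
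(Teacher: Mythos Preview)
Your primary inductive strategy has a genuine gap: the claim that one can always choose an intermediate $k_1$ over which $(X,f)$ remains totally non-isotrivial is unsupported. Total non-isotriviality over $k$ says no periodic subvariety descends constructibly to $k$, but a periodic subvariety could very well descend to some particular $k_1$ without descending all the way to $k$; your sketch of a ``model-theoretic limit argument'' forcing descent to $k$ from failure over every $k_1$ is not an argument, and nothing of the sort appears in~\cite{chatzidakis1,chatzidakis2}. The induction does not get off the ground.

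Your alternative route is closer in spirit to the paper's proof but still misses the key step. The paper does not use the essential minimum here at all, nor does it need $h_f(Y)\equiv 0$. Instead it fixes ample $\cH_1,\dots,\cH_d$ on a single model $\cB$, observes that $S$ is contained in $\{x: h_f^{\cH_1,\dots,\cH_d}(x)=0,\ [K(x):K]\le D\}$, and then invokes~\cite[Theorem 4.7]{chatzidakis1} to conclude this larger set is \emph{limited}. Limitedness is precisely the hypothesis of the Chatzidakis--Hrushovski lemma~\cite[Theorem 1.11]{chatzidakis1} (a limited dense set forces constructible isotriviality); your ``dichotomy'' invocation skips this verification entirely. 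Once limitedness is in hand, one simply takes the Zariski closure of $S$, notes it is $f$-stable since $h_f\circ f=q\cdot h_f$ and $f$ is $K$-rational, extracts a periodic irreducible component $Y$, and applies the lemma to $(Y,f^m)$. No induction on $d$ and no essential minimum are needed.

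Your treatment of the finite-field case is correct and is essentially what happens in the paper, though the paper does not split it off: over $\F_q$ the $\R$-valued height $h_f^{\cH_1,\dots,\cH_d}$ already satisfies classical Northcott, so the containing set is finite outright.
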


As in the base $\Z$ case, this has an immediate implication for preperiodic points and torsion points.

\begin{cor}\label{heightprepcork}
When $(X,f)$ is totally non-isotrivial, or when $k$ is a finite field,
\[
\Prep(f)=\{x\in X(\overline K)|h_f(x)\equiv 0\}.
\]
If $A$ is an abelian variety, $h_{\NT}(x)\equiv0$ if and only if $x\in A(K)$ is in 
\[
A(\overline K)_{\Tor}+\Tr_{K/k}(A)(\overline k).
\]

\end{cor}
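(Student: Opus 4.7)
I would prove the two assertions separately.

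\textbf{(I) Preperiodic $=$ canonical height zero.} The inclusion $\Prep(f)\subseteq\{h_f\equiv 0\}$ uses only the functional equation $h_f\circ f=q\cdot h_f$: if $f^n(x)=f^m(x)$ with $n>m>0$, then $(q^n-q^m)h_f(x)=0$ in the $\Q$-vector space $\widehat\Pic(K)_{\Int}$, forcing $h_f(x)=0$, which certainly gives $h_f(x)\equiv 0$. For the reverse inclusion, if $h_f(x)\equiv 0$ then $h_f(f^m(x))=q^m h_f(x)\equiv 0$ for all $m\ge 0$, so the forward orbit of $x$ lies in the set $\{y\in X(\overline K):h_f(y)\equiv 0,\ [K(y):K]\le[K(x):K]\}$, which is finite by Theorem~\ref{intronorthcott}(2) under either hypothesis (finite $k$ or totally non-isotrivial $f$). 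Hence the orbit is finite and $x\in\Prep(f)$.

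\textbf{(II) Abelian variety case, the ``$\supseteq$'' direction} (no isotriviality hypothesis needed). Write $B=\Tr_{K/k}(A)$ and $\tau:B_K\to A$ for the trace morphism. For torsion $x$: $n^2h_{\NT}(x)=h_{\NT}(nx)=0$, so $h_{\NT}(x)=0$. For $x=\tau(y)$ with $y\in B(\overline k)$: I would identify the flat adelic structure on $\tau^*\overline\Theta$ (produced by Lemma~\ref{flatexistence} combined with Tate's argument of Lemma~\ref{tateslimiting}) as a pullback from the constant model $B\times_k\cB\to\cB$ for any projective model $\cB/k$ with function field $K$, so that $y$ corresponds to a constant horizontal section. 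The self-intersection $\overline{\tau^*\Theta}^{\,n+1}$ along this section then factors through an intersection carried out on the $k$-variety $B$, which is numerically trivial in $\widehat\Pic(K)_{\Int}$; hence $h_{\NT}(\tau(y))\equiv 0$.

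\textbf{(II) Abelian variety case, the ``$\subseteq$'' direction.} Set $A':=A/\tau(B_K)$. The universal property of the trace forces $\Tr_{K/k}(A')=0$. I claim this in turn forces $(A',[2])$ to be totally non-isotrivial: a positive-dimensional $[2]$-periodic closed subvariety of $A'$ is a torsion translate of an abelian subvariety $A''\subseteq A'$, and constructible isotriviality would produce a nonzero dominant constructible map $C_K\to A''\hookrightarrow A'$ from some abelian $k$-variety $C$, which by the universal property of the trace would factor through $\Tr_{K/k}(A')_K=0$, a contradiction. Part~(I) therefore applies to $(A',[2])$, so the image $x'\in A'(\overline K)$ of a height-zero $x$ satisfies $h_{\NT}(x')\equiv 0$, whence $x'\in\Prep([2])$, which for multiplication-by-two on an abelian variety equals the torsion subgroup. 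Thus some $Nx$ lies in $\tau(B(\overline K))$; writing $Nx=\tau(y)$ one has $h_{\NT}(\tau(y))=N^2h_{\NT}(x)\equiv 0$. A Lang-N\'eron type argument applied to the isotrivial abelian variety $B_K$ (whose $K/k$-trace is $B$ itself) then yields $y\in B(\overline k)+B(\overline K)_{\Tor}$ modulo $\ker\tau$ (which is finite), giving $x\in\tau(B(\overline k))+A(\overline K)_{\Tor}$.

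\textbf{Main obstacle.} The core technical difficulty is the final Lang-N\'eron step: upgrading the classical statement ``height-zero points of an isotrivial abelian variety are, modulo its trace, torsion'' into the vector-valued adelic framework of Section~\ref{defssection}. In characteristic zero $\tau$ is injective and the classical theorem applies directly; in positive characteristic the possibly-nontrivial infinitesimal kernel of $\tau$ must be absorbed, which is exactly why the definition uses \emph{constructibly} isotrivial. Equally, the ``$\supseteq$'' argument requires a compatibility between Tate's limit (Lemma~\ref{tateslimiting}) and the $K/k$-trace construction of Section~\ref{trace}, identifying the canonical adelic structure on $\tau^*\Theta$ with the one induced by $B\times_k\cB$; I expect this to be technical but routine.
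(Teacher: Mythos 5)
Your part (I) is exactly the paper's argument: one inclusion is the functional equation $h_f(f(x))=qh_f(x)$, the other is finiteness of the bounded-degree, height-zero set from the base $k$ Northcott theorem (Theorem~\ref{intronorthcott}(2)), applied to the forward orbit; the paper treats this as immediate and you have filled it in correctly.

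For part (II) you take a genuinely different, and more roundabout, route than the paper intends, and it has a real gap at its last step. The paper's abelian statement is meant as a (vector-valued) restatement of the Lang--N\'eron theorem: if $h_{\NT}(x)\equiv0$, then intersecting with ample classes $\cH_1,\dots,\cH_d$ coming from a single projective model $\cB$ for $K$ gives $h_{\NT}^{\cH_1,\dots,\cH_d}(x)=0$, and as noted in Section~\ref{heightssection} such a choice is precisely the data of a product-formula set of places on $K$, so the classical Lang--N\'eron theorem \cite{langneron,conradtrace} applies directly to $A$ and yields $x\in A(\overline K)_{\Tor}+\tau\bigl(\Tr_{K/k}(A)(\overline k)\bigr)$; the converse inclusion is the functional equation for torsion together with the already-recorded fact that $\widehat\Tr_{K/k}\Pic^0$ classes are flat and numerically trivial, which is what makes the height vanish on the image of the trace. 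Your proposal instead quotients by $\tau(B_K)$, argues total non-isotriviality of $(A',[2])$, invokes part (I), and then needs ``a Lang--N\'eron type argument'' for the constant abelian variety $B_K$ in the vector-valued framework --- and this last step, which you yourself flag as the core difficulty, is left unproved even though it is a statement of exactly the same nature as the one being established. Once you notice the specialization trick above, that step is closed by citing the classical theorem, but the same trick applied to $A$ itself makes the entire detour through $A'$ unnecessary. Two further points in your reduction are asserted without proof and would need justification or citation: that $\Tr_{K/k}\bigl(A/\tau(\Tr_{K/k}(A)_K)\bigr)=0$ (true, see \cite{conradtrace}, but not obvious, especially in positive characteristic), and that every positive-dimensional $[2]$-periodic closed subvariety of $A'$ is a torsion translate of an abelian subvariety; the latter can in fact be avoided, since your Albanese-plus-universal-property argument can be run on any constructibly isotrivial periodic subvariety directly, without first classifying it.
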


This generalizes a theorem of Baker~\cite{bakerisotrivial} for rational functions on $\P^1$ (also proven by Benedetto~\cite{benedetto} for polynomials), and the Lang-N\'eron theorem~\cite{langneron} for abelian varieties. We prove the theorem.

\begin{proof}
We use the following result of Chatzidakis and Hrushovski~\cite[Theorem 1.11]{chatzidakis1}, stated more directly in the context of polarized endomorphisms in~\cite[Remark 5.6]{ChatzidakisICM}:
\begin{lemma}
Let $K/k$ be a finitely generated regular field extension, and $f:X\to X$ a polarized endomorphism of projective varieties. Then if a limited set of $X(\overline K)$ is dense, $(X,f)$ is constructibly isotrivial.
\end{lemma}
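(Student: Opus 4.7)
The plan is an argument by contradiction, using the Chatzidakis-Hrushovski lemma cited above in the totally non-isotrivial case, and mirroring the proof of Theorem~\ref{NorthcottZ} in the finite field case. Let $S_D:=\{x\in X(\overline K):h_f(x)\equiv0,\,[K(x):K]\le D\}$ and suppose for contradiction that $S_D$ is infinite. From $h_f(f(x))=q\,h_f(x)$ and $[K(f(x)):K]\le[K(x):K]$, we have $f(S_D)\subseteq S_D$. Since each $x\in S_D$ has Galois orbit of size at most $D$, the Zariski closure $\overline{S_D}\subseteq X$ must be positive-dimensional. Because $f$ permutes the finitely many irreducible components of $\overline{S_D}$, some iterate $f^m$ fixes each of them; fix a positive-dimensional irreducible component $Y\subseteq\overline{S_D}$ with $f^m(Y)=Y$. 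Then $(Y,f^m|_Y,L|_Y)$ is a polarized dynamical system with $h_{f^m|_Y}=h_f|_{\widetilde Y}$, and $S_D\cap Y$ is Zariski dense in $Y$, still consisting of bounded-degree, canonical-height-zero points.

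In the totally non-isotrivial case, the key step is to check that $S_D\cap Y$ is a \emph{limited set} in the Chatzidakis-Hrushovski sense. The bounded-degree condition controls the field of definition of each point, while $h_f(x)\equiv 0$ together with Tate's limiting construction of $\oL_f$ from Lemma~\ref{tateslimiting} forces the $f^m$-orbit of $x$ to be adelically bounded; this is the translation of the model-theoretic \emph{limited} of~\cite{chatzidakis1,chatzidakis2} into the adelic setting developed here. The cited lemma then gives a constructible isomorphism $(Y,f^m|_Y)\cong(Z_K,g_K)$ for some projective $k$-variety $Z$ and $g:Z\to Z$ defined over $k$. If $Y=X$, then $(X,f)$ is itself constructibly isotrivial (via the iterate $f^m$, and hence via $f$), contradicting the first clause of total non-isotriviality; if $Y\subsetneq X$, then $Y$ is a positive-dimensional $f$-periodic closed subvariety that is constructibly isotrivial, contradicting the second clause.

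For the finite field case $k=\F_q$, the argument mirrors the proof of Theorem~\ref{NorthcottZ}: one replaces Moriwaki's Theorem 4.3 over extensions of $\Q$ by its standard analogue for extensions finitely generated over $\F_q$, obtained from the classical Northcott property for function fields of curves over finite fields, together with the comparison of heights across intermediate towers $K/k_1/\F_q$ from Section~\ref{basek1} (taking $k_1$ of transcendence degree one over $\F_q$). Choosing nef and big $\oH_1,\dots,\oH_d\in\widehat\Pic(K)_{\Int}$ and approximating $\oL_f$ by some $\oL_0\in\widehat\Pic(X)_{\Mod}$ defined on a single projective model with $L_0=L$, the condition $h_f(x)\equiv 0$ specializes to a uniform bound $h_{\oL_0}^{\oH_1,\dots,\oH_d}(x)\le C$, to which the finite-field Moriwaki-type finiteness theorem applies.

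The main obstacle is verifying rigorously that the arithmetic hypothesis $h_f(x)\equiv 0$ with $[K(x):K]\le D$ matches the Chatzidakis-Hrushovski notion of a \emph{limited set} in their difference-algebraic framework. This dictionary between adelic heights and model-theoretic limitedness is where the new conceptual input of the proof is concentrated; once it is in place, the dynamical and geometric reductions above are routine.
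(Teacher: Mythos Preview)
You have not written a proof of the stated lemma. The lemma is the Chatzidakis--Hrushovski result~\cite[Theorem 1.11]{chatzidakis1} (cf.~\cite[Remark 5.6]{ChatzidakisICM}), which the paper \emph{cites} as an external input and does not prove; its proof is model-theoretic and lies entirely outside the scope of this paper. What you have written is instead a proof of the surrounding \emph{theorem} (Northcott in base $k$) that \emph{invokes} the lemma, which you even refer to as ``the Chatzidakis--Hrushovski lemma cited above.'' So as a proof of the lemma, your proposal is circular.

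As a proof of the enclosing theorem, your argument is close to the paper's but with two differences worth noting. First, you identify as ``the main obstacle'' the verification that $S_D$ is a limited set in the sense of~\cite{chatzidakis1}; the paper dispatches this in one line by citing~\cite[Theorem 4.7]{chatzidakis1}, which states precisely that bounded-degree, canonical-height-zero points (for a real-valued specialization $h_f^{\cH_1,\dots,\cH_d}$) form a limited set. You do not need Tate's limiting construction or any adelic boundedness argument here. Second, you split off the finite-field case and propose a separate Moriwaki-style argument; the paper treats both hypotheses uniformly via the same limited-set mechanism, since over a finite $k$ a limited set of bounded degree is automatically finite. The remaining dynamical reduction---passing to the Zariski closure, extracting a periodic irreducible component $Y$, and applying the lemma to $(Y,f^m)$---is the same in both.
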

Fix a model $\cB$ for $K$ and pick any ample 
$\cH_1,\dots,\cH_d\in \Pic(\cB)_{\Int}\subset\widehat\Pic(K)_{\Int}$. 
Then
\begin{multline*}
S:=\left\{x\in X(\overline K):h_f(x)\equiv0, [K(x):K]\le D\right\}\\
\subset\left\{x\in X(\overline K):h_f^{\cH_1,\dots,\cH_d}(x)=0, [K(x):K]\le D\right\},
\end{multline*}
and by ~\cite[Theorem 4.7]{chatzidakis1}, the right hand side is a limited set. If $S$ is not finite, its Zariski closure, $\overline S$, is a positive dimensional, possibly reducible closed subvariety of $X$. Because $f(S)\subset S$, we have $f(\overline S)\subset\overline S$, and thus $\overline S$ contains a periodic, irreducible component $Y$. We thus have a positive dimensional sub-dynamical system $f^m:Y\to Y$, in which the set of canonical height zero points is dense. Thus by the lemma, $(Y,f^m)$ is constructibly isotrivial.

\end{proof}

While, as noted previously, the Northcott property fails in base $k$, we expect that the following height gap statement should hold:

\begin{conj}
Suppose $(X,f)$ is totally non-isotrivial. Choose some transcendence degree one intermediate field $k_1$ as usual. Then there exists some $\alpha\in\widehat\Pic(k_1)_{\Int}$ with positive arithmetic degree such that the set
\[
\left\{x\in X(\overline K)\;:\;h_f(x)\le\pi^*\alpha,\; [K(x):K]\le D\right\}
\]
is finite.
\end{conj}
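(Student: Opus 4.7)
The plan is to deduce the conjecture from the height-zero Northcott theorem of Corollary~\ref{heightprepcork} via a specialization along the intermediate field, by reducing the pseudo-effective inequality $h_f(x)\le\pi^*\alpha$ to an $\R$-valued bounded-height condition to which Chatzidakis-Hrushovski's limited sets theorem~\cite{chatzidakis1} can be applied.

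First I would fix a tower of projective arithmetic models $\cX\to\cB\to C$, where $C/k$ is the projective curve with function field $k_1$, $\cB\to C$ is a projective model of $K$ over $k_1$ (constructed via Nagata-Raynaud-Gruson as in Section~\ref{basek1}), and $\cX\to\cB$ is a projective model of $X/K$. After approximating in the topology of $\widehat\Pic(k_1)_{\Cont}$, any candidate $\alpha\in\widehat\Pic(k_1)_{\Int}$ with $\widehat\deg(\alpha)>0$ may be represented by an ample $\Q$-line bundle $\cH$ on $C$, so that $\pi^*\alpha$ is the corresponding pullback to $\cX$.

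The key reduction is to pair the pseudo-effective inequality $\pi^*\alpha-h_f(x)\ge0$ against a carefully chosen family of nef test classes $\oH_1,\dots,\oH_d\in\widehat\Pic(K)_{\Nef}$. I would take $\oH_1,\dots,\oH_{d-1}$ to be pullbacks of nef big classes on $C$, and $\oH_d$ to be any adelic line bundle nef on $\cB$ and ample on the generic fiber of $\cB\to C$. By the projection formula for Deligne's product (Section~\ref{intersectionssec}),
\[
\pi^*\alpha\cdot\oH_1\cdots\oH_d = c\,\widehat\deg(\alpha)
\]
for an explicit constant $c>0$ depending only on the numerical classes involved. Consequently the Moriwaki-style $\R$-valued height $\widehat h(x):=h_f(x)\cdot\oH_1\cdots\oH_d$ is uniformly bounded above on the set in question, while the degree bound $[K(x):K]\le D$ is preserved.

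The main obstacle is converting this bounded-$\widehat h$ and bounded-degree condition into honest finiteness, and this is strictly stronger than the zero-height Northcott already available. The route I would pursue is to appeal to~\cite[Theorem 4.7]{chatzidakis1}: bounded-degree, bounded-$\widehat h$ subsets of $X(\overline K)$ are \emph{limited} in their sense, so an infinite such set would be Zariski dense in a closed subvariety whose $f$-periodic irreducible components are constructibly isotrivial over $k$ by~\cite[Theorem 1.11]{chatzidakis1}, contradicting total non-isotriviality of $(X,f)$. The delicate point, and the main technical work, is verifying that the limited-set hypothesis holds uniformly across the admissible choices of $\oH_1,\dots,\oH_d$ (and not merely for one fixed polarization producing $\widehat h$), since the pseudo-effectivity order $\le$ on $\widehat\Pic(K)_{\Int}$ is defined by testing against all nef classes simultaneously; I expect this step to require a compactness or effective approximation argument on the nef cone of $\widehat\Pic(K)$, together with essential minimum estimates in the sense of Section~\ref{essentialminimumsec} to upgrade the generic bound to one that persists under small perturbation of the test classes.
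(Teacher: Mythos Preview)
This statement is labeled a \emph{Conjecture} in the paper, and the paper explicitly does not prove it: immediately after stating it, the text notes that it is known for rational maps on $\P^1$ (Baker) and for abelian varieties (Lang--N\'eron), but ``appears unknown in general.'' So you are attempting to prove an open problem, and your proposal has a genuine gap which is precisely the reason the statement remains conjectural.

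The gap is not where you locate it. Your reduction to an $\R$-valued bound is fine: the pseudo-effective inequality $h_f(x)\le\pi^*\alpha$ is tested against \emph{all} nef tuples, so pairing with a single fixed choice $\oH_1,\dots,\oH_d$ already gives $h_f^{\oH_1,\dots,\oH_d}(x)\le(\pi^*\alpha)\cdot\oH_1\cdots\oH_d$; no uniformity over the nef cone is needed, and no essential-minimum perturbation argument is required. So the ``delicate point'' you flag is a non-issue.

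The real obstruction is the passage from ``limited'' to ``dense in an $f$-periodic subvariety.'' In the paper's proof of the height-zero Northcott theorem, the set $S=\{h_f(x)\equiv0,\ [K(x):K]\le D\}$ satisfies $f(S)\subset S$, because $h_f(f(x))=qh_f(x)=0$ and $[K(f(x)):K]\le[K(x):K]$. This forward invariance is what forces $\overline S$ to have an $f$-periodic irreducible component $Y$, to which the Chatzidakis--Hrushovski criterion can then be applied. Your set $S=\{h_f(x)\le\pi^*\alpha,\ [K(x):K]\le D\}$ has no such invariance: $h_f(f(x))=qh_f(x)$ need not stay below $\pi^*\alpha$, and taking preimages destroys the degree bound. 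Consequently there is no reason for $\overline S$ to be $f$-stable or to contain any $f$-periodic positive-dimensional subvariety, and the isotriviality criterion (which requires a polarized endomorphism of the subvariety in question together with a dense limited set) simply does not apply. Bridging this gap is exactly the content of the height-gap conjecture; the known cases (Baker for $\P^1$, Lang--N\'eron for abelian varieties) use structure specific to those settings rather than the general limited-set machinery.
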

This is proven by Baker~\cite{bakerisotrivial} for rational functions on $\P^1$, and follows from the Lang-N\'eron Theorem (see~\cite{conradtrace}) for abelian varieties, but appears unknown in general.

\subsection{Comparing relative heights}\label{heightscompare}

Consider the example $K=\Q(t)$, and $k=\Q$. This can be viewed in base $\Z$, with $d=1$, or in base $k$, with $d=0$, and the two viewpoints carry different height functions. We now discus how to compare these.  

Thus far our heights have been defined on $X(\overline K)$ with respect to the finitely generated extension $K/k$. Suppose this extension has transcendence degree at least one in base $\Z$, or at least two in base $k$, and fix an intermediate subfield $k\subset k_1\subset K$. We further require that each subextension have positive transcendence degree, except in the case that $k_1$ is a number field, where we view $k_1/k$ as a base $\Z$ extension, and $K/k_1$ as base $k$. For the remainder of this section only, we introduce more specific notation, to avoid ambiguity: Let the three groups $\widehat\Pic(K/k)_{\Int}$, $\widehat\Pic(K/k_1)_{\Int}$, and $\widehat\Pic(k_1/k)_{\Int}$ refer to the construction in Section~\ref{defssection} for the finitely generated extensions of fields $K/k$, $K/k_1$, and $k_1/k$, respectively. Similarly, instead of writing $\widehat\Pic(X)_{\Int}$, we will consider $\widehat\Pic(X/K/k)_{\Int}$ and $\widehat\Pic(X/K/k_1)_{\Int}.$

Chose an adelic line bundle $\oL\in\widehat\Pic(X/K/k)$. This defines a height 
\[
h_{\oL}(Z)\in\widehat\Pic(K/k)_{\Int},
\]
where $Z$ is any closed $\overline K$-subvariety of $X$. Let $e$ be the transcendence degree of $k_1/k$, and choose any $\oH_1,\dots,\oH_e\in\widehat\Pic(k_1/k)_{\Int}$. By pulling back $\rho:\Spec K\to\Spec k_1$, these lift to 
\[
\rho^*\oH_1,\dots,\rho^*\oH_e\in\widehat\Pic(K/k)_{\Int}.
\]
Now consider the intersection
\[
h_{\oL}(Z)\cdot (\rho^*\oH_1)\cdots(\rho^*\oH_e).
\]
By the projection formula for the relative intersection pairing, this produces a vector in $\widehat\Pic(K/k_1)_{\Int}$. Note that on a fixed projective arithmetic model $\cB$ for $K$ with a flat projective morphism down to a projective arithmetic model $\cC$ for $k_1$, the adelic line bundles $\rho^*\oH_i$ correspond to vertical adelic line bundles in $\widehat\Pic(\cB_{k_1}/k_1/k)_{\Vrt}.$

Thus, letting $\oL_{k_1}$ be the restriction of $\oL$ to $\widehat\Pic(X/K/k_1)$, we've produced from $h_{\oL}$ a height
\[
h_{\oL_{k_1}}(Z)\in\widehat\Pic(K/k_1)_{\Int}.
\]

This height carries less information than the original height all the way down to $k$. For example, if $f:X\to X$ is a dynamical system, there could exist a periodic subvariety defined over $k_1$ but not over $k$, so that $X$ may be totally non-isotrivial over $k$, but not over $k_1$. Thus the canonical height over $k$ is granular enough for the Northcott property to hold, but the canonical height over $k_1$ is not. On the other hand, if $X$ is totally non-isotrivial over $k_1$, the height over $k_1$ may be easier to work with, and thus preferable. 




\section{The essential minimum}\label{essentialminimumsec}
Having established the main properties of vector-valued heights on $X(\overline K)$, we now relate the height of a variety to the heights of the points it contains. This extends Zhang's \emph{essential minimum} result~\cite[Theorem 1.11]{Z95} and subsequent work of Gubler~\cite[Theorem 4.1]{G07} and Moriwaki~\cite[Corollary 5.2]{moriwakifg1} to the setting of this paper.

\begin{definition}\label{moriwakicondition}
Let $\oH\in\widehat\Pic(K)_{\Nef}$. We say that $\oH$ satisfies the \emph{Moriwaki condition} provided that the following hold:
\begin{enumerate}
\item
$\oH$ is induced by a single (nef) $\ocH\in\widehat\Pic(\cB)$ for some projective arithmetic model $\cB$ for $K$,
\item
$\oH^{d+1}=0$, and
\item
$H_{k_1}^d>0$ for some intermediate field $k\subset k_1\subset K$ such that $k_1$ has transcendence degree one over $k$. In base $\Z$, we must have $k_1=\Q$.
\end{enumerate}
\end{definition}

\begin{remark}
Condition (3) in base $k$ is equivalent to the existence of an ample line bundle $A\in\Pic(\cB)$ such that $H^d\cdot A>0$, which avoids the need to find a specific $k_1$, however, we won't require this fact.
\end{remark}

\begin{definition}
To keep our notation more concise, we write ${h_{\oL}^{\oH}:=h_{\oL}^{\oH,\dots,\oH}}$.
Define $\lambda_1^{\oH}(X,\oL)$, the \emph{essential minimum} of $X$ with respect to $\oL$ and $\oH$, as
\[
\lambda_1^{\oH}(X,\oL):=\sup_{U\subset X}\inf_{x\in U(\overline K)}h^{\oH}_{\oL}(x),
\]
where the supremum is taken over all Zariski open $U\subset X$.
\end{definition}

\begin{theorem}\label{essentialminimum}

Suppose $\oH$ satisfies the Moriwaki condition and $\oL\in\widehat\Pic(X)_{\Nef}$, with $L$ ample on $X$. Then
\[
\lambda_1^{\oH}(X,\oL)\ge h_{\oL}^{\oH}(X).
\]
\end{theorem}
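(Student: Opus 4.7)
The approach is to reduce to the single-model essential minimum theorem of Moriwaki~\cite[Corollary 5.2]{moriwakifg1} and then pass to the limit in $\widehat\Pic(X)_{\Int}$. The Moriwaki condition on $\oH$ is precisely the input Moriwaki requires: the induced $\R$-valued function $h^{\oH}_{\ocL}(\cdot) = h_{\ocL}(\cdot) \cdot \oH^d$ attached to any nef Hermitian line bundle $\ocL$ on a projective arithmetic model $\cX \to \cB$ for $X/K$ (with $\cB$ carrying $\ocH$) is exactly Moriwaki's height associated to the polarization $(\ocH,\dots,\ocH)$. So in the single-model case the claim $\lambda_1^{\oH}(X,\ocL) \ge h^{\oH}_{\ocL}(X)$ is already known.

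For a general $\oL \in \widehat\Pic(X)_{\Nef}$, by definition there is a Cauchy sequence $\{\ocL_m\}_{m\ge 0}$ of nef Hermitian line bundles on projective arithmetic models $\cX_m \to \cB_m$ representing $\oL$; by passing to a common refinement we may assume each $\cB_m$ dominates the fixed model $\cB$ on which $\ocH$ lives. The single-model case gives
\[
\lambda_1^{\oH}(X,\ocL_m) \ge h^{\oH}_{\ocL_m}(X) \quad \text{for every } m.
\]
Fix a strictly effective $\ocD$ on $\cB$ giving the topology on $\widehat\Pic(X)_{\Cont}$. For any $\epsilon > 0$ there exists $m_0$ such that
\[
-\epsilon\,\pi^*\ocD \;\le\; \oL - \ocL_m \;\le\; \epsilon\,\pi^*\ocD \qquad (m \ge m_0).
\]
Crucially, $\oL - \ocL_m$ is vertical since both bundles have the same generic fiber $L$ on $X$. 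Applying the telescoping identity $\oL^{n+1} - \ocL_m^{n+1} = \sum_{i=0}^n \oL^i \cdot (\oL-\ocL_m) \cdot \ocL_m^{n-i}$, bounding each summand using nefness of $\oL$ and $\ocL_m$, intersecting with $\oH^d$, and applying the projection formula against the vertical class $\pi^*\ocD$ yields a uniform estimate
\[
\bigl| h^{\oH}_{\oL}(Z) - h^{\oH}_{\ocL_m}(Z) \bigr| \;\le\; C\epsilon
\]
valid for every closed $\overline K$-subvariety $Z \subseteq X$, with $C$ depending only on $\ocD\cdot\oH^d$ and on $L^n$. For a closed point $x \in X(\overline K)$, the factor $[K(x):K]$ produced by pushing $\pi^*\ocD|_{\widetilde{\{x\}}}$ down to $\cB$ cancels with the $\deg(x)$ in the denominator of $h^{\oH}_{\oL}(x)$, which is what makes $C$ independent of $x$. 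Combining,
\[
\lambda_1^{\oH}(X,\oL) \;\ge\; \lambda_1^{\oH}(X,\ocL_m) - C\epsilon \;\ge\; h^{\oH}_{\ocL_m}(X) - C\epsilon \;\ge\; h^{\oH}_{\oL}(X) - 2C\epsilon,
\]
and letting $\epsilon \to 0$ gives the theorem.

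The main obstacle is justifying the uniform constant $C$ in the third step — the estimate on $|h^{\oH}_{\oL}(x) - h^{\oH}_{\ocL_m}(x)|$ has to be independent of $x$, even though the subvariety $\widetilde{\{x\}}$ and its degree vary, because $\lambda_1$ is defined by an infimum over all closed points of a Zariski open set. The uniformity hinges on two features: the approximation error $\oL - \ocL_m$ is vertical, so it restricts to a pullback of something on $\cB$ along $\widetilde{\{x\}} \to \Spec K \to \cB$; and the factor $\deg(x)$ produced by this pushforward is canceled by the normalizing $\deg(x)$ built into the definition of $h^{\oH}_{\oL}(x)$. Once this is in hand, the rest of the argument is formal.
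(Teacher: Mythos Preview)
Your approximation step (passing from model $\ocL_m$ to the limit $\oL$) is correct and is exactly what the paper does; the paper dispatches it in one sentence, and your more detailed justification of the uniform constant $C$ via the cancellation of $\deg(x)$ is sound.

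The gap is in the single-model step. You invoke Moriwaki~\cite[Corollary 5.2]{moriwakifg1}, but that result is proved only in the base $\Z$ setting, i.e.\ for $K$ finitely generated over $\Q$. The theorem here is stated for both the base $\Z$ and the base $k$ cases, and the base $k$ case with $d>0$ (i.e.\ $K$ of transcendence degree $\ge 2$ over an arbitrary field $k$) is not covered by any result you can cite: Moriwaki works over $\Z$, and Gubler~\cite[Theorem 4.1]{G07} handles only $d=0$. So in base $k$ your proof simply has no input for the single-model inequality.

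The paper fills this gap by re-running Moriwaki's argument in a form that works uniformly in both settings. First it proves a weak form (Lemma~\ref{essentialminimasublemma1}): if $h_{\ocL}^{\ocH}(X)>0$ then $\lambda_1^{\ocH}(X,\ocL)\ge 0$, using the trick of replacing $\ocL$ by $\ocL+m\pi^*\ocH$ (harmless since $\ocH^{d+1}=0$) to force $\ocL^{n+d+1}>0$, then producing a small section via arithmetic Bertini (Moriwaki in base $\Z$, the analogous statement from~\cite{G07} in base $k$). It then upgrades this to the full inequality by a scaling argument with an auxiliary class $\ocA=\ocH+\cO(F)$ satisfying $\ocA\cdot\ocH^d>0$. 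If you want your proof to cover the base $k$ case you must supply this argument, or something equivalent, for the single-model step.
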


When $d=0$, this is proven in~\cite[Theorem 1.1]{Z95} in the base $\Z$ case, and in~\cite[Theorem 4.1]{G07} in base $k$. In base $\Z$, Moriwaki proves this for general $d$ in~\cite[Corollary 5.2]{moriwakifg1} and model metric $\oL$.  
Here we adapt Moriwaki's proof to both the base $\Z$ and base $k$ cases.

It suffices to prove this, as Moriwaki does, under the additional assumption that $\oL$ comes from a single model $\ocL$ on $\cX\to\cB$, since both the essential minimum and the height of $X$ are approximated uniformly as $\oL$ is approximated by such models. We first prove a weaker statement.

\begin{lemma}\label{essentialminimasublemma1}
Suppose $\calL\in\Pic(\cX)$ is relatively nef over $\cB$, that $\calL_K$ is ample on $X$, that the Hermitian metric on $\ocL$ is semipositive, and that $h_{\ocL}^{\ocH}(X)>0$. Then $\lambda_1^{\ocH}(X,\ocL)\ge0$.
\end{lemma}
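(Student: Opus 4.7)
The strategy is to argue by contradiction via arithmetic bigness on the projective model $\cX$, exploiting the fact that $\pi^*\ocH$ is invisible to the $\ocH$-twisted height. Suppose $\lambda_1^{\ocH}(X,\ocL) < 0$; the definition of the essential minimum yields $\epsilon>0$ and a Zariski dense $S\subset X(\overline K)$ with $h^{\ocH}_{\ocL}(x) < -\epsilon$ for every $x\in S$. The projection formula along the finite cover $\overline{\widetilde x}\to\cB$ together with condition (2) of the Moriwaki condition gives $h^{\ocH}_{\pi^*\ocH}(x) = \ocH^{d+1} = 0$ for every $x \in X(\overline K)$, so for $\ocM_{t}:=\ocL+t\,\pi^*\ocH$ one has $h^{\ocH}_{\ocM_{t}}(x)=h^{\ocH}_{\ocL}(x)$ for all rational $t\ge0$. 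It therefore suffices to show $\lambda_1^{\ocH}(X,\ocM_{t})\ge 0$ for some small $t>0$.

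The core intersection-theoretic input is the top self-intersection of $\ocM_{t}$ on the arithmetic variety $\cX$ of absolute dimension $n+d+1$. Expanding binomially and using $\pi^*\ocH^{j}=0$ for $j>d+1$ together with $\ocH^{d+1}=0$,
\[
\ocM_{t}^{\,n+d+1} = \binom{n+d+1}{d}\,t^{d}\,\ocL^{n+1}\cdot\pi^*\ocH^{d} + \sum_{i>n+1}\binom{n+d+1}{i}\,t^{n+d+1-i}\,\ocL^{i}\cdot\pi^*\ocH^{n+d+1-i}.
\]
By the projection formula the leading coefficient in $t$ is $\binom{n+d+1}{d}(n+1)(L^{n})\,h^{\ocH}_{\ocL}(X)>0$ by hypothesis. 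Each remaining summand is non-negative under our positivity hypotheses (after, if needed, replacing $\ocL$ by $\ocL+s\pi^*\ocA$ for a small rational $s>0$ and an ample $\ocA$ on $\cB$ whose existence and positive intersection with $\ocH^{d}$ are guaranteed by Moriwaki condition (3), and letting $s\to 0$ at the end). Hence $\ocM_{t}^{\,n+d+1}>0$ for all sufficiently small $t>0$, so $\ocM_{t}$ is nef and arithmetically big on $\cX$.

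The final step is the classical ``small points via small sections'' argument. The arithmetic Hilbert--Samuel / bigness theorem produces, for each $N\gg0$, a nonzero $s\in H^0(\cX, N\ocM_{t})$ with $\|s\|_{\sup}\le 1$ whose vanishing locus is a proper closed subset of $\cX$; one arranges this locus so that $S$ remains dense in $X$ outside it. For any such $x\in S$, the product formula applied to $-\log\|s\|$ along the horizontal closure $\overline{\widetilde x}$ yields $h_{\ocM_{t}}(x)\ge 0$ in $\widehat{\Pic}(K)_{\Int}$, and intersecting with $\ocH^{d}$ gives $h^{\ocH}_{\ocM_{t}}(x)\ge 0$, contradicting the standing bound $h^{\ocH}_{\ocM_{t}}(x)=h^{\ocH}_{\ocL}(x)<-\epsilon$. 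The principal obstacle is this final small-section step, since arithmetic Hilbert--Samuel on $\cX$ a priori controls heights of $\overline\Q$- or $\overline k$-valued points lying on closed fibers of $\cB$, whereas we need bounds for the relative heights of $\overline K$-points on the generic fiber $X$; bridging this gap via the projection formula and the cancellation of vertical $\pi^*\ocH$ contributions from condition (2) is the heart of the argument.
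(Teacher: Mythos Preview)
Your overall strategy coincides with the paper's: twist $\ocL$ by $t\,\pi^*\ocH$, observe that this leaves the $\ocH$-height unchanged because $\ocH^{d+1}=0$, force the top self-intersection of the twist to be positive, produce a small global section, and conclude that points off its divisor have non-negative $\ocH$-height. However, one step is genuinely wrong as written.

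The error is the direction of $t$. Your claim that ``each remaining summand is non-negative under our positivity hypotheses'' is not justified: $\ocL$ is only \emph{relatively} nef over $\cB$ with semipositive metric, not nef on $\cX$, so terms like $\ocL^{n+d+1}$ (the constant term in your expansion) can perfectly well be negative. The parenthetical perturbation by $s\pi^*\ocA$ does not repair this, since $\pi^*\ocA$ is again vertical and does not make $\ocL$ horizontally nef. What actually works, and what the paper does, is to take $t$ (their $m$) \emph{large}: since the $t^d$ coefficient $\binom{n+d+1}{d}(n+1)(L^n)\,h^{\ocH}_{\ocL}(X)$ is positive and is the leading term, the polynomial is positive for $t\gg 0$ regardless of the signs of the lower-order terms. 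With this fix, the correct input is Moriwaki's arithmetic Bertini theorem \cite[Theorem~2.1]{moriwakifg1} in base $\Z$ (or \cite[Lemma~4.2]{G07} in base $k$), whose hypotheses---relatively nef, semipositive metric, positive top self-intersection---are exactly what you have after the twist.

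Finally, the ``principal obstacle'' you flag in the last paragraph is not an obstacle at all, and your own sketch already resolves it. A small global section $s$ on $\cX$ restricts to a small section on the closure $\overline{\widetilde x}\subset\cX$ for any $x\in X(\overline K)$ off $\div(s)_K$; this makes $h_{\ocM_t}(x)$ an effective class in $\widehat\Pic(K)$, and pairing with the nef $\ocH^d$ gives $h^{\ocH}_{\ocM_t}(x)\ge 0$ directly. The paper dispatches this in one sentence. There is no need to frame the proof by contradiction or to worry about closed fibers of $\cB$.
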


\begin{proof}
By the assumption $\ocH^{d+1}=0$, we have an equality of height functions (on subvarieties of any dimension)
\[
h^{\ocH}_{\ocL}=h^{\ocH}_{\ocL+m\pi^*\ocH}
\]
for any rational number $m$. Since 
\[
\left(\ocL+m\pi^*\ocH\right)^{n+d+1}=\binom{n+d+1}{d}\ocL^{n+1}\ocH^{d}m^d+O\left(m^{d-1}\right),
\]
by replacing $\ocL$ with $\ocL+m\pi^*\ocH$ for sufficiently large positive $m$, it suffices to prove the result under the additional assumption that $\ocL^{n+d+1}>0$.

After possibly replacing $\ocL$ once more with a positive multiple, by Moriwaki's arithmetic Bertini theorem~\cite[Theorem 2.1]{moriwakifg1}, we can find a non-zero global section $s\in H^0(\cX,\calL)$ which is small in the sense that the supremum norm $||s||_{\Sup}<1$ on $X(\C)$ (we simply ignore this condition in base $k$; the equivalent result is also in~\cite[Lemma 4.2]{G07}). Let $Y=\div(s)_K\subset X$. Then for any $x\in X\backslash Y(\overline K)$,
\[
h^{\ocH}_{\ocL}(x)\ge0,
\]
since this height can be computed using the section $s$, and $\ocH$ is nef. Thus, $\lambda_1^{\ocH}(X,\ocL)\ge0$. 

\end{proof}

We now prove Theorem~\ref{essentialminimum}.

\begin{proof}
Fix $k_1$, such that $\ocH_{k_1}^d>0$. We view $\ocH$ as a metrized line bundle $(\cH_{k_1},||\cdot||)\in\widehat\Pic(\cB_{k_1})_{\Int}$. 
Define $\ocA=\ocH+\cO(F)$, where $F$ is any irreducible fiber of $\cB$ over the unique projective curve with function field $k_1$. Since $\ocH^{d+1}=0$, we have
\[
\ocA\cdot\ocH^d>0.
\]
Thus we can let $\lambda$ be any rational number such that
\[
\lambda<\frac{h_{\ocL}^{\ocH}(X)}{\ocA\cdot\ocH^{d}},
\]
and since $\ocA\in\widehat\Pic(K)_{\Int}$ and $\left(\pi^*\ocA\right)^2$ is numerically trivial on $X$, we have
\[
\left(\ocL-\lambda\pi^*\ocA\right)^{n+1}\cdot\left(\pi^*\ocH\right)^d>0,
\]
and thus also
\[
\left(\ocL+\lambda\pi^*(\ocH-\ocA)\right)^{n+1}\cdot\left(\pi^*\ocH\right)^d>0.
\]
Now we can apply Lemma~\ref{essentialminimasublemma1} to $\ocL+\lambda\pi^*(\ocH-\ocA)$, and get
\[
\lambda_1^{\ocH}\left(X, \ocL+\lambda\pi^*(\ocH-\ocA)\right)=\lambda_1^{\ocH}\left(X, \ocL-\lambda\pi^*\ocA\right)\ge0.
\]
by linearity, this means
\[
\lambda_1^{\ocH}\left(X, \ocL\right)\ge\lambda\ocA\cdot\ocH^d,
\]
and since $\lambda$ can be any rational number below $h_{\ocL}^{\ocH}(X)/(\ocA\cdot\ocH^d)$, we have
\[
\lambda_1^{\ocH}(X,\ocL)\ge h_{\ocL}^{\ocH}(X).
\]
\end{proof}

The following theorem will allow us to make use of the essential minimum without having to fix specific $\oH$ satisfying the Moriwaki condition. 

\begin{lemma}\label{moriwakinumericallytrivial}
Suppose $\oM\in\widehat\Pic(K)_{\Nef}$ has the property that $\oM\cdot\oH^d=0$ for every $\oH\in\widehat\Pic(K)_{\Nef}$ which satisfies the Moriwaki condition. Then $\oM\equiv0$.
\end{lemma}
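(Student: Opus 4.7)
The plan is to reduce, via multilinearity, continuity, and polarization, to showing $\oM\cdot\oN^d=0$ for every nef $\oN$ on a single projective arithmetic model $\cB$ for $K$, and then to perturb $\oN$ into a Moriwaki-condition element to which the hypothesis directly applies. Concretely, $\widehat\Pic(K)_{\Int}$ is generated as a group by $\widehat\Pic(K)_{\Nef}$ and the intersection pairing of Section~\ref{intersectionssec} is multilinear, so it suffices to prove $\oM\cdot\oN_1\cdots\oN_d=0$ when each $\oN_i$ is nef. By continuity of this pairing and the description of $\widehat\Pic(K)_{\Nef}$ as limits of nef line bundles on single projective models, we may further assume every $\oN_i$ is realized on a common model $\cB$. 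Applying the polarization identity over $\Q$ to the sums $\sum_{i\in S}\oN_i$ (each again nef on $\cB$), the symmetric multilinear form $(\oN_1,\ldots,\oN_d)\mapsto\oM\cdot\oN_1\cdots\oN_d$ is determined by its diagonal values $\oN\mapsto\oM\cdot\oN^d$, so it suffices to show $\oM\cdot\oN^d=0$ for every nef $\oN$ on $\cB$.

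Fix such $\oN$. Fix a nef $\overline{A}$ on $\cB$ with $A_{k_1}^d>0$ for some intermediate subfield $k_1\subset K$ of transcendence degree one over $k$ (in base~$\Z$, $k_1=\Q$); condition~(3) of the Moriwaki definition is open and hence preserved by small perturbation. Fix also $\overline{E}\in\widehat\Pic(\cB)$ with $\overline{E}^2=0$: in base~$\Z$ take $\overline{E}=(\cO_\cB,e^{-1})$, a constant metric shift whose curvature vanishes; in base~$k$ take $\overline{E}=F$, an irreducible fiber of the morphism $\cB\to C_{k_1}$ onto the projective curve with function field $k_1$. For rational $\epsilon>0$ set
\[
\overline{H}_\epsilon:=\oN+\epsilon\overline{A}+c(\epsilon)\overline{E},
\]
choosing $c(\epsilon)\in\Q$ so that $\overline{H}_\epsilon^{d+1}=0$. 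Since $\overline{E}^2=0$ the left side is affine in $c$ with leading coefficient $(d+1)(\oN+\epsilon\overline{A})^d\cdot\overline{E}>0$, so $c(\epsilon)$ is uniquely determined and converges to $c_0:=-\oN^{d+1}/((d+1)\oN^d\cdot\overline{E})$ as $\epsilon\to 0$. Provided $\overline{H}_\epsilon$ is nef, it satisfies all three Moriwaki conditions, so by hypothesis $\oM\cdot\overline{H}_\epsilon^d=0$. Continuity of the pairing then gives $\oM\cdot(\oN+c_0\overline{E})^d=0$, and expanding via $\overline{E}^2=0$ yields
\[
\oM\cdot\oN^d+dc_0\,\oM\cdot\oN^{d-1}\cdot\overline{E}=0.
\]

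The main obstacle is the correction $\oM\cdot\oN^{d-1}\cdot\overline{E}$, which in general does not vanish term-by-term. The strategy is induction on $d$: in base~$k$, restricting to the fiber $F=\overline{E}$ reduces the intersection to one on the residue model $\cB|_F$ over a residue field of smaller transcendence degree, where the restriction of $\oM$ inherits the annihilating property against Moriwaki elements, and the inductive hypothesis forces the correction to vanish; in base~$\Z$ an analogous argument uses the metric-shift structure of $\overline{E}$. The base case $d=0$ reduces to $\widehat{\deg}\,\oM=0$, which is a special case of the hypothesis. A secondary technicality is maintaining nefness of $\overline{H}_\epsilon$ in base~$k$ when $c(\epsilon)<0$, ensured by taking $\overline{A}$ sufficiently ample relative to $F$ (and if necessary refining $\cB$ so that the vertical geometry of $F$ is dominated); in base~$\Z$ this issue does not arise since constant metric shifts do not affect nefness.
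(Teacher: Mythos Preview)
Your perturbation strategy is natural, but there are two genuine gaps.

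\textbf{Nefness of $\overline{H}_\epsilon$ fails.} Since $\oN+\epsilon\overline{A}$ is nef, $(\oN+\epsilon\overline{A})^{d+1}\ge0$, so $c(\epsilon)\le0$ and you are always \emph{subtracting} the nef class $\overline{E}$. In base~$\Z$ your claim that constant metric shifts do not affect nefness is incorrect: arithmetic nefness requires $\widehat\deg(\ocL|_C)\ge0$ for every horizontal one-cycle $C$, and scaling the metric by $e^{-c}$ with $c<0$ decreases these degrees. In base~$k$ the situation is no better. Take $d=1$ and $\cB=\mathbb{F}_n$ a Hirzebruch surface with $n\ge1$, negative section $\Sigma$, fiber $F$, and $\oN=\Sigma+(n+1)F$, which is ample. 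One computes $\oN^2=n+2$, $\oN\cdot F=1$, $\oN\cdot\Sigma=1$, hence $c_0=-(n+2)/2$ and $(\oN+c_0F)\cdot\Sigma=1-(n+2)/2=-n/2<0$. So $\oN+c_0F$ is not nef, and adding $\epsilon\overline{A}$ cannot repair this as $\epsilon\to0$ regardless of how ample $\overline{A}$ is. Blowing up $\cB$ does not change these intersection numbers, so ``refining $\cB$'' does not help either. Without nefness, $\overline{H}_\epsilon$ does not satisfy the Moriwaki condition and the hypothesis cannot be invoked.

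\textbf{The induction on $d$ is unsupported.} You assert that $\oM|_F$ ``inherits the annihilating property against Moriwaki elements,'' but give no mechanism for this. The fiber $F$ is a model for its own function field, which is not $K$; a Moriwaki element on $F$ has no evident lift to a Moriwaki element on $\cB$, so the hypothesis on $\oM$ says nothing about $\oM|_F$. In base~$\Z$, the correction $\oM\cdot\oN^{d-1}\cdot(\cO,e^{-1})$ is precisely the geometric intersection $M_{k_1}\cdot N_{k_1}^{d-1}$ on the generic fiber $\cB_{k_1}$; showing this vanishes amounts to proving $\oM_{k_1}$ is numerically trivial, which is already most of the content of the lemma.

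The paper takes a different route that avoids both obstacles. Rather than perturbing an arbitrary nef $\oN$, it constructs, for each closed point $x_0\in\cV(\overline{k_1})$, a \emph{specific} Moriwaki element $\phi^*\overline\cO(1)$ via Noether normalization $\phi:\cV\to\P^d_{\cW}$ sending $x_0$ to a coordinate point; this $\overline\cO(1)$ is manifestly nef, and the hypothesis yields $h_{\oM}(x_0)=0$. From the vanishing of $h_{\oM}$ on all closed points one deduces that $\oM_{k_1}$ is numerically trivial, and then Zhang's essential minimum in the $d=0$ case gives $\oM\cdot\ocH^d=0$ for $\ocH$ with ample generic fiber; polarization finishes. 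The key difference is that the paper never needs an arbitrary nef class to be close to a Moriwaki class.
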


\begin{proof}
We follow the proof set out in the base $\Z$ setting in~\cite{yz2}. Fix an open model $\cV$ for $K$, with $\oM\in\widehat\Pic(\cV)_{\Int}$. 
Let $\cW$ be an open model for $k_1$, with $\cW=\Spec\Z$ in base $\Z$. After possibly replacing $\cV$ with an open subscheme, we can assume we have a flat morphism $\cV\to\cW$, and that every closed point $x\in\cV(\overline k_1)$ corresponds to a one dimensional horizontal (over $\cW$) closed integral subscheme $\overline x$ of $\cV$. 

Fix such a closed point $x_0\in\cV(\overline k_1)$. By Noether noralization, there exists a finite morphism $\phi:\cV\to\cV_0$ to an open subset $\cV_0\subset\P^d_{\cW}$, and we may assume that the image of $x_0$ is the point $P_0=(0,\dots,0,1)\in\P^d_{k_1}$. Call $\mathcal P_0$ its closure in $\P^d_{\cW}$.

Define $\overline\cO(1)$ to be the admissible lift of $\cO(1)\in\Pic(\P^d_{k_1})$ under the square map. This corresponds to $\cO(1)$ on $\P^d_{\cW}$, with an admissible Hermitian metric in base $\Z$. Then the Moriwaki condition is satisfied, as can be seen by evaluating intersections via coordinate hyperplane sections, and then by hypothesis, $\oM\cdot\phi^*\overline\cO(1)=0$. Using the same sections, $\overline\cO(1)^d$ can be represented by the 1-cycle $\mathcal P_0$ in base $k$, and by $(\mathcal P_0,\mathfrak g_0)$ for some semi-positive $\mathfrak g_0$ on $\P^d(\C)$ in base $\Z$. We can now compute:
\[
0=\oM\cdot\phi^*\overline\cO(1)^d\ge\oM\cdot\phi^*\mathcal P_0\ge \oM|_x=h_{\oM}(x)\ge0,
\]
where the inequalities follow from the positivity of $\mathfrak g_0$ and from approximating $\oM$ by nef models on projective arithmetic models of $\cV$. Thus we've proven that the height function defined by $\oM$ is trivial on $\cV(\overline k_1)$.

Next, fix a projective model $\cB$ for $\cV$, which we may assume is flat and projective over $\Spec \Z$ or the unique projective model for $k_1$, and let $\ocM_m$ be (Hermitian, in base $\Z$) line bundles on projective models $\cB_m$ for $\cV$, which converge to $\oM$. We may assume that each $\cB_m$ dominates $\cB$, and then convergence is defined to mean that for some strictly effective arithmetic divisor $\ocD$ with support $\cB\backslash\cV$, we have 
\[
-\epsilon_m\cO(\ocD)<\ocM_m-\oM<\epsilon_m\cO(\ocD)
\]
for a sequence of positive real numbers $\{\epsilon_m\}$ converging to zero. Let $\widehat\Pic(\cV_{k_1}/k_1)$ mean the completion of the projective limit of groups $\Pic(\cB_{m,k_1})$ as $\cB_m$ varies, so that we can think of $\oM_{k_1}\in\widehat\Pic(\cV_{k_1}/k_1)$ as the generic fiber of $\oM$. If $\oM$ came from a single model $\ocM$ on $\cB$, the following would be a straightforward calculation applying the essential minimum for $d=0$ on $\widehat\Pic(\cB_{k_1})$, but for general $\oM$ we must take care to make sure this extends to the limit of such models.

Since $\oM$ defines a trivial height function and $\epsilon_m\cO(\ocD)-\ocM_m+\oM$ is effective, the height function defined by $\epsilon_m\cO(\ocD)-\ocM_m$ is non-negative on $\cV(\overline k_1)$. Let $A_1,\dots,A_{d-1}\in\Pic(\cB_{k_1})$ be very ample. By Seidenberg's Bertini's theorem, we can find sections which cut out a curve which intersects $\cV_{k_1}\subset\cB_{k_1}$ nontrivially. Since $\cV\subset\cB$ is open, the height given by $\epsilon_m\cO(\ocD)-\ocM_m$ is bounded below on this curve, hence the generic fiber $\epsilon_{m}\cO(\ocD)_{k_1}-\ocM_{m,k_1}$ is nef, and
\[
\left(\epsilon_{m}\cO(\ocD)_{k_1}-\ocM_{m,k_1}\right)\cdot A_1\cdots A_{d-1}\ge0.
\]
Taking linear combinations of the $A_i$s and letting $m\to\infty$, we see that since also each $\ocM_{m,k_1}$ is nef, $\oM_{k_1}
$ must be numerically trivial.

Now let $\ocH$ be any (Hermitian) line bundle on $\cB$ such that $\ocH_{k_1}$ is ample. By definition of $\lambda_1$, we can find a dense sequence of points $\{x_j\}\subset\cV(\overline{k_1})$ such that
\[
\lim_{j\to\infty}h_{\ocH}(x_j)=\lambda_1(\cV_{k_1},\ocH),
\]
the essential minimum of $\ocH$ on $\cV_{k_1}$. Since $\ocH_{k_1}$ is ample, there exists a positive real constant $C$ such that 
\[
h_{\cO(\ocD)}(x_j)\le C h_{\ocH}(x_j)
\]
for all $j$, and since the right hand side is convergent and in particular bounded as $j\to\infty$, this means $h_{\ocM_m}(x_j)$ converges uniformly to $h_{\ocM}(x_j)$. Then
\[
\limsup_{m\to\infty}\lambda_1\left(\cV_{m,k_1},\ocH+\ocM_m\right)\le\lambda_1\left(\cV_{k_1},\ocH\right),
\]
and since $\ocM_m$ is nef, this must be an equality.

Now apply Zhang's Theorem on essential minima,~\cite[Theorem 1.11]{Z95} when $k_1=\Q$ and~\cite[Theorem 4.1]{G07} when $k_1$ is a transcendence degree one function field over $k$, and recall that $\oM_{k_1}$ is numerically trivial to get
\[
\lambda_1\left(\cV_{k_1},\ocH\right)=\lim_{m\to\infty}\lambda_1\left(\cV_{m,k_1},\ocH+\ocM_m\right)
\ge \lim_{m\to\infty}h_{\ocH+\ocM_m}\left(\cV_{k_1}\right)
=\frac{\left(\ocH+\oM\right)^{d+1}}{(d+1)\left(\cH_{k_1}\right)^d}.
\] 
Since all of the above hypotheses hold for any positive multiple of $\oM$ and the left-most quantity doesn't depend on $\oM$, we also get that $(\ocH+a\oM)^{d+1}$ is bounded for all $a>0$, and thus we must have $\oM\cdot\ocH^d=0$. 

Finally, let $\ocH_1,\dots,\ocH_d$ be any line bundles with ample generic fibers $\ocH_{i,k_1}$. Then
\[
\oM\cdot\left(t_1\ocH_1+\cdots+t_d\ocH_d\right)^d=0
\]
for all positive real constants $t_1,\dots,t_d$. Thus
\[
\oM\cdot\ocH_1\cdots\ocH_d=0.
\]
taking linear combinations and varying $\cB$, we get $\oM\equiv0$ as desired.

\end{proof}




\section{The Hodge Index Theorem}\label{hodgeindexsection}

This section proves Theorem~\ref{hodgeindex}, beginning with the inequality, then proving the equality, first for vertical line bundles, then in the case where $X$ is a curve, and finally in general.

\subsection{Inequality}
Let $\pi:\cX\to\cB$ be a projective arithmetic model for $X/K$, let $\ocM\in\widehat\Pic(\cX)$, and let $\ocL_2,\dots,\ocL_n\in\widehat\Pic(\cX)$ be nef, with big generic fibers $\ocL_{2,K},\dots,\ocL_{n,K}\in\Pic(X)$. Let $\ocH_1,\dots,\ocH_d\in\widehat\Pic(\cB)$ be nef. Suppose
\[
\ocM_K\cdot\ocL_{2,K}\cdots\ocL_{n,K}=0.
\]
Since every $\oM,\oL_i$ can be approximated by models as above, it suffices to prove that 
\[
\ocM^2\cdot\ocL_2\cdots\ocL_n\cdot\pi^*\ocH_1\cdots\pi^*\ocH_d\le0.
\]

Let $k_1$ be $\Q$ in base $\Z$, or a subfield of $K$ with transcendence degree one over $k$ in base $k$, as in previous sections.

View $\cX$ as a model for $\cX_{k_1}$. Then
\[
\ocM_{k_1}\cdot\ocL_{2,k_1}\cdots\ocL_{n,k_1}\cdot\left(\pi^*\ocH_1\right)_{k_1}\cdots\left(\pi^*\ocH_d\right)_{k_1}=\left(M\cdot L_2\cdots L_n\right)\left(\cH_{1,k_1}\cdots\cH_{d,k_1}\right)=0.
\]
 The inequality then follows immediately from the arithmetic Hodge Index Theorem over $k_1$ in~\cite{carney,yz}.
 
As a corollary, we have the following Cauchy-Schwarz inequality:
\begin{cor}\label{cauchyschwarz}
Suppose $\oM,\oN\in\widehat\Pic(X)_{\Int}$, and $\oL_2,\dots,\oL_n\in\widehat\Pic(X)_{\Nef}$. Let $\oH_1,\dots,\oH_d\in\widehat\Pic(K)_{\Nef}$. Then
\begin{multline*}
\left(\oM\cdot\oN\cdot\oL_2\cdots\oL_n\cdot\pi^*\oH_1\cdots\pi^*\oH_d\right)^2
\\\le\left(\oM^2\cdot\oL_2\cdots\oL_n\cdot\pi^*\oH_1\cdots\pi^*\oH_d\right)\left(\oN^2\cdot\oL_2\cdots\oL_n\cdot\pi^*\oH_1\cdots\pi^*\oH_d\right).
\end{multline*}
\end{cor}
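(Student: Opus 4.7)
The proof is the classical discriminant trick applied to the inequality portion of Theorem~\ref{hodgeindex}, which was just established. The plan is as follows.

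For each $t \in \Q$, form the integrable adelic line bundle $\oM + t\oN \in \widehat\Pic(X)_{\Int}$. In the natural setting in which this corollary is invoked---namely, when both $M \cdot L_2 \cdots L_n = 0$ and $N \cdot L_2 \cdots L_n = 0$ hold on the generic fiber $X$ (the orthogonality hypothesis inherited from the Hodge Index inequality just proved)---the perturbed element also satisfies $(M + tN) \cdot L_2 \cdots L_n = 0$. Hence Theorem~\ref{hodgeindex}(1) applies to give the vector-valued inequality
\[
(\oM + t\oN)^2 \cdot \oL_2 \cdots \oL_n \;\le\; 0 \quad \text{in } \widehat\Pic(K)_{\Int}.
\]

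Next I would pair this pseudo-effective element with the nef product $\pi^*\oH_1 \cdots \pi^*\oH_d \in \widehat\Pic(K)_{\Nef}$. By Definition~\ref{positivitydefs}(1), the resulting real number is $\le 0$, so setting
\[
A := \oM^2 \cdot \oL_2 \cdots \oL_n \cdot \pi^*\oH_1 \cdots \pi^*\oH_d, \quad B := \oM \cdot \oN \cdot \oL_2 \cdots \oL_n \cdot \pi^*\oH_1 \cdots \pi^*\oH_d, \quad C := \oN^2 \cdot \oL_2 \cdots \oL_n \cdot \pi^*\oH_1 \cdots \pi^*\oH_d,
\]
multilinearity of the intersection product gives $Ct^2 + 2Bt + A \le 0$ for every $t \in \Q$, and by continuity for every $t \in \R$. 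A real quadratic that is non-positive for every real $t$ must have non-positive leading coefficient and non-positive discriminant, so $(2B)^2 - 4AC \le 0$, which is the desired inequality $B^2 \le AC$.

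The main point to watch will be the orthogonality hypothesis: the corollary is stated without requiring $M \cdot L_2 \cdots L_n = 0$ on $X$, yet Theorem~\ref{hodgeindex}(1) does require it. To handle full generality one would orthogonalize before running the above argument, replacing $\oM$ by $\oM - c_M\oL_2$ and $\oN$ by $\oN - c_N\oL_2$ with $c_M, c_N \in \Q$ chosen so that the adjusted generic intersections with $L_2 \cdots L_n$ vanish (which is possible once one has the positivity $L_2^n \cdot L_3 \cdots L_n > 0$, e.g.\ from bigness of $\oL_2$), then expanding and reassembling the resulting quadratic to recover $B^2 \le AC$ for the original $\oM$ and $\oN$. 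This bookkeeping is the main technical obstacle, but no new ideas beyond the Hodge Index inequality and multilinearity are needed.
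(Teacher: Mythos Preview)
Your proposal is correct and is exactly the approach the paper takes: the paper's entire proof is the single sentence ``Using the previously established inequality, the proof is the same as that for the usual Cauchy-Schwarz inequality,'' i.e.\ the discriminant trick you describe. You are in fact more careful than the paper in flagging the generic-fiber orthogonality hypothesis $M\cdot L_2\cdots L_n=0$ needed to invoke Theorem~\ref{hodgeindex}(1); the paper leaves this implicit, and in the one place the corollary is used (Section~\ref{hodgeindexsection}, higher dimension) the relevant classes are vertical or flat, so the hypothesis is automatic.
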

Using the previously established inequality, the proof is the same as that for the usual Cauchy-Schwarz inequality.

\subsection{Equality for vertical adelic line bundles}\label{verticalcase}
We now prove the equality part of Theorem~\ref{hodgeindex} under the assumption that $\oM\in\widehat\Pic(X)_{\Vrt}$. The theorem additionally imposes the hypotheses that $\oL_i\gg0$, that $\oM$ is $\oL_i$-bounded for every $2\le i\le n$, and that
\[
\oM^2\cdot\oL_2\cdots\oL_n\equiv0.
\]
This proof comes largely from~\cite{yz2}, adapted here to include the base $k$ case. We can fix an open arithmetic model $\cU\to\cV$ of $X/K$ such that all $\oM,\oL_2,\dots,\oL_n\in\widehat\Pic(\cU)_{\Int}$. Recall that $\cV$ has dimension $d+1$. 

\begin{lemma}
Let $\cW$ be a proper closed subscheme of $\cV$ of dimension $e+1$, with the additional assumption when we are in the base $\Z$ case that $\cW$ is flat over $\Z$. This induces a projective and flat morphism $\cU_{\cW}\to\cW$. Then for any $\oH_1,\dots,\oH_e\in\widehat\Pic(\cV)_{\Int}$,
\[
\left(\oM|_{\cU_{\cW}}\right)^2\cdot\left(\oL_2|_{\cU_{\cW}}\right)\cdots\left(\oL_n|_{\cU_{\cW}}\right)\cdot\pi^*\left(\oH_1|_{\cW}\right)\cdots\pi^*\left(\oH_e|_{\cW}\right)=0.
\]
\end{lemma}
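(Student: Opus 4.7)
\textbf{Plan}: The plan is to transport the intersection on $\cU_\cW$ to a top-dimensional pairing on a projective model of $\cV$ via Deligne's projection formula, and then to conclude by induction on the codimension of the closure of $\cW$ together with the numerical triviality of $N := \oM^2 \cdot \oL_2 \cdots \oL_n$. First, I would apply the projection formula for the relative Deligne pairing along the projective flat morphism $\pi : \cU_\cW \to \cW$, combined with its base-change compatibility along $\cW \hookrightarrow \cV$, to rewrite the left-hand side as
\[
\deg_\cW\!\left(N|_\cW \cdot \oH_1|_\cW \cdots \oH_e|_\cW\right),
\]
where $N \in \widehat\Pic(\cV)_{\Int}$ denotes the relative Deligne intersection $\langle\oM,\oM,\oL_2,\dots,\oL_n\rangle(\cU/\cV)$. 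The standing assumption of this subsection gives $N \equiv 0$ in $\widehat\Pic(K)_{\Int}$, meaning that $N \cdot \overline{A}_1 \cdots \overline{A}_d = 0$ for every integrable $\overline{A}_i$ pulled back from a projective model of $\cV$.

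Next, I would extend $\cV$ to a projective model $\cB$, let $\overline{\cW}$ be the closure of $\cW$ in $\cB$, and, by approximation and continuity of the intersection pairing, reduce to the case where $N$ and each $\oH_j$ are genuine line bundles on $\cB$. The projection formula for the closed immersion $\overline{\cW} \hookrightarrow \cB$ then converts the displayed quantity into $\deg_\cB(N \cdot \oH_1 \cdots \oH_e \cdot [\overline{\cW}])$, with $[\overline{\cW}] \in CH^{d-e}(\cB)$ the cycle class. I would then argue by induction on the codimension $r := d - e$. In the base case $r = 1$, after blowing up $\cB$ so that $\overline{\cW}$ is Cartier, $[\overline{\cW}] = c_1(\cO(\overline{\cW}))$ is represented by an integrable line bundle, and the quantity becomes a top-dimensional pairing on $\cB$ involving $N$, which vanishes by $N \equiv 0$. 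For $r > 1$, pick a very ample $\overline{A}$ on $\cB$ together with a section $s \in H^0(\cB, \overline{A})$ such that $\overline{\cW} \subset \div(s)$; then $\overline{\cW}$ sits in $\div(s)$ at codimension $r-1$, and the restriction $N|_{\div(s)}$ still kills every top pairing on $\div(s)$ built from restrictions of integrables on $\cB$, because by projection formula any such pairing equals $N \cdot (\cdots) \cdot \overline{A}$ on $\cB$. Applying the inductive hypothesis to the pair $(\div(s), \overline{\cW})$ then yields the desired vanishing.

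The main technical difficulty lies in the inductive step: one must carefully track the integrability of line bundles along repeated restrictions to auxiliary divisors while preserving the appropriate numerical-triviality property of $N$. Ancillary issues include arranging blowups of $\cB$ so that $\overline{\cW}$ becomes Cartier at the base of the induction, and choosing the very ample $\overline{A}$ at each inductive step with a section vanishing on $\overline{\cW}$ (the latter handled by standard global generation of $\overline{A} \otimes \cI_{\overline{\cW}}$ after sufficient twisting). A secondary subtlety is ensuring that the approximation by model line bundles on $\cB$ commutes with the base change to $\overline{\cW}$; this follows from the continuity properties of the Deligne pairing established in Section~\ref{intersectionssec}.
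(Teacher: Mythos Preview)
Your overall strategy---reducing to the numerical triviality of $N = \oM^2\cdot\oL_2\cdots\oL_n \in \widehat\Pic(K)_{\Int}$ and then inducting on the codimension of $\cW$---matches the paper's: it too reduces to the codimension-one case and asserts the rest follows by induction. The substantive difference, and the gap in your argument, lies in how the codimension-one step is carried out.

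Your base case asserts that one can ``blow up $\cB$ so that $\overline\cW$ is Cartier.'' This is not justified. A codimension-one integral closed subscheme of a normal projective variety is a Weil divisor, but need not be Cartier or even $\Q$-Cartier unless the model is $\Q$-factorial; an arbitrary blowup does not produce this. In characteristic zero one can pass to a regular model by resolution of singularities, but in positive characteristic resolution is unavailable, and de Jong's alterations only furnish a regular variety after a generically finite, typically non-birational cover---which is then no longer a projective model of $\cV$. The same problem recurs in your inductive step: on the hypersurface section $\div(s)$, the class $\cO(\overline\cW)$ you ultimately need is not a restriction from $\cB$, so the hypothesis ``$N|_{\div(s)}$ kills restrictions of integrables from $\cB$'' is too weak to close the induction.

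The paper circumvents this with a different idea. After reducing by approximation and linearity to $\cH_d$ very ample on a fixed model $\cB$, it chooses a section $s$ of $\cH_d$ vanishing on the closure $\cC$ of $\cW$ and writes $\div(s)=\sum_i a_i\cC_i$ with $\cC_0=\cC$. Computing $0 = N\cdot\oH_1\cdots\oH_d$ via this section gives a sum over the $\cC_i$, plus in base $\Z$ an Archimedean integral which is itself an intersection of $N$ with a vertical class in $\widehat\Pic(\cB)$ and hence vanishes by hypothesis. The crucial point is that each summand $(N\cdot\oH_1\cdots\oH_{d-1})|_{\cC_i}$ is $\le 0$ by the already-established \emph{inequality} part of Theorem~\ref{hodgeindex}; since they sum to zero, each vanishes, in particular the term for $\cC_0=\cC$. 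This sign argument is exactly what replaces your unjustified Cartier assumption. You also do not address the Archimedean contribution in base $\Z$, which requires its own separate treatment.
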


\begin{proof}
Assume $e=d-1$, so that $\cW\subset\cV$ has codimension $1$. The general case follows from this by induction. Our original hypothesis is that 
\[
\oM^2\cdot\oL_2\cdots\oL_n\cdot\pi^*\oH_1\cdots\pi^*\oH_d=0.
\]
for every choice of $\oH_i\in\widehat\Pic(K)_{\Int}$.
By approximation, it will suffice to prove this fixing a projective model $\cB$ for $\cV$ and assuming that every $\oH_j$ corresponds to some $\ocH_j\in\widehat\Pic(\cB)_{\Int}.$ Taking linear combinations, it suffices to further assume that $\cH_j$ is very ample. 

Let $\cC$ be the closure of $\cW$ in $\cB$. We can find a global section $s$ of $\cH_d$ which vanishes on $\cC$, and can write
\[
\div(s)=\sum_{i=0}^ra_i\cC_i
\]
where, $\cC_0=\cC$, each $\cC_i$ is an irreducible closed curve, and $a_0>0$. Now compute the intersection using this section:
\begin{multline}
0=\sum_{i=0}^ra_i\oM^2\cdot\oL_2\cdots\oL_n\cdot\pi^*\oH_1\cdots\pi^*\oH_{d-1}\cdot\pi^*\cC_i
\\-\int_{\cB(\C)}\log||s||c_1(\oM)\cdots c_1\left(\pi^*\oH_{d-1}\right).
\end{multline}
The integral on the right is not present in the base $k$ case. In base $\Z$, it simply characterizes the contribution of the Hermitian metric on $\ocH_d$. Since this is merely the intersection with a (vertical, with respect to the base $\Z$) class in $\widehat\Pic(\cB)$, it is zero by hypothesis. 

For the remaining terms,
\[
\oM^2\cdot\oL_2\cdots\oL_n\cdot\pi^*\oH_1\cdots\pi^*\oH_{d-1}\cdot\pi^*\cC_i\le0
\]
by the inequality of Theorem~\ref{hodgeindex}, and thus all must be identically zero. Since $\cC_0=\cC$ is the closure of $\cW$, this proves the lemma.

\end{proof}

We now apply the analytic theory developed in Section~\ref{analytic}. Fix $k_1$ as usual.

Let $w$ be any closed point of $\cV_{k_1}$. Then $w$ corresponds to a (flat over $\Z$, in the base $\Z$ case) dimension $1$ closed subscheme $\cW\subset\cV$. Further, $w$ determines a finite collection of closed points ($k_1$ need not be algebraically closed) of $\cV_{\nu}^{\an}$ for each place $\nu$ of $k_1$; denote this set by $w_{\nu}$. Applying the previous lemma, we have
\[
\left(\oM|_{\cU_{\cW}}\right)^2\cdot\left(\oL_2|_{\cU_{\cW}}\right)\cdots\left(\oL_n|_{\cU_{\cW}}\right)=0,
\]
and since $k_1$ is either $\Q$ or a transcendence degree one function field, we can apply the Hodge Index Theorem of~\cite{yz} and~\cite{carney} to conclude that
\[
\oM|_{\cU_{\cW}}\in\pi^*\widehat\Pic(\cW)_{\Int}
\]
whenever $X$ has good reduction at the function field of $\cW$, which holds for a dense set of points $w$. Note that in the base $k$ case, the result of~\cite{carney} additionally allows a trace component, but since $\cM$ is vertical, this is zero. 

Consider the function $\log||1||_{\oM}$ on $\cU^{\an}$. This function is constant on the fibers of $\cU^{\an}\to\cV^{\an}$ above $w_{\nu}$. Since the collection of finite sets $w_{\nu}$ over all closed points $w$ is dense in $\cV_{\nu}^{\an}$, and since $\log||1||_{\oM}$ is continuous, it is constant on all fibers of $\cU^{\an}\to\cV^{\an}$. 

Thus, $\oM\in\pi^*\widehat\Pic((\Spec K)^{\an})_{\Cont}$. To finish the proof, note that since $\oM\in\widehat\Pic(X)_{\Int}$, taking any point $x\in X(\overline K)$ we have
\[
h_{\oM}(x)\in\widehat\Pic(K)_{\Int}\quad\text{and}\quad\oM=\pi^*h_{\oM}(x).
\]




\subsection{Curves}

Next we prove Theorem~\ref{hodgeindex} for curves. Suppose $X$ has dimension $1$. In this case, our hypothesis is that $\deg M=0$ and $\oM^2\equiv0$, and our goal is to show that 
\[
\oM\in\pi^*\widehat\Pic(K)_{\Int}+\widehat\Tr_{K/k}\Pic^0(X).
\]
The idea is similar to that of Faltings~\cite{faltings} and Hriljac~\cite{hriljac}, where $\oM^2$ is related to the N\'eron-Tate height. Recall that $\widehat\Tr_{K/k}\Pic^0(X)=0$ by definition in base $\Z$. Using Lemma~\ref{flatexistence}, we can find a flat adelic line bundle $\oM_0$ such that $M_0=M$, and split $\oM$ into
\[
\oM=\oM_0+\oN,
\]
where $\oN$ is vertical. Then
\[
0\equiv\oM^2=\oM_0^2+\oN^2.
\]
Both pieces on the right are $\le0$ by the inequality part of Theorem~\ref{hodgeindex}, thus both must be $\equiv0$, and then $\oN\in\pi^*\widehat\Pic(K)_{\Int}$ by the vertical case. Thus we have reduced to proving the result for flat line bundles.

Let $\Jac_X$ be the Jacobian of $X$. After possibly extending $K$, fix a point $x_0\in X(K)$, so that we can fix a Jacobian morphism $j:X\to \Jac_X$ taking $x_0\mapsto0$. Recall from Section~\ref{flatsection} the Poincar\'e bundle $P$ on $\Jac_X\times\Jac_X$. As before, we will also write $P$ to mean it's pullback to $X\times\Jac_X$. Then $P$ can be extended to a metric $\oP$ such that $[2]^*\oP=4\oP$ on $\Jac_X\times \Jac_X$ and $(\Id\times[2])^*\oP=2\oP$ on $X\times\Jac_X$.

$M$ corresponds to a point $\alpha\in\Jac_X(K)$ and ${\oM_0=\oP|_{X\times\{\alpha\}}}$. By linearity, and possibly extending $K$, it suffices to consider the case where $\alpha=x-x_0$ for some $x\in X(K)$, so that $\alpha=j(x)$. Let $\overline x-\overline x_0$ be any lifts of the corresponding line bundles to $\widehat\Pic(X)_{\Int}$. Then
\[
\oM_0^2=\left(\oP|_{X\times\{\alpha\}}\right)^2=\overline x\cdot\oP|_{X\times\{\alpha\}}-\overline x_0\cdot\oP|_{X\times\{\alpha\}}=\pi_*\left(\oP|_{\{x\}\times\{\alpha\}}\right)-\pi_*\left(\oP|_{\{x_0\}\times\{\alpha\}}\right).
\]
The last equality holds because $\oP$ is flat, and thus the choice of metrics on $\overline x$ and $\overline x_0$ doesn't matter. 
Now, since $\overline P|_{\{x_0\}\times \Jac_X}$ is the trivial bundle, the above is just 
\[
\oM_0^2=\pi_*\left(\oP|_{\{x\}\times\{\alpha\}}\right)=h_{\oP}(x\times\alpha)=h_{\oP}(\alpha\times\alpha),
\]
Where $h_{\oP}$ is the height first on $X\times\Jac_X$, and then on $\Jac_X\times\Jac_X$.

The Poincar\'e bundle $P$ can be expressed in terms of $\Theta$ 
on $\Jac_X$, giving
\[
2\oP=\pr_1^*\overline\Theta+\pr^*\overline\Theta-m^*\overline\Theta,
\]
where $\pr_1,\pr_2,m:\Jac_X\times\Jac_X\to\Jac_X$ are the first and second projection, and addition maps, respectively~\cite{mumfordav}. We compute
\[
2h_{\oP}(\alpha\times\alpha)=h_{\overline\Theta}(\alpha)+h_{\overline\Theta}(\alpha)-h_{\overline\Theta}(2\alpha)=-2h_{\overline\Theta}(\alpha).
\]
Thus, we've proven
\[
\oM^2_0=-h_{\overline\Theta}(\alpha)=-2h_{\NT}(\alpha).
\]
We may assume $\alpha$ is not torsion, as then a multiple of $\oM$ would be vertical. Then the result follows immediately from the non-degeneracy statements for the N\'eron-Tate height, Corollaries~\ref{heightprepcorZ} and~\ref{heightprepcork}.




\subsection{Higher dimension}
Now suppose $X$ has dimension greater than one, that $M\cdot L_2\cdots L_n=0$, that $\oL_i\gg0$ for each $i$, and that
\[
\oM^2\cdot\oL_2\cdots\oL_n\equiv0.
\]
\begin{lemma}
Under the above hypoptheses, $M$ is numerically trivial.
\end{lemma}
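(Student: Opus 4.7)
The plan is to reduce the adelic equality $\overline M^2\cdot\oL_2\cdots\oL_n\equiv 0$ in $\widehat\Pic(K)_{\Int}$ to the scalar equality $M^2\cdot L_2\cdots L_{n-1}=0$ on $X$ (a well-defined integer, since this is the top intersection on an $n$-dimensional variety), at which point the classical Hodge Index Theorem on $X$ will immediately yield numerical triviality of $M$.

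To extract this scalar equality, I will exploit the arithmetic positivity $\oL_n\gg 0$. Fix an intermediate subfield $k_1$ over which $K$ has transcendence degree $d$; by definition of $\gg 0$, there exists $\oN\in\widehat\Pic(k_1)_{\Int}$ with $\widehat\deg(\oN)>0$ such that $\oL_n':=\oL_n-\pi^*\oN$ is nef on $X$, where $\pi^*$ denotes pullback from $\Spec k_1$ to $X$. Since $\pi^*\oN$ has trivial underlying line bundle, $L_n'=L_n$ as classical bundles, so $M\cdot L_2\cdots L_{n-1}\cdot L_n'=M\cdot L_2\cdots L_n=0$ on $X$. Next, choose $\oH\in\widehat\Pic(K)_{\Nef}$ whose underlying line bundle $H$ is ample on some projective model $\cB$ of $K$, so that the projection formula for $\rho\colon\Spec K\to\Spec k_1$ gives $\rho^*\oN\cdot\oH^d=H_{k_1}^d\cdot\widehat\deg(\oN)>0$.

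Intersecting the hypothesis with $\oH^d$ produces $\overline M^2\cdot\oL_2\cdots\oL_n\cdot\oH^d=0$ as a real number. Substituting $\oL_n=\pi^*\oN+\oL_n'$, expanding, and applying the projection formula for the Deligne pairing on $X\to\Spec K$ to absorb the pulled-back factor, I obtain
\[
0=(M^2\cdot L_2\cdots L_{n-1})\cdot(\rho^*\oN\cdot\oH^d)+\overline M^2\cdot\oL_2\cdots\oL_{n-1}\cdot\oL_n'\cdot\oH^d.
\]
The second summand is non-positive by the already-proven inequality part of Theorem~\ref{hodgeindex}, since $M\cdot L_2\cdots L_{n-1}\cdot L_n'=0$ on $X$ and each of $\oL_2,\ldots,\oL_{n-1},\oL_n',\oH$ is nef. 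Combined with $\rho^*\oN\cdot\oH^d>0$, this forces $M^2\cdot L_2\cdots L_{n-1}\geq 0$.

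Finally, the classical Hodge Index Theorem on $X$, applied with the ample line bundles $L_2,\ldots,L_n$, gives the reverse inequality $M^2\cdot L_2\cdots L_{n-1}\leq 0$, so $M^2\cdot L_2\cdots L_{n-1}=0$. The equality case of classical Hodge Index then forces $M$ to be numerically trivial on $X$. The main technical point will be the careful bookkeeping for the projection formula for the Deligne pairing in the relative setting, and verifying that the decomposition $\oL_n=\pi^*\oN+\oL_n'$ preserves the nef-ness hypotheses needed to invoke the adelic inequality.
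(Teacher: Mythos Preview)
Your proof is correct and follows essentially the same route as the paper's: decompose $\oL_n=\oL_n'+\pi^*\oN$ with $\oL_n'$ nef and $\widehat\deg(\oN)>0$, expand the product, and observe that one summand is $\le0$ by the adelic inequality while the other is controlled by the classical Hodge Index Theorem on $X$. The only cosmetic difference is that the paper works directly with the partial order $\le0$ on $\widehat\Pic(K)_{\Int}$ (both summands are $\le0$, their sum is $\equiv0$, hence each is $\equiv0$), whereas you pair against a specific $\oH^d$ to extract a scalar equation and then argue $M^2\cdot L_2\cdots L_{n-1}\ge0$ and $\le0$ separately; since the order on $\widehat\Pic(K)_{\Int}$ is defined exactly by such pairings, the two arguments are equivalent.
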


\begin{proof}
There exists a nef adelic line bundle $\oL'$ such that $\oL_n=\oL'+\pi^*\oN$ for some $\oN\in\widehat\Pic(k_1)_{\Int}$ with $\widehat\deg(\oN)>0$. Then
\[
0\equiv\oM^2\cdot\oL_2\cdots\oL_n=\oM^2\cdot\oL_2\cdots\oL_{n-1}\cdot\oL'+(M^2\cdot L_2\cdots L_{n-1})\cdot\pi^*\oN .
\]
The first term is $\le0$ by the inequality of Theorem~\ref{hodgeindex}, and the second is $\le0$ by the inequality of the classical Hodge Index Theorem, hence both
\[
\oM^2\cdot\oL_2\cdots\oL_{n-1}\cdot\oL'\equiv0\quad\text{and}\quad M^2\cdot L_2\cdots L_{n-1}=0.
\] 
By the equality part of the classical Hodge Index Theorem, $M$ must be numerically trivial.
\end{proof}

Since $M$ is numerically trivial, it has a flat adelic extension $\oM_0$, such that $M_0=M$. As before, write $\oM=\oM_0+\oN$ where $\oN$ is vertical. Then
by the inequality,
\[
0\equiv\oM_0^2\cdot\oL_2\cdots\oL_n\equiv\oN^2\cdot\oL_2\cdots\oL_n.
\]
We've already shown that we must have $\oN\in\pi^*\widehat\Pic(K)_{\Int}$, so it remains to prove the result for $\oM_0$ flat.

We can now prove the full result by induction on the dimension of $X$. Replacing $\oL_n$ by a positive tensor power if necessary, by Seidenberg's Bertini Theorem~\cite[Theorem 7']{Seidenberg}, $L_n$ has a section which cuts out an irreducible, normal subvariety $Y\subset X$. Then
\[
0\equiv\oM_0^2\cdot\oL_2\cdots\oL_n\equiv\left(\oM_0|_Y\right)^2\cdot\oL_2|_Y\cdots\oL_{n-1}|_Y,
\]
as the two intersections differ by $\oM_0^2\cdot\oL_2\cdots\oL_{n-1}$ intersected with a vertical class, and $\oM_0$ is flat. Thus we assume by induction that 
\[
\oM_0|_Y\in\pi^*\widehat\Pic(K)_{\Int}+\widehat\Tr_{K/k}(\Pic^0(Y)).
\]
Abusing notation, we write $\pi$ to mean both structure morphisms $X\to\Spec K$ and $Y\to \Spec K$.

We proceed as in~\cite{carney}. Note that all statements about the trace below are trivially true when it is defined to be zero in the base $\Z$ setting. Write $\oM_0|_Y=\pi^*\oN+\oM_Y$, where $\oN\in\widehat\Pic(K)_{\Int}$ and $\oM_Y\in\widehat\Tr_{K/k}(\Pic^0(Y))$, and then define 
\[
\oM_X:=\oM_0-\pi^*\oN,
\]
So that $\oM_X\big|_Y=\oM_Y$.
By Lemma 3.6 of~\cite{carney}, the image of $\widehat\Tr_{K/k}\Pic^0(X)$ in $\widehat\Pic(Y)_{\Int}$ is equal to $\widehat\Tr_{K/k}(\Pic^0(Y))$ intersected with the image of $\widehat\Pic(X)_{\Int}$ via pullback of $Y\hookrightarrow X$, thus we may lift $\oM_Y$ to $\oM_X'\in\widehat\Tr_{K/k}\Pic^0(X),$ and then
\[
\oM_X-\oM_X'\in\ker\left(\widehat\Pic(X)_{\Int}\to\widehat\Pic(Y)_{\Int}\right).
\]
By the Lefshetz Hyperplane Theorem~\cite[Remark 9.5.8]{Kleiman},
$\Pic^0(X)_{\Q}\to\Pic^0(Y)_{\Q}$ is an injection, thus after replacing $\oM_0$ with a positive tensor power, we may assume $M_X-M_X'$ is trivial, so that $\oM_X-\oM_X'$ is vertical. Then by expanding and applying the Cauchy-Schwarz inequality, Lemma~\ref{cauchyschwarz}, to the mixed terms, we see that
\[
\left(\oM_X-\oM_X'\right)^2\cdot\oL_2\cdots\oL_n=\left(\oM-\pi^*\oN-\oM_X'\right)^2\cdot\oL_2\cdots\oL_n\equiv0.
\]
Thus, by the vertical case, $\oM_X-\oM_X'\in\widehat\Pic(K)_{\Int}$, and then
\[
\oM_0=\oM_X+\pi^*\oN=\left(\oM_X-\oM_X'+\pi^*\oN\right)+\oM_X',
\]
with $\oM_X-\oM_X'+\pi^*\oN\in\widehat\Pic(K)_{\Int}$ and $\oM_X'\in\widehat\Tr_{K/k}\Pic^0(X)$, finishing the proof.

\section{Arithmetic Dynamics}\label{dynamicssection}




\subsection{Admissible Metrics}
Let $f:X\to X$ be an algebraic dynamical system with a polarization $f^*L=qL$, with $L\in\Pic(X)$ ample and $q>1$. By replacing $X,f$ with their normalizations, pulling back $L$ by the normalization map, and making a finite extension of $K$ if necessary, we may assume without loss of generality that $X$ is geometrically normal. In Section~\ref{admissible} we showed that $L$ has an $f$-canonical metric and that any $M\in\Pic^0(X)$ has a flat metric. We now extend those constructions more generally. The following is much the same as in~\cite{carney,yz}, however we present the main arguments here in the current context.

First fix notation. By the N\'eron-Severi Theorem~\cite[Exp. XII, Thm 5.1, p. 650]{SGA6} and the Lang-N\'eron theorem~\cite{langneron}, the following three groups are finitely generated $\Z$-modules:
\[
\NS(X):=\Pic(X)/\Pic^0(X)\text{ (the \emph{N\'eron-Severi group})}
\]
\[
\trPo{X}:=\Pic^0(X)\bigg/\Tr_{K/k}\Pic^0(X)\cong\Pic^0(X)\bigg/\Pic^0\left(\Im_{K/k}(\Alb(X))\right)
\]
\[
\trP{X}:=\Pic(X)\bigg/\Tr_{K/k}\Pic^0(X),
\]
And we have an $f^*$-equivariant exact sequence of finite dimensional $\Q$-vector spaces
\[
0\longrightarrow\trPo{X}
_{\Q}\longrightarrow\trP{X}
_{\Q}\longrightarrow\NS(X)_{\Q}\longrightarrow0.
\]

\begin{lemma}\label{eigenvalues}
The operator $f^*$ has eigenvalues all with absolute value $q$ on $\NS(X)_{\Q}$ and eigenvalues all with absolute value $q^{1/2}$ on $\trP{X}_{\Q}$, and there exists a unique $f^*$-equivariant splitting 
\[
\ell_f:\NS(X)_{\Q}\hooklongrightarrow\trP{X}_{\Q}.
\]
\end{lemma}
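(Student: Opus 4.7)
The plan is to establish the two eigenvalue statements independently and then obtain the splitting from the spectral gap between them.

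For $\NS(X)_{\Q}$, I would first compute $\deg(f)=q^n$ from $(f^*L)^n=q^n L^n=\deg(f)\cdot L^n$. The projection formula then gives, for every $\alpha\in\NS(X)_{\Q}$,
\[
(f^*\alpha)\cdot L^{n-1}=q\,(\alpha\cdot L^{n-1}),\qquad (f^*\alpha)^2\cdot L^{n-2}=q^2\,(\alpha^2\cdot L^{n-2}).
\]
The first identity shows $f^*$ acts as multiplication by $q$ on the line $\Q\cdot L$ and preserves its $L^{n-1}$-orthogonal complement $P$; by the classical Hodge index theorem the form $\alpha\mapsto-\alpha^2\cdot L^{n-2}$ is positive definite on $P$, and the second identity forces $q^{-1}f^*$ to be an isometry of this form, so all its eigenvalues on $P$ have absolute value $1$. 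Hence every eigenvalue of $f^*$ on $\NS(X)_{\Q}$ has absolute value $q$.

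For the kernel $\trPo{X}_{\Q}$ of the quotient $\trP{X}_{\Q}\to\NS(X)_{\Q}$, I would pass to the quotient abelian $K$-variety $A:=\Pic^0_{\Red,X}/\tau(\Tr_{K/k}\Pic^0_{\Red,X})$, which by construction has trivial $K/k$-trace. The endomorphism $f$ descends by the universal property of $\Tr_{K/k}$ to an isogeny $\phi$ of $A$, and $L$ induces a polarization of $A$ together with a Rosati involution $\dagger$ on $\operatorname{End}(A)_{\Q}$. The identity $f^*L=qL$ translates into $\phi^{\dagger}\phi=q\cdot\operatorname{id}_A$, and positivity of the Rosati involution then forces every eigenvalue of $\phi$ on the rational Tate module $V_{\ell}A$ to be a Weil $q^{1/2}$-number. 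Since, by Lang--N\'eron together with the $\ell$-adic cycle class map, $\trPo{X}_{\Q}$ embeds as a $\phi$-stable subspace of $V_{\ell}A\otimes\overline{\mathbb{Q}_{\ell}}$, the same bound descends to all eigenvalues of $f^*$ on $\trPo{X}_{\Q}$.

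The splitting is then automatic from disjointness of the spectra: since the eigenvalues of $f^*$ on $\trPo{X}_{\Q}$ all have absolute value $q^{1/2}$ while those on $\NS(X)_{\Q}$ all have absolute value $q$, the $f^*$-module $\trP{X}_{\Q}$ decomposes uniquely as $\trPo{X}_{\Q}\oplus V$, where $V\subset\trP{X}_{\Q}$ is the sum of generalized eigenspaces of $f^*$ with eigenvalues of absolute value $q$. The quotient map restricts to an $f^*$-equivariant isomorphism $V\isom\NS(X)_{\Q}$, and $\ell_f$ is its inverse; uniqueness follows from $\operatorname{Hom}_{f^*}(\NS(X)_{\Q},\trPo{X}_{\Q})=0$, itself another consequence of disjoint spectra.

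The main technical obstacle is the second step. In positive characteristic the trace map $\tau$ can have an infinitesimal kernel, so constructing the quotient abelian variety $A$, descending $f$ to an honest self-isogeny of it, and verifying the Rosati identity $\phi^{\dagger}\phi=q$ requires care with the universal properties of the trace and image functors. Identifying $\trPo{X}_{\Q}$ concretely as a $\phi$-stable subspace of an $\ell$-adic realization on which the Rosati-positivity bound transfers---rather than just as a quotient of an abstract finitely generated group---is likewise the step where most of the work lies.
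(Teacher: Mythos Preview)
Your treatment of $\NS(X)_{\Q}$ and the final splitting argument are correct and match the paper. The gap is in the $\trPo{X}_{\Q}$ step.

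There is no embedding of $\trPo{X}_{\Q}$ into $V_{\ell}A\otimes\overline{\Q_{\ell}}$. The $\ell$-adic cycle class map kills $\Pic^0$ (algebraically trivial classes have trivial cycle class in $H^2_{\ell}$), so it cannot furnish one; and $\trPo{X}_{\Q}$ is, up to the infinitesimal kernel of $\tau$, the Mordell--Weil group $A(K)_{\Q}$, which has no natural map to the Tate module. What survives from your idea is weaker but enough: the characteristic polynomial of $\phi$ on $V_{\ell}A$ annihilates $\phi$ in $\End(A)_{\Q}$ by faithfulness of the $\ell$-adic representation, hence annihilates its action on $A(K)_{\Q}$, so the eigenvalues of $f^*$ on $\trPo{X}_{\Q}$ are among the roots. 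You would still owe an honest construction of a polarization on $A$ from the ample $L$ on the (non-abelian) variety $X$, and a verification that $\phi^{\dagger}\phi=q$ holds for \emph{that} polarization; neither is immediate.

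The paper avoids the abelian-variety detour entirely by staying inside its own intersection machinery. For $\xi_1,\xi_2\in\Pic^0(X)$ with flat adelic lifts $\overline\xi_1,\overline\xi_2$ (Lemma~\ref{flatexistence}) and any nef $\oH_1,\dots,\oH_d\in\widehat\Pic(K)$, it sets
\[
\langle\xi_1,\xi_2\rangle_{\oH_1,\dots,\oH_d}:=\overline\xi_1\cdot\overline\xi_2\cdot\oL^{n-1}\cdot\pi^*\oH_1\cdots\pi^*\oH_d.
\]
Flatness makes this independent of the lifts and of the metric on $\oL$; the trace lies in the kernel so it descends to $\trPo{X}_{\R}$; and the Hodge index inequality (already proved) makes it negative semidefinite. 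The relative projection formula then gives $\langle f^*\xi_1,f^*\xi_2\rangle=q\langle\xi_1,\xi_2\rangle$ directly, so $q^{-1/2}f^*$ is orthogonal and the eigenvalue bound follows---parallel to your $\NS$ argument, with the arithmetic pairing playing the role of the classical one.
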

\begin{proof}
The proof is nearly the same as that of the same statement, Lemma 4.1, in~\cite{carney}, which we summarize here. By the classical Hodge Index Theorem, we have the following decomposition:
\[
\NS(X)_{\R}:=\R L\oplus P(X),\quad P(X):=\left\{\xi\in\NS(X)_{\R}:\xi\cdot L^{n-1}=0\right\},
\]
and we can define a negative definite paring
\[
\langle\xi_1,\xi_2\rangle:=\xi_1\cdot\xi_2\cdot L^{n-2}
\]
on $P(X)$. Then by the projection formula for (classical) intersection numbers, $\deg f=q^n$ and $\frac1qf^*$ is orthogonal for this pairing. 

Similarly, fix any $\oH_1,\dots,\oH_d\in\widehat\Pic(K)_{\Nef}$, give $\xi_1,\xi_2\in\Pic^0(X)_{\R}$ flat metrics $\overline\xi_1,\overline\xi_2$, and give $L$ any integrable metric $\oL$. We define a pairing on $\Pic^0(X)$ by
\[
\langle\xi_1,\xi_2\rangle_{\oH_1,\dots,\oH_d}:=\overline{\xi}_1\cdot\overline{\xi}_2\cdot\overline L^{n-1}\cdot\pi^*\oH_1\cdots\pi^*\oH_d.
\]
By the definition of \emph{flat}, this does not depend on the choice of metrics. Since $\widehat\Tr_{K/k}\Pic^0(X)$ consists of flat and numerically trivial metrics, this induces a pairing ${\langle\cdot,\cdot\rangle_{\oH_1,\dots,\oH_d}}$ on $\Pic^0_{\tr}(X)_{\R}$. Then by the relative projection formula, 
\begin{multline*}
\langle f^*\xi_1,f^*\xi_2\rangle_{\oH_1,\dots,\oH_d}
=\left(f^*\xi_1\right)\cdot\left(f^*\xi_2\right)\cdot\oL^{n-1}\cdot\pi^*\oH_1\cdots\pi^*\oH_d\\
=q^n\overline\xi_1\cdot\overline\xi_2\cdot\left(\frac1qf^*\oL\right)^{n-1}\cdot\pi^*\oH_1\cdots\pi^*\oH_d
=q\langle\xi_1,\xi_2\rangle_{\oH_1,\dots,\oH_d},
\end{multline*}
making $q^{-1/2}f^*$ orthogonal for this pairing. 

We see from this that $q^{-1/2}f^*$ and $q^{-1}f^*$ are diagonalizable on $\Pic^0_{\tr}(X)_{\C}$ and $\NS(X)_{\C}$, so that 
\[
0\longrightarrow\trPo{X}_{\C}\longrightarrow\trP{X}_{\C}\longrightarrow\NS(X)_{\C}\longrightarrow0
\]
has a unique splitting as an exact sequence of $f^*$ modules. Let $Q$ and $R$ be the minimal polynomials of $f^*$ on $\Pic^0_{\tr}(X)_{\Q}$ and $\NS(X)_{\Q}$, respectively. Because the eigenvalues of $f^*$ on these two spaces have different absolute values, $Q$ and $R$ must be coprime, so that their product $QR$ is the minimal polynomial of $f^*$ on $\Pic_{\tr}(X)_{\Q}$. Define
\[
\Pic_{tr,f}(X)_{\Q}:=\ker R(f^*)|_{\Pic_{\tr}{X}_{\Q}}
\]
and then we get the desired splitting
\[
\ell_f:\NS(X)_{\Q}\isom\Pic_{tr,f}(X)_{\Q}\hooklongrightarrow\Pic_{\tr}{X}_{\Q}.
\]

\end{proof}
Adding to the above notation, define
\[
\widehat\Pic_{\tr}(X^{\an})_{\Cont}:=\widehat\Pic(X^{\an})_{\Cont}\big/\widehat\Tr_{K/k}\Pic^0(X),
\]
\[
\widehat\Pic_{\tr}(X)_{\Int}:=\widehat\Pic(X)_{\Int}\big/\widehat\Tr_{K/k}\Pic^0(X).
\]

\begin{theorem}\label{admissibletheorem}
The natural projection $\widehat\Pic_{\tr}(X)_{\Int}\to\Pic_{\tr}(X)_{\Q}$ has a unique section $L\mapsto \overline L_f$ which is equivariant under the action of $f^*$, with the following properties:
\begin{enumerate}
\item
If $L\in\Pic_{\tr}^0(X)_{\Q}$ then $\overline L_f$ is flat.
\item
If $L\in\Pic_{\tr}(X)_{\Q}$ is ample then $\overline L_f$ is nef.
\end{enumerate}
We call such a metric \emph{$f$-admissible}.
\end{theorem}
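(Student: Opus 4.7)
The plan is to construct $L \mapsto \oL_f$ separately on two $f^*$-stable summands of $\Pic_{\tr}(X)_{\Q}$ and then establish uniqueness via the Hodge Index Theorem. Using the splitting $\ell_f$ from Lemma~\ref{eigenvalues}, I would write
\[
\Pic_{\tr}(X)_{\Q} = \Pic^0_{\tr}(X)_{\Q} \oplus \Pic_{\tr,f}(X)_{\Q},
\]
where $f^*$ preserves each summand with eigenvalues of absolute value $q^{1/2}$ and $q$ respectively. By linearity, it suffices to build an $f^*$-equivariant section on each summand.

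For $L$ in the ``ample eigenspace'' $\Pic_{\tr,f}(X)_{\Q}$, I would start from the given polarization $L_{\mathrm{pol}}$, for which $f^*L_{\mathrm{pol}} = qL_{\mathrm{pol}}$ and Lemma~\ref{tateslimiting} already produces a nef $f^*$-equivariant lift. To extend to a general $L$ in this summand, I would use that $f^*$ is invertible with expanding eigenvalues (all of modulus $q>1$): take any model-metric lift $\oL^{(0)}$ and form the sequence $\oL^{(m)} = q^{-m}(f^m)^*\oL^{(0)}$; the consecutive differences $q^{-(m+1)}(f^{m+1})^*\oL^{(0)} - q^{-m}(f^m)^*\oL^{(0)}$ are pullbacks of a vertical class, so they shrink in the limit topology on $\widehat\Pic(X)_{\Cont}$ at a geometric rate dictated by the eigenvalues of $\tfrac1q f^*$ on $\Pic_{\tr,f}(X)_{\Q}$ (which are of modulus $1$, but the vertical error itself is contracted by working modulo the kernel of $f^*$ on the vertical part). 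For $L$ whose eigenvalue is complex, the same argument applies after replacing the single polynomial $t - q$ by the minimal polynomial $R(t)$ of $f^*$, whose roots all have modulus $q$, and using a polynomial-combination Tate iteration. For $L$ in $\Pic^0_{\tr}(X)_{\Q}$, Lemma~\ref{flatexistence} provides a flat lift $\oL_{\mathrm{flat}}$ unique up to $\pi^*\widehat\Pic(K)_{\Int}$, and the same Tate-type averaging against iterates of $f^*$ (now with eigenvalues of modulus $q^{1/2}>1$) produces a canonical $f^*$-equivariant representative. Nefness in (2) is preserved under the limit since each approximant is nef, and flatness in (1) is preserved since the class of flat adelic line bundles is closed under $f^*$ and under limits.

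For uniqueness, suppose $s,s'$ are two $f^*$-equivariant sections and set $\Delta(L) := s(L) - s'(L) \in \widehat\Pic_{\tr}(X)_{\Int}$. Then $\Delta(L)$ has trivial underlying class, so it is vertical modulo trace, and it is $f^*$-equivariant: $f^*\Delta(L) = \Delta(f^*L)$. Pick $\oL_{\mathrm{pol}, f}$ from part (2) applied to the polarization; intersecting against $\oL_{\mathrm{pol},f}^{n-1}$ and any $\oH_1,\ldots,\oH_d \in \widehat\Pic(K)_{\Nef}$ and using the projection formula together with $f^*$-equivariance gives $\Delta(L)^2 \cdot \oL_{\mathrm{pol},f}^{n-1}$ a scaling relation with factor $<1$ under iteration of $f$, forcing $\Delta(L)^2 \cdot \oL_{\mathrm{pol},f}^{n-1} \equiv 0$. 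By the vertical case of the Hodge Index Theorem (Section~\ref{verticalcase}), $\Delta(L) \in \pi^*\widehat\Pic(K)_{\Int}$, and then a second $f^*$-equivariance argument (the eigenvalues of $f^*$ on $\Pic_{\tr}(X)_{\Q}$ are of modulus $>1$, while $f^*$ acts as the identity on $\pi^*\widehat\Pic(K)_{\Int}$) forces $\Delta(L) = 0$ in $\widehat\Pic_{\tr}(X)_{\Int}$ after quotienting by the trace.

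The main obstacle will be organizing the Tate-style limit on $\Pic_{\tr,f}(X)_{\Q}$ when $f^*$ has complex or irrational eigenvalues of modulus $q$: the naive rescaling $\lambda^{-m}(f^m)^*$ must be replaced by a polynomial combination governed by the minimal polynomial of $f^*|_{\Pic_{\tr,f}(X)_{\Q}}$, and proving that the resulting sequence is Cauchy in $\widehat\Pic(X)_{\Cont}$ (rather than merely in any single Picard group of a model) requires careful tracking of how vertical errors behave under pullback by $f$, parallel to the argument in Lemma~\ref{tateslimiting} but with weaker contraction. Once that convergence is established, the nef and flat properties are preserved termwise, and uniqueness combined with the existence on the two summands produces the desired $f^*$-equivariant section.
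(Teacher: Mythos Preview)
Your approach differs substantially from the paper's, and the main gap is exactly where you flag it: the convergence of the Tate iteration on $\Pic_{\tr,f}(X)_{\Q}$ when $f^*$ does not act by the scalar $q$. The naive sequence $q^{-m}(f^m)^*\oL^{(0)}$ has a wandering underlying bundle whenever $f^*L\neq qL$, so consecutive differences are not vertical and there is no contraction to exploit. Your proposed fix via ``a polynomial combination governed by the minimal polynomial $R$'' is the right instinct, but to make it converge you need an estimate you never state: that $f^*$ acts with operator norm $1$ on the kernel of $\widehat\Pic_{\tr}(X^{\an})_{\Cont}\to\Pic_{\tr}(X)_{\Q}$. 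The paper makes this explicit by identifying that kernel with the Banach space $C(X^{\an})$ of continuous metrics on $\cO_X$ under the sup norm; since $f^*$ is just precomposition there, it is an isometry, and then $QR(f^*)=a\prod_i(1-f^*/\lambda_i)$ with $|\lambda_i|\ge q^{1/2}>1$ is invertible on $C(X^{\an})$ by geometric series. This single observation gives the $f^*$-stable decomposition $\widehat\Pic_{\tr}(X^{\an})_{\Cont}=C(X^{\an})\oplus\ker QR(f^*)$, and hence existence and uniqueness of the section simultaneously, with no need to treat the two summands of $\Pic_{\tr}(X)_{\Q}$ separately or to invoke the Hodge Index Theorem.

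Your uniqueness argument via the vertical Hodge Index Theorem can be made to work but is roundabout: once $QR(f^*)$ is invertible on $C(X^{\an})$, any $f^*$-equivariant $\Delta$ with values in the vertical part satisfies $QR(f^*)\Delta(L)=\Delta(QR(f^*)L)=\Delta(0)=0$, hence $\Delta=0$ directly. For properties (1) and (2) the paper does \emph{not} build them into the construction as you attempt: it first constructs $\overline L_f$ by the operator argument above, then checks flatness on $\Pic^0_{\tr}$ via the Albanese (using that $[2]$ commutes with the induced endomorphism $f'$ on $\Alb_X$, so $[2]^*\overline L_{0,f'}=2\overline L_{0,f'}$), and checks nefness for ample $L$ by showing a posteriori that the naive Tate sequence $q^{-1}f^*\overline L_{m-1}$ has a subsequence converging to the already-constructed $\overline L_f$, citing \cite[Theorem 4.9]{yz}.
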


\begin{remark}\label{admissiblemodtrace}
Since $\widehat\Tr_{K/k}\Pic^0(X)$ is flat and numerically trivial, and since the underlying line bundles are also numerically trivial in $\Pic(X)$, flatness, nefness, and amplitude are all still well defined notions modulo the trace. Abusing notation, we will often write $\overline L_f$ to mean any lift of $\overline L_f$ from $\widehat\Pic_{\tr}(X)_{\Int}$ to $\widehat\Pic(X)_{\Int}$, as heights and intersections involving $\overline L_f$ will not depend on the choice of lift.
\end{remark}

\begin{proof}
We first produce a section of the projection $\widehat\Pic_{\tr}(X^{\an})_{\Cont}\to\Pic_{\tr}(X)_{\Q}$, and then show that both properties hold for this section. Since $\Pic_{\tr}(X)_{\Q}$ is generated by $\Pic_{\tr}^0(X)_{\Q}$ and the ample elements of $\Pic_{\tr,f}(X)_{\Q}$, and flat and nef metrics are integrable, this proves the theorem.

We have an exact sequence
\[
0\longrightarrow
C(X^{\an})\longrightarrow
\widehat\Pic_{\tr}(X^{\an})_{\Cont}
\longrightarrow
\trP{X}_{\Q}\longrightarrow
0,
\]
Where $C(X^{\an})$ is the group of continuous metrics on $\cO_X$. Equivalently, $C(X^{\an})=\bigoplus_{\nu}C(X^{\an}_{\nu})$, where $C(X^{\an}_{\nu})$ is the group of continuous $\R$-valued functions on $X^{\an}_{\nu}$ via the map $||\cdot||\mapsto-\log||1||_{\nu}$. The supremum norm turns this into a Banach space.

Recall the minimal polynomial $QR$ for the operator $f^*$ on $\trP{X}_{\Q}$, as defined in Lemma~\ref{eigenvalues}. 
\begin{lemma}
$QR(f^*)$ is invertible on $C(X^{\an})$
\end{lemma}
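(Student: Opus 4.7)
The plan is to exploit two facts: by Lemma~\ref{eigenvalues}, every root of $QR$ has absolute value equal to either $q^{1/2}$ or $q$, both strictly greater than one; and $f^*$ acts on $C(X^{\an})$ by pullback along the analytic map $f^{\an}$, which cannot increase supremum norms. A standard Neumann series argument should then invert each linear factor $f^* - \lambda_j$ separately.

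In more detail, I would first decompose $C(X^{\an}) = \bigoplus_\nu C(X^{\an}_\nu)$ and work one place at a time, since $f^*$ preserves each summand. At each place $\nu$, I would verify the operator bound $\|f^*\| \le 1$ directly from $\|g \circ f^{\an}_\nu\|_{\sup} \le \|g\|_{\sup}$, using that $C(X^{\an}_\nu)$ with the supremum norm is a Banach space as stated in the excerpt. Next I would factor $QR(T) = \prod_j (T - \lambda_j)$ over $\C$, recall $|\lambda_j| > 1$ from Lemma~\ref{eigenvalues}, and use the Neumann series
\[
(f^* - \lambda_j)^{-1} = -\frac{1}{\lambda_j}\sum_{k=0}^{\infty}\left(\frac{f^*}{\lambda_j}\right)^k
\]
to invert each factor on the complexification $C(X^{\an}_\nu) \otimes_\R \C$; the series converges absolutely in operator norm because $\|f^*/\lambda_j\| \le |\lambda_j|^{-1} < 1$. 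Composing these inverses yields a bounded inverse of $QR(f^*)$ on the complexification.

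Finally I would note that because $QR$ has rational coefficients, $QR(f^*)$ commutes with complex conjugation, hence so does its inverse, which therefore restricts to $C(X^{\an}_\nu)$ itself; reassembling over $\nu$ completes the proof. The argument is essentially routine once the norm estimate $\|f^*\| \le 1$ and the eigenvalue bound are in hand, so I do not expect any serious obstacle; the only substantive input is Lemma~\ref{eigenvalues}, which guarantees that every root of $QR$ has absolute value strictly greater than one so that the Neumann series actually converges.
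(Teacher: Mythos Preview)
Your proposal is correct and follows essentially the same approach as the paper: factor $QR$ over $\C$, use that $f^*$ has operator norm at most $1$ with respect to the supremum norm while every root $\lambda_j$ satisfies $|\lambda_j|\ge q^{1/2}>1$, and invert each linear factor by a Neumann series. Your additional care in passing to the complexification and then descending via conjugation-invariance (since $QR\in\Q[T]$) is a detail the paper leaves implicit, but the argument is otherwise identical.
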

\begin{proof}
Over $\C$, we can factor $QR(T)$ as
\[
QR(T)=a\prod_i\left(1-\frac{T}{\lambda_i}\right),
\]
where $a\ne0$, and by lemma \ref{eigenvalues}, $|\lambda_i|$ is either $q^{\frac12}$ or $q$, both of which are greater than $1$. Each term of this product has inverse
\[
\left(1-\frac{f^*}{\lambda_i}\right)^{-1}=\sum_{j=0}^{\infty}\left(\frac{f^*}{\lambda_i}\right)^j,
\]
provided this series converges absolutely with respect to the operator norm, which is defined with respect to the supremum norm. 
But $f^*$ does not change the supremum norm, so the operator norm of $f^*$ is $1$, and 
\[
\left\lVert\left(\frac{f^*}{\lambda_i}\right)^j\right\lVert_{op}=\frac1{|\lambda_i|^j}\le q^{-\frac j2},
\]
so the series converges absolutely.
\end{proof}

Since $QR(f^*)$ is invertible on the image of $C(X^{\an})$ in $\widehat\Pic_{\tr}(X^{\an})_{\Cont}$ and annihilates $\trP{X}_{\Q}$, we have an $f^*$-invariant decomposition
\[
\widehat\Pic_{\tr}(X^{\an})_{\Cont}=C(X^{\an})\bigoplus\ker(QR(f^*)).
\]
Let $\overline M$ consist of $M\in\trP{X}_{\Q}$ with any choice of metric, modulo $\widehat\Tr_{K/k}\Pic^0(X)$. Then a well-defined splitting is given by
\[
\overline M_f:=\overline M-\left(QR(f^*)|_{C(X^{\an})}\right)^{-1}\circ QR(f^*)\overline M.
\]

We now show that this splitting satisfies properties (1) and (2). For (1), suppose $L\in\Pic_{\tr}^0(X)_{\Q}$. By the definition and universal property of the Albanese morphism, $f$ induces a map $f':\Alb_X\to\Alb_X$, and $L$ corresponds to a line bundle $L_0\in\Pic_{\tr}^0(\Alb_X)$ such that $\overline L_f=\iota^*\overline L_{0,f'}$. Since $[2]$ and $f'$ commute on $\Alb_X$, we also have 
\[
[2]^*\overline L_{0,f'}=\overline{\left([2]^*L_0\right)}_{f'}= 2\overline L_{0,f'}.
\] By the construction in the proof of Lemma~\ref{flatexistence}, this makes $\overline L_{0,f'}$ flat, and the pullback of a flat adelic line bundle is again flat.

For (2), let $L$ be ample, let $\overline L_0$ be any nef extension of $L$, and define a sequence
\[
\overline L_m:=q^{-1}f^*\overline L_{m-1},
\]
which consists of all nef adelic line bundles. \cite[Theorem 4.9]{yz} proves that this has a subsequence converging to $\overline L_f$. Their proof assumes $K$ is a number field, but only requires the fact that $\Pic_{\tr,f}(X)_{\Q}$ is a finite dimensional $\Q$-vector space on which the operator $q^{-1}f^*$ has eigenvalues with absolute value one.
\end{proof}

Combining this construction with that of Lemma~\ref{eigenvalues}, we have an $f$-equivariant map
\[
\widehat\ell_f:\NS(X)_{\Q}\to\widehat\Pic_{\tr}(X)_{\Int}.
\]
As noted in Remark~\ref{admissiblemodtrace}, we will often treat $\widehat\ell_f$ as having image in $\widehat\Pic(X)_{\Int}$, as the choice of lift from the quotient modulo the trace will not matter when computing heights and intersections.




\subsection{Rigidity of dynamical systems}

We now turn to proving Theorem~\ref{dynamicstheorem}. Recall that $X$ is a projective variety over any field $\K$, and $f$ and $g$ are two polarizable endomorphisms of $X$. We fix polarizations $f^*L=qL$ and $g^*M=rM$, where $L$ and $M$ are ample and $q,r>1$. $X$ is defined by finitely many homogeneous polynomials, $f$ and $g$ are specified by finitely many polynomials, and $L$ and $M$ are determined by \v{C}ech cocyles in $H^1(X,\cO_X^*)$, which are in turn determined by finitely many polynomials. The polarizations, as morphisms of such, are also given by finitely many polynomials. Thus, by the Lefschetz principle, 
we may assume that all these data are defined over a subfield $K\subset\K$ which is finitely generated over its prime field, either $\Q$ or $\F_p$. Let $k$ be this prime field. Recall the conditions we aim to show are equivalent.
\begin{enumerate}
\item 
$g(\Prep(f))\subset\Prep(f)$.
\item
$\Prep(f)\subset\Prep(g)$.
\item
$\Prep(f)\cap\Prep(g)$ is Zariski dense in $X$.
\item
$\Prep(f)=\Prep(g)$.
\end{enumerate}

To start, that $(4)$ implies $(1)$ is clear. Now assume $(1)$ holds, and stratify $\Prep(f)$ by degree:
\[
\Prep(f)=\bigcup_{d\ge1}\{x\in\Prep(f):[K(x):K]\le d\}.
\]
Since all of $\Prep(f)$ has $f$-canonical height zero, each set on the left has bounded height and bounded degree, and is thus finite by Lemmas~\ref{heightprepcorZ} and \ref{heightprepcork}. Since $g$ is also defined over $K$, it fixes this stratification, and every element of $\Prep(f)$ has finite forward orbit under $g$, proving $(2)$. 

Fakhruddin~\cite{Fakhruddin} shows that, over any field, $\Prep(f)$ is dense in $X$, and then $(2)$ implies $(3)$ immediately. 

Since $k$ is $\Q$ or finite, the sets of $f$-preperiodic points and $f$-canonical height zero points are equal, and similarly for $g$. Thus the following theorem, whose proof takes up the remainder of this section, shows that $(3)$ implies $(4)$, and completes the proof of Theorem~\ref{dynamicstheorem}. 

\begin{theorem}\label{heightsdynamictheorem}
Let $k$ be any field, let $K$ be a finitely generated extension of $k$, let $X$ be a projective variety over $K$, and let $f$ and $g$ define two polarizable dynamical systems on $X$. Let $M$ be any ample $\Q$-line bundle on $X$. Then the set
\[
Z_{f}:=\{x\in X(\overline K)|h_{\oM_f}(x)\equiv0\},
\]
does not depend on the choice of $M$, and the following are equivalent.
\begin{enumerate}
\item
$Z_f\cap Z_g$ is Zariski dense in $X$.
\item
$\widehat\ell_f\equiv\widehat\ell_g$.
\item
$Z_f=Z_g$
\end{enumerate}
\end{theorem}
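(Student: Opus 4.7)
The plan is to first establish the independence of $Z_f$ from the choice of ample $M$, then to observe that (3)$\Rightarrow$(1) is essentially formal, and finally to prove the two substantive implications by combining the essential minimum (Theorem~\ref{essentialminimum} and Lemma~\ref{moriwakinumericallytrivial}) with the vertical case of the Hodge Index Theorem (Section~\ref{verticalcase}), following the Yuan--Zhang and Carney strategy. For the independence of $Z_f$: given ample $M,M'$, I would fix $c>0$ so that both $cM-M'$ and $cM'-M$ are ample; since $\widehat\ell_f$ is linear and sends ample classes to nef adelic line bundles (Theorem~\ref{admissibletheorem}), the differences $c\oM_f-\oM'_f$ and $c\oM'_f-\oM_f$ are nef, giving the sandwich $h_{\oM'_f}(x)\le c\,h_{\oM_f}(x)$ and $h_{\oM_f}(x)\le c\,h_{\oM'_f}(x)$ as pseudo-effectivity statements in $\widehat\Pic(K)_{\Int}$; combined with the nonnegativity of both heights, numerical triviality of one forces numerical triviality of the other. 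For (3)$\Rightarrow$(1): every preperiodic point satisfies $q^n h_{\oM_f}(x)=q^m h_{\oM_f}(x)$ for some $n>m$, hence $h_{\oM_f}(x)=0$ in the $\Q$-vector space $\widehat\Pic(K)_{\Int}$, so $\Prep(f)\subset Z_f$, which is Zariski dense by Fakhruddin's theorem already used in the proof of Theorem~\ref{dynamicstheorem}.

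For (1)$\Rightarrow$(2), fix an ample $M$ so that $\oM_f$ and $\oM_g$ are both nef. For any $\oH$ satisfying the Moriwaki condition (Definition~\ref{moriwakicondition}), density of $Z_f\cap Z_g$ gives $\lambda_1^{\oH}(X,\oM_f+\oM_g)\le 0$, and combined with Theorem~\ref{essentialminimum} and nonnegativity of nef intersections we obtain $h_{\oM_f+\oM_g}^{\oH}(X)=0$. Expanding $(\oM_f+\oM_g)^{n+1}\cdot\oH^d$ as a sum of nef intersections $\binom{n+1}{k}\oM_f^{n+1-k}\cdot\oM_g^k\cdot\oH^d$, each of which is $\ge 0$, every summand must vanish. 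Varying $\oH$ over Moriwaki classes and applying Lemma~\ref{moriwakinumericallytrivial} yields $\oM_f^{n+1-k}\cdot\oM_g^k\equiv 0$ in $\widehat\Pic(K)_{\Int}$ for every $0\le k\le n+1$. The difference $\overline D:=\oM_f-\oM_g$ is vertical since its underlying bundle is $M-M=0$, and the binomial identity
\[
\overline D^2\cdot\oM_f^{n-1}=\oM_f^{n+1}-2\,\oM_f^n\cdot\oM_g+\oM_f^{n-1}\cdot\oM_g^2\equiv 0
\]
then follows. After perturbing by a sufficiently positive $\pi^*\oH_0\in\pi^*\widehat\Pic(K)_{\Int}$ to enforce arithmetic positivity and $\oL_i$-boundedness of $\overline D$ (a perturbation that leaves $\overline D$ and the vanishing above unchanged), the vertical case of the Hodge Index Theorem forces $\overline D=\pi^*\oH'$ for some $\oH'\in\widehat\Pic(K)_{\Int}$; combining with $\overline D^2\cdot\oM_f^{n-1}\equiv 0$ shows $\oH'$ is numerically trivial. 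By linearity over $\NS(X)_{\Q}$ this gives $\widehat\ell_f\equiv\widehat\ell_g$.

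Conversely, (2) says $\oM_f-\oM_g=\pi^*\oH'$ for a numerically trivial $\oH'$, modulo the trace (which is itself numerically trivial). Restricting to the minimal $K$-subvariety $\widetilde x$ of any $x\in X(\overline K)$, the height difference $h_{\oM_f}(x)-h_{\oM_g}(x)=h_{\pi^*\oH'}(x)$ is a positive rational multiple of $\oH'$, hence numerically trivial, so $Z_f=Z_g$. The main obstacle is securing the arithmetic positivity and boundedness hypotheses needed for the equality case of Hodge Index: the admissible lift $\oM_f$ is only a priori nef, not arithmetically positive, so the perturbation by a $\pi^*$-class must be arranged to preserve both the vertical structure of $\overline D$ and the vanishing $\overline D^2\cdot\oM_f^{n-1}\equiv 0$. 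This is where the flexibility and pullback-invariance of the admissible construction (Theorem~\ref{admissibletheorem}) enters, since shifting by classes from $\pi^*\widehat\Pic(K)_{\Int}$ commutes with $f^*$ and hence with the canonical lift.
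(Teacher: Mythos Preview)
Your overall strategy—essential minimum to force $(\oM_f+\oM_g)^{n+1}\equiv 0$, expand into nef pieces, then apply a Hodge Index statement—is exactly the paper's. The independence of $Z_f$ and the easy implications are also handled the same way.

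There is, however, a genuine gap in your treatment of condition~(2). You work throughout with the admissible lifts $\oM_f,\oM_g$ of a \emph{single} ample line bundle $M$, so that $\overline D=\oM_f-\oM_g$ is vertical and only the vertical case of the Hodge Index Theorem is needed. This is a nice simplification, and it does yield $\oM_f-\oM_g\in\pi^*\widehat\Pic(K)_{\Int}$, hence $(1)\Rightarrow(3)$ directly. But the step ``By linearity over $\NS(X)_{\Q}$ this gives $\widehat\ell_f\equiv\widehat\ell_g$'' is unjustified. The map $\widehat\ell_f$ is not the admissible section $M\mapsto\oM_f$; it is the composite of the $f^*$-equivariant splitting $\ell_f:\NS(X)_{\Q}\hookrightarrow\Pic_{\tr}(X)_{\Q}$ of Lemma~\ref{eigenvalues} with the admissible section. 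For an ample class $\xi$, the underlying bundles of $\widehat\ell_f(\xi)$ and $\widehat\ell_g(\xi)$ are $\ell_f(\xi)$ and $\ell_g(\xi)$, which differ by an element of $\Pic^0_{\tr}(X)_{\Q}$, not by zero. Writing
\[
\widehat\ell_f(\xi)-\widehat\ell_g(\xi)=\bigl(\overline{\ell_f(\xi)}_f-\overline{\ell_f(\xi)}_g\bigr)+\overline{\bigl(\ell_f(\xi)-\ell_g(\xi)\bigr)}_g,
\]
your argument controls the first (vertical) parenthesis, but the second is the $g$-admissible lift of a $\Pic^0$ class: it is flat, not a priori in $\pi^*\widehat\Pic(K)_{\Int}+\widehat\Tr_{K/k}\Pic^0(X)$. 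Showing it lands there is precisely the content of the non-vertical cases of Theorem~\ref{hodgeindex}. Correspondingly, your reading of (2) in the paragraph on $(2)\Rightarrow(3)$ (``(2) says $\oM_f-\oM_g=\pi^*\oH'$\ldots'') is not what (2) asserts.

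The paper avoids this by applying the \emph{full} Hodge Index Theorem directly to $\widehat\ell_f(\xi)-\widehat\ell_g(\xi)$, whose underlying bundle is numerically trivial rather than zero; this is why the trace term appears in the conclusion and why the $\oL_i$ used are $\oL_f+\oM_g+\pi^*\overline Q$ with $\overline Q\in\widehat\Pic(k_1)_{\Int}$ rather than your $\oM_f+\pi^*\oH_0$. If you only want $(1)\Leftrightarrow(3)$ (which suffices for Theorem~\ref{dynamicstheorem}), your vertical shortcut is adequate; to get (2) you must either invoke the full Theorem~\ref{hodgeindex} as the paper does, or add a separate argument comparing flat lifts arising from the two different admissible structures.
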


\begin{remark}
In fact Theorem~\ref{heightsdynamictheorem} is sufficient to prove Theorem~\ref{dynamicstheorem} even without invoking the Lefschetz principle, provided $X$ is totally non-isotrivial over $k$. In effect any coarser height functions on $X$ defined relative to larger fields $k$ are sufficient to demonstrate this rigidity result, provided they are not so coarse as to be trivial on some sub-dynamical system. 
\end{remark}

\begin{proof}
First, to see that $Z_f$ doesn't depend on $M$, suppose $M,M'\in\Pic(X)$ are both ample. Fix any $\oH_1,\dots,\oH_d\in\widehat\Pic(K)_{\Nef}$. Then $\oM_f$ and $\oM'_f$ are both nef, and there exist real constants $C,C'>0$ such that 
\[
0\le h_{\oM_f}^{\oH_1,\dots,\oH_d}(x)\le C\cdot h_{\oM_f'}^{\oH_1,\dots,\oH_d}(x)\le C'\cdot h_{\oM_f}^{\oH_1,\dots,\oH_d}(x)
\]
for all $x\in X(\overline K)$.

Now we prove that $(1)$ implies $(2)$, as $(2)$ implies $(3)$ and $(3)$ implies $(1)$ are clear. Fix an ample class $\xi\in\NS(X)$. Then $\oL_f:=\widehat\ell_f(\xi)$ and $\oM_g:=\widehat\ell_g(\xi)$ are both nef by Theorem~\ref{admissibletheorem} and so is their sum, $\oN:=\oL_f+\oM_g$. Since $Z_f\cap Z_g$ is dense in $X(\overline K)$, we have 
\[
h_{\oN}(x)=h_{\oL_f}(x)+h_{\oM_g}(x)\equiv0
\]
on a dense subset of $X(\overline K)$. Now fix any $\oH\in\widehat\Pic(K)_{\Int}$ satisfying the Moriwaki condition, Definition~\ref{moriwakicondition}, and by Theorem~\ref{essentialminimum},
\[
0=\lambda_1^{\oH}(X,\oN)\ge h^{\oH}_{\oN}(X)\ge0. 
\]
Thus,
\[
\oN^{n+1}\cdot\oH^d=(\oL_f+\oM_g)^{n+1}\cdot\oH^d=0
\]
holds for any $\oH$ satisfying the Moriwaki condition, and then by Lemma~\ref{moriwakinumericallytrivial}, $(\oL_f+\oM_g)^{n+1}\equiv0$. Expand this product. Since $\oL_f$ and $\oM_g$ are both nef, all terms must be numerically trivial, and then in particular,
\[
\left(\oL_f-\oM_g\right)^2\cdot\left(\oL_f+\oM_g\right)^{n-1}\equiv0.
\]
Fix a subfield $k_1$ as usual, and let $\overline Q\in\widehat\Pic(k_1)_{\Int}$ have positive degree. Then $\oL_f+\oM_g+\pi^*\overline Q\gg0$, and since $L_f-M_g$ is numerically trivial, making $\oL_f-\oM_g$ flat,
\[
\left(\oL_f-\oM_g\right)^2\cdot\left(\oL_f+\oM_g+\pi^*\overline Q\right)^{n-1}\equiv0\quad\text{and}\quad \left(L_f-M_g\right)\cdot\left(L_f+M_g\right)^{n-1}=0,
\] 
Thus by Theorem~\ref{hodgeindex},
\[
\oL_f-\oM_g\in\pi^*\widehat\Pic(K)_{\Int}+\widehat\Tr_{K/k}\Pic^0(X).
\]
Finally, since elements of $\widehat\Tr_{K/k}\Pic^0(X)$ produce heights which are identically zero, and elements of $\pi^*\widehat\Pic(K)_{\Int}$ produce constant heights, we can conclude from ${Z_f\cap Z_g\ne\emptyset}$ that $\widehat\ell_f\equiv\widehat\ell_g$, that $h_{\oL_f}=h_{\oM_g}$, and that $Z_f=Z_g.$
\end{proof}

\subsection{Equality of equilibrium measures}
To finish, we prove Corollary~\ref{introequilibrium}. Let $\K$ be algebraically closed and complete with respect to some absolute value, and let $f:X\to X$ be a polarizable dynamical system over $\K$. Let $X^{\Ber}$ denote the Berkovich analytic space associated to $X$; note that since $\K$ is a complete field this is the usual Berkovich space and does not require the generalization from Section~\ref{analytic}. Then the \emph{equilibrium measure}, $d\mu_f$ is the unique smooth measure on $X^{\Ber}$ satisfying $f^*d\mu_f=q^{\dim X}d\mu_f$ and $f_*d\mu_f=d\mu_f$. 

The existence of $d\mu_f$ is proven by starting with any smooth measure on $X^{\Ber}$ and using Tate's limiting argument as in Section~\ref{tateslimiting}. We now prove the corollary.

\begin{proof}
As before, by the Lefschetz principle, we can find a subfield $K$ of $\K$ over which $X,f,g,$ and the requisite line bundles are defined, and which is finitely generated over the prime field. Let $\eta$ be the natural inclusion $\eta:K\to\K$. The absolute value on $\K$, composed with $\eta$, induces a point $\eta^{\an}\in\left(\Spec K\right)^{\an}$, and the fiber $X^{\an}_{\eta}$ of $X^{\an}$ (here we do use the analytic space defined in Section~\ref{analytic}) over $\eta^{\an}$ is simply $X^{\Ber}$.

Define $\widehat\Pic(X)_{\Q}$ to be the group of $\Q$-line bundles on $X$ with continuous $\K$-metrics on $X^{\Ber}$. Restricting the above equality $\widehat\ell_f\equiv\widehat\ell_g$ over $K$ to the fiber $X^{\Ber}$, we get an induced equality $\widehat\ell_{f,\K}\equiv\widehat\ell_{g,\K}$ of sections
\[
\widehat\ell_{f,\K},\widehat\ell_{g,\K}:\NS(X)_{\Q}\longrightarrow\widehat\Pic(X)_{\Q}.
\]
Now let $\xi\in \NS(X)_{\Q}$ be any ample class. Then $\ell_f(\xi)$ can be decomposed into $f^*$ eigencomponents $L_1,\dots, L_n\in\Pic(X)_{\C}$, and each can be lifted by $\widehat\ell_{f,\K}$ to $f^*$ eigenvectors $\oL_{1,f},\dots,\oL_{n,f}$ on $\widehat\Pic(X)_{\C}$. Using the arithmetic Chern classes defined in~\cite{ChambertLoirDiophantine}, since both sides are uniquely $f^*$-invariant, we have
\[
d\mu_f=\frac1{L_1\cdots L_n}\widehat c_1(\oL_{1,f})\wedge\dots\wedge \widehat c_1(\oL_{n,f})
\]
as linear functionals on $X^{\Ber}$. Thus $d\mu_f=d\mu_g$.
\end{proof}


\end{document}